\documentclass[11pt, fleqn]{amsart}
\usepackage{amscd, amsfonts, amsthm, graphicx, amssymb}
\usepackage[all]{xy}
\textwidth	480pt
\textheight	660pt
\oddsidemargin	-10pt
\evensidemargin	-10pt
\topmargin	-20pt

\pagestyle{plain}
\title{Topological $\big(\prod^\omega \ell_2, \sum^\omega \ell_2\big)$-factors of diffeomorphism groups \\ 
of non-compact manifolds} 
\author{Tatsuhiko Yagasaki}
\subjclass[2000]{57S05, 58D05}
\keywords{Diffeomorphism groups, Homeomorphism groups, Stability, 2-manifold, $\sigma$-compact manifold} 
\address{Division of Mathematics,  Graduate School of Science and Technology, 
Kyoto Institute of Technology, 
Matsugasaki, Sakyoku, Kyoto 606-8585, Japan}
\email{yagasaki@kit.ac.jp}

\newtheorem{thm}{Theorem}[section]
\newtheorem{prop}{Proposition}[section] 
\newtheorem{cor}{Corollary}[section] 
\newtheorem{lemma}{Lemma}[section]

\theoremstyle{definition}
\newtheorem{defi}{Definition}[section]
\newtheorem{remark}{Remark}[section]

\newtheorem{example}{Example}[section]

\newtheorem*{claim}{Claim}

\newtheorem{assumption}{Assumption}[section] 
\newtheorem{compl}{Complement}[section] 
\newtheorem*{remark-notation}{Remark \& Notations}

\def \cal {\mathcal}
\def \phi {\varphi}
\def \ds {\displaystyle}

\def \lra {\longrightarrow}

\def \e {\varepsilon}
 
\def \id {{\rm id}} 
\def \ssp {\mbox{\tiny $\#$}}

\begin{document}
\baselineskip 6 mm

\thispagestyle{empty}

\maketitle

% Abstract %

\begin{abstract} 
Suppose $M$ is a non-compact connected smooth $n$-manifold. 
Let ${\cal D}(M)$ denote the group of diffeomorphisms of $M$ 
endowed with the compact-open $C^\infty$-topology and 
${\cal D}^c(M)$ denote the subgroup consisting of diffeomorphisms of $M$ with compact support. 
Let ${\cal D}(M)_0$ and ${\cal D}^c(M)_0$ be the connected components of $\id_M$ in ${\cal D}(M)$ and ${\cal D}^c(M)$ respectively. 
In this paper we show that the pair $({\cal D}(M), {\cal D}^c(M))$ admits  
a topological $\big(\prod^\omega \ell_2, \sum^\omega \ell_2\big)$-factor. 
In the case $n = 2$, 
this enables us to apply the characterization of $\big(\prod^\omega \ell_2, \sum^\omega \ell_2\big)$-manifolds and 
show that the pair $({\cal D}(M)_0, {\cal D}^c(M)_0)$ is 
a $\big(\prod^\omega \ell_2, \sum^\omega \ell_2\big)$-manifold 
and determine its topological type. 
We also obtain a  similar result for groups of homeomorphisms of non-compact topological 2-manifolds. 
\end{abstract}

% Section 1 %

\section{Introduction}

This article is a continuation of study of topological properties of 
groups of homeomorphisms and diffeomorphisms of non-compact manifolds with the compact-open ($C^\infty$-)\,topology \cite{Ya2, Ya3, Ya4, Ya5}. 

Suppose $G$ is a transformation group acting on a space $M$ continuously and effectively. 
Each $g \in G$ induces a homeomorphism $\hat{g}$ of $M$. 
When $M$ is non-compact, the group $G$ contains the normal subgroup $G_c$  consisting of $g \in G$ such that $\hat{g}$ has a compact support. 
Let $G_0$ and $(G_c)_0$ denote the connected components of the unit element $e$ in $G$ and $G_c$ respectively. 
In this paper we are concerned with the topological type of the pair $(G_0, (G_c)_0)$  in the case where $G$ has a weak topology. 

Typical examples of the transformation groups $G$ are the group ${\cal H}(M)$ of homeomorphisms of  a topological manifold (or a locally compact polyhedron) $M$  
endowed with the compact-open topology and 
the group ${\cal D}(M)$ of diffeomorphisms of a smooth manifold $M$ 
endowed with the compact-open $C^\infty$-topology. 
In \cite{BMS} and \cite{BY} it is shown that both the pairs $({\cal H}(M)_0, {\cal H}^c(M)_0)$ for any countable infinite locally finite connected graph $M$ and $({\cal D}({\Bbb R})_0, {\cal D}^c({\Bbb R})_0)$ for the real line ${\Bbb R}$ are homeomorphic to the pair $\big(\prod^\omega \ell_2, \sum^\omega \ell_2\big)$. 
Here $\prod^\omega \ell_2$ is the countable product of the separable Hilbert space $\ell_2$ and $\sum^\omega \ell_2$ is the  countable weak product of $\ell_2$ defined by 
$$\mbox{$\sum^\omega \ell_2 = \big\{ (x_i)_i \in \prod^\omega \ell_2 \mid x_i = 0$ except finitely many $i\,\big\}$.}$$ 
In this paper, we show that the pairs $({\cal H}(M)_0, {\cal H}^c(M)_0)$ and $({\cal D}(M)_0, {\cal D}^c(M)_0)$ for a non-compact 2-manifold $M$ are $\big(\prod^\omega \ell_2, \sum^\omega \ell_2\big)$-manifolds (Theorem~\ref{thm_main}) and determine their topological types from their homotopy types (Corollary~\ref{cor_top-type}).  

To establish these results, first we deduce a characterization of 
$(\prod^\omega \ell_2, \sum^\omega \ell_2)$-manifolds 
under the stability property (Theorem~\ref{thm_(s,sf)}) 
from a general criterion \cite[Theorem 2.9]{Ya1}. 
A pair $(X, A)$ is said to be $(\prod^\omega \ell_2, \sum^\omega \ell_2)$-stable if $(X \times \prod^\omega \ell_2, A \times \sum^\omega \ell_2) \cong (X, A)$. Stability properties of homeomorphism groups of topological manifolds and their subgroups have already been studied by many authors (cf. \cite{Ge1, Ge2, Ke, KW, SW} etc). 
In particular, in \cite{Ya3} we have treated the non-compact case in detail. 
On the other hand, the Moser's theorem for volume forms \cite{Mos} (cf. \cite{Ya4})  exhibits the $\ell_2$-stability property of diffeomorphism groups. 
We modify these arguments and show the $(\prod^\omega \ell_2, \sum^\omega \ell_2)$-stability property of the pairs $({\cal H}(M), {\cal H}^c(M))$ and 
$({\cal D}(M), {\cal D}^c(M))$ for non-compact (separable metrizable) $n$-manifolds $M$. 

\begin{thm}\label{thm_stability} 
The pair $(G, G_c)$ is $\big(\prod^\omega \ell_2, \sum^\omega \ell_2\big)$-stable in the following cases: 
\begin{enumerate} 
\item $G = {\cal D}(M)$ for a non-compact smooth $n$-manifold $M$ 
possibly with boundary $(n \geq 1)$.
\item $G = {\cal H}(M)$ for a non-compact topological $n$-manifold $M$ 
possibly with boundary $(n \geq 1)$. 
\item $G = {\cal H}(M, \mu)$ for a non-compact topological $n$-manifold $M$ possibly with boundary $(n \geq 2)$ and a good Radon measure $\mu$ on $M$. 
\end{enumerate} 
\end{thm}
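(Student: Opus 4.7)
The plan is to exploit the non-compactness of $M$ by producing countably many ``independent'' $\ell_2$-factors of $G$ with disjoint, locally finite supports, and then assembling them into the desired stability isomorphism for the pair $(G, G_c)$. I would treat the three cases in parallel, using in each case only an abstract notion of support for elements of $G$ together with the single-ball $\ell_2$-stability already available in the literature.

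First, using $\sigma$-compactness of $M$, I would choose a sequence of pairwise disjoint compact $n$-disks $B_1, B_2, \ldots$ in the interior of $M$ forming a locally finite family that leaves every compact set; in case (3) one additionally arranges that each $B_i$ has finite positive $\mu$-measure with good boundary, so the Oxtoby--Ulam/Moser machinery applies inside $B_i$. Non-compactness of $M$ is exactly what makes such a locally finite sequence available. Next, for each $i$ I would invoke a single-ball $\ell_2$-stability: a continuous embedding $\phi_i : \ell_2 \hookrightarrow G$ whose image consists of elements of $G$ supported in $B_i$, together with a homeomorphism $G \cong G \times \ell_2$ in which the $\ell_2$-factor is realized by $\phi_i$. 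In case (1) this comes from Moser's theorem applied to volume forms supported in $B_i$ (cf.\ \cite{Mos, Ya4}); in case (2), the construction in \cite{Ya3} supplies such an embedding via reparametrizations; in case (3) one uses the measure-preserving analogue.

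Because the $B_i$ are pairwise disjoint and locally finite, the supports of the $\phi_i(x_i)$ are disjoint and locally finite, so the infinite product
\[
\Phi(g, (x_i)) \;=\; g \cdot \prod_{i \geq 1} \phi_i(x_i)
\]
converges in the compact-open ($C^\infty$-)topology and defines a continuous map $\Phi : G \times \prod^\omega \ell_2 \to G$. An element $\prod_i \phi_i(x_i)$ lies in $G_c$ precisely when $x_i = 0$ for all but finitely many $i$, so $\Phi$ maps $G_c \times \sum^\omega \ell_2$ into $G_c$, with surjectivity onto $G_c$ checked via the single-ball inverses. A continuous inverse of $\Phi$ is built by applying the inverse of each individual single-ball stability on a small neighborhood of $B_i$; local finiteness of $\{B_i\}$ glues the local pieces into a continuous global inverse respecting the $G_c$/$\sum^\omega \ell_2$ decomposition.

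The main obstacle will be the verification that $\Phi$ is a homeomorphism of pairs---in particular, continuity of both the infinite composition and of its inverse in the strong ($C^\infty$ or compact-open) topology. This requires that the individual single-ball stabilities be implemented so that the factor $\phi_i(\ell_2)$ really is supported in $B_i$, and so that the coordinate $x_i$ can be continuously recovered from an element of $G$ by reading off its behavior on a neighborhood of $B_i$. In the diffeomorphism setting, $C^\infty$-convergence of an infinite composition is governed precisely by the local-finiteness condition on $\{B_i\}$; in case (3) this is combined with the Oxtoby--Ulam absorption refined in \cite{Ya3, Ya4}. Once these continuity checks are in place, the theorem follows by packaging the three single-ball stabilities through the map $\Phi$.
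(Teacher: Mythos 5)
Your overall strategy --- splitting off countably many $\ell_2$-directions supported in a locally finite family of disjoint disks, one $\ell_2$ per disk, and then using the self-stability of $\bigl(\prod^\omega\ell_2,\sum^\omega\ell_2\bigr)$ --- is indeed the paper's strategy. But the map you actually write down cannot be a homeomorphism. For a fixed sequence $(x_i)$, right multiplication $g\mapsto g\cdot\prod_i\phi_i(x_i)$ is already a bijection of $G$ onto itself, so your map $\Phi: G\times\prod^\omega\ell_2\to G$, $\Phi(g,(x_i))=g\cdot\prod_i\phi_i(x_i)$, is surjective on every slice $G\times\{(x_i)\}$ and hence massively non-injective: for any $(x_i')$ one solves $g'=g\cdot\prod_i\phi_i(x_i)\cdot\bigl(\prod_i\phi_i(x_i')\bigr)^{-1}$ and gets $\Phi(g',(x_i'))=\Phi(g,(x_i))$. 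Correspondingly, your proposed inverse --- ``read off $x_i$ from the behavior of $h$ near $B_i$'' --- cannot exist as stated, because in the product $g\cdot\prod_i\phi_i(x_i)$ the factor $g$ is arbitrary and also acts near $B_i$, so the restriction of $h$ to a neighborhood of $B_i$ does not determine $x_i$.

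The repair is to shrink the first factor from $G$ to a slice, which is where the real work lies. The paper constructs, for each disk, a continuous coordinate map $P_i: G\to B_i$ with $B_i\cong\ell_2$ (in case (1), $P_i(f)$ is the normalized pulled-back volume density $\overline{(f|_{D_i})^{\ast}\omega}$ on a subdisk $D_i$, and the section $G_i: B_i\to G$ realizing each coordinate comes from a parametrized Moser theorem; in cases (2) and (3), $P_i$ records Morse $\mu$-length midpoints of the images of arcs $A_i\subset E_i$, and the section is corrected to be measure-preserving via the Oxtoby--Ulam/Fathi selection theorem). The compatibility conditions in Assumption 3.1(4) guarantee that $\Psi(\mu,h)=h\cdot G(\mu)$ is a homeomorphism of $\bigl(\prod_i B_i\bigr)\times\mathcal{F}$ onto $G$, where $\mathcal{F}=P^{-1}(\alpha)$ is the set of elements normalized on every disk, with inverse $f\mapsto\bigl(P(f),\, f\cdot G(P(f))^{-1}\bigr)$; the support axioms then upgrade this to a homeomorphism of pairs $(G,G_c)\cong\bigl(\prod^\omega\ell_2,\sum^\omega\ell_2\bigr)\times(\mathcal{F},\mathcal{F}^c)$, and stability follows because $\bigl(\prod^\omega\ell_2,\sum^\omega\ell_2\bigr)$ is stable over itself. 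The essential idea missing from your write-up is the construction of the retractions $P_i$ and of the complementary slice $\mathcal{F}$; without them the single-ball stabilities cannot be assembled by the formula you propose.
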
 

See Section 5.2 for the group ${\cal H}(M, \mu)$ of $\mu$-preserving homeomorphisms of $M$.  

In dimension $n = 2$, combined with the ANR-property and homotopy density 
(\cite[Corollary 1.1]{Ya2}, \cite[Theorem 3.2, Corollary 3.1]{Ya3}, 
\cite[Theorems 1.1, 1.2]{Ya5}),    
this stability property 
enables us to apply the characterization of 
$(\prod^\omega \ell_2, \sum^\omega \ell_2)$-manifolds (Theorem~\ref{thm_(s,sf)}) to the pairs $({\cal D}(M)_0, {\cal D}^c(M)_0)$ and $({\cal H}(M)_0, {\cal H}^c(M)_0)$. 
For a subgroup $H$ of a transformation group $G$ on $M$ we use the following notations: 
$H_X = \{ g \in H \mid \hat{g}|_X = \id_X \}$ for $X \subset M$ and $H_c = H \cap G_c$. 
Let $H_1$ denote the path component of the unit element $e$ in $H$, and set 
$$(H_c)^\ast_1 = \cup \{ (H_{M - K})_1\mid \mbox{$K$ is a compact subset of $M$} \}.$$ 

\begin{thm}\label{thm_main} 
Suppose $G$ is one of the following groups: 
\begin{enumerate}
\item ${\cal D}_X(M)$ for a non-compact connected smooth 2-manifold $M$ and a compact submanifold $X$ of $M$. 
\item ${\cal H}_X(M)$ for a non-compact connected 2-manifold $M$ possibly with boundary and 
a compact subpolyhedron $X$ of $M$ with respect to some triangulation of $M$. 
\end{enumerate}
Then the pair $(G_0, H)$ is a $(\prod^\omega \ell_2, \sum^\omega \ell_2)$-manifold for any subgroup $H$ of $G_0$ such that 
$H$ is $F_\sigma$ in $G$ and 
$(G_c)_1^\ast \subset H \subset (G_0)_c$. 
\end{thm}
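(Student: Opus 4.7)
The plan is to apply the characterization of $(\prod^\omega \ell_2, \sum^\omega \ell_2)$-manifolds supplied by Theorem~\ref{thm_(s,sf)} to the pair $(G_0, H)$. As described in the introduction, the hypotheses of that characterization amount to three conditions: $G_0$ is a separable metrizable ANR; $H$ is homotopy dense in $G_0$ and $F_\sigma$ in $G$; and the pair $(G_0, H)$ is $(\prod^\omega \ell_2, \sum^\omega \ell_2)$-stable. The $F_\sigma$ condition is built into the hypothesis of Theorem~\ref{thm_main}, so only the ANR property, the homotopy density of $H$, and the stability remain to be verified.

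For the ANR property and homotopy density I would invoke the references already cited in the introduction. By \cite[Corollary 1.1]{Ya2}, \cite[Theorem 3.2 and Corollary 3.1]{Ya3}, and \cite[Theorems 1.1, 1.2]{Ya5}, in the two-dimensional setting the groups ${\cal D}_X(M)$ and ${\cal H}_X(M)$ are ANR's (so in particular the component $G_0$ is an ANR), and the subgroup $(G_c)^\ast_1$ is homotopy dense in $G_0$. Since every admissible $H$ contains $(G_c)^\ast_1$, and homotopy density passes upward to any intermediate subset, $H$ is itself homotopy dense in $G_0$.

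For the stability, the starting point is Theorem~\ref{thm_stability}, which furnishes a homeomorphism $(G \times \prod^\omega \ell_2, G_c \times \sum^\omega \ell_2) \cong (G, G_c)$. The descent to $(G_0, H)$ proceeds in two stages. First, since $G_0$ is a component of $G$ and both $\prod^\omega \ell_2$ and $\sum^\omega \ell_2$ are path-connected, the homeomorphism permutes components of the product with components of the target, and a subsequent left-translation (by a suitable element of $G_c$, to preserve the subpair) can be used to align the component containing $(\id, 0)$ with $G_0$; this restricts the stability to the component pair $(G_0, (G_0)_c)$. Second, one needs the homeomorphism so obtained to carry $H \times \sum^\omega \ell_2$ onto $H$ for every admissible $H$.

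The main obstacle I anticipate lies in this second stage. The stability homeomorphism of Theorem~\ref{thm_stability} is built by multiplication by factors that are diffeomorphisms (respectively homeomorphisms) supported in exhausting compact subsets of $M$, in the spirit of the Moser arguments of \cite{Mos} and their adaptations in \cite{Ya3, Ya4}. The required bookkeeping is to verify that each such factor lies in $(G_c)^\ast_1$, so that left-multiplication by it preserves every subgroup $H$ with $(G_c)^\ast_1 \subset H \subset (G_0)_c$. Granting this --- which should be visible from the explicit construction used to prove Theorem~\ref{thm_stability} --- the pair $(G_0, H)$ inherits stability, all hypotheses of Theorem~\ref{thm_(s,sf)} are met, and the conclusion follows.
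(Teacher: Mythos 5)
Your proposal is correct and follows essentially the same route as the paper: one verifies conditions (i)--(iii) of Theorem~\ref{thm_(s,sf)} exactly as you describe, citing the ANR and homotopy-density results of \cite{Ya2, Ya3, Ya5} for (i) and (ii), and deriving the stability of $(G_0, H)$ from the explicit product decomposition behind Theorem~\ref{thm_stability}. The bookkeeping you flag as the main obstacle is precisely what the paper records in Propositions~\ref{prop_diff-gr_stability} and~\ref{prop_homeo-gr_stability}: the correcting factors all lie in the path-connected group ${\cal G}$ with ${\cal G}^c \subset (G_c)_1^\ast$, so the homeomorphism restricts directly to any pair of subgroups sandwiched between $({\cal G}, {\cal G}^c)$ and $(G, G_c)$ --- in particular to $(G_0, H)$ --- making the component-aligning translation of your first stage unnecessary.
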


Note that the subgroups $H = (G_c)_1^\ast, (G_c)_0$ and $(G_0)_c$ 
satisfy the conditions in Theorem~\ref{thm_main}. 

The topological type of any $(\prod^\omega \ell_2, \sum^\omega \ell_2)$-manifold $(X, A)$ is classified by the homotopy type of $X$ (Theorem~\ref{thm_(s,sf)}). 
Hence, by \cite[Theorem 1.1]{Ya2} and \cite[Theorem 1.1]{Ya5} we have the conclusion on the global topological type. Consider the next two cases : 
\begin{enumerate}
\item[(I)\,] $(M, X) \cong ({\Bbb R}^2, \emptyset)$, $({\Bbb R}^2, 1pt)$, 
$({\Bbb S}^1 \times {\Bbb R}^1, \emptyset)$, $({\Bbb S}^1 \times [0, 1), \emptyset)$ or $({\Bbb P}^2 \setminus 1pt, \emptyset)$. 
\vskip 1mm 
\item[(II)] $(M, X)$ is not the case (I) (in the cases (1) and (2) in Theorem~\ref{thm_main}). 
\end{enumerate} 
Here ${\Bbb R}^n$ is the Euclidean $n$-space, ${\Bbb S}^n$ is the $n$-sphere and ${\Bbb P}^2$ is the projective plane.

\begin{cor}\label{cor_top-type} In Theorem~\ref{thm_main} we have  
$(G_0, H) \cong \begin{cases}
(\prod^\omega \ell_2, \sum^\omega \ell_2) \times {\Bbb S}^1 & \mbox{in the case $(I)$,} \\[1mm] 
(\prod^\omega \ell_2, \sum^\omega \ell_2) & \mbox{in the case $(I\!I)$.} 
\end{cases}$ 
\end{cor}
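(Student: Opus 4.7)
The plan is to combine Theorem~\ref{thm_main} with the classification part of Theorem~\ref{thm_(s,sf)}: once we know that $(G_0, H)$ is a $\big(\prod^\omega \ell_2, \sum^\omega \ell_2\big)$-manifold, its topological type as a pair depends only on the homotopy type of the ambient space $G_0$. So the whole statement collapses to identifying that homotopy type in the two regimes (I) and (II).

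First I would cite the existing homotopy classifications of identity components of transformation groups on non-compact surfaces, namely \cite[Theorem 1.1]{Ya2} for case (1) of Theorem~\ref{thm_main} and \cite[Theorem 1.1]{Ya5} for case (2). Those results assert that the five pairs $(M, X)$ listed in (I) — the plane, the once-punctured plane, the open and half-open cylinders ${\Bbb S}^1 \times {\Bbb R}^1$ and ${\Bbb S}^1 \times [0,1)$, and the once-punctured projective plane — are exactly the pairs for which $G_0 \simeq {\Bbb S}^1$, the generator being the obvious rotation; in every other admissible pair, placed in case (II), the identity component $G_0$ is contractible.

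Next I would exhibit two model $\big(\prod^\omega \ell_2, \sum^\omega \ell_2\big)$-manifolds realizing these homotopy types. The pair $\big(\prod^\omega \ell_2, \sum^\omega \ell_2\big)$ itself is trivially one, with contractible ambient space. The product $\big(\prod^\omega \ell_2, \sum^\omega \ell_2\big) \times {\Bbb S}^1$ is also such a manifold: locally around any point of ${\Bbb S}^1$ it is modeled on the pair $\big(\prod^\omega \ell_2 \times {\Bbb R},\, \sum^\omega \ell_2 \times {\Bbb R}\big)$, and the absorption identities $\prod^\omega \ell_2 \times {\Bbb R} \cong \prod^\omega \ell_2$ and $\sum^\omega \ell_2 \times {\Bbb R} \cong \sum^\omega \ell_2$ return us to the standard model pair; its ambient space is homotopy equivalent to ${\Bbb S}^1$. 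The uniqueness part of Theorem~\ref{thm_(s,sf)} then forces the homeomorphism types stated in cases (I) and (II).

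The only non-routine checkpoint is confirming that $\big(\prod^\omega \ell_2, \sum^\omega \ell_2\big) \times {\Bbb S}^1$ genuinely lies in the $\big(\prod^\omega \ell_2, \sum^\omega \ell_2\big)$-manifold category, which is a repackaging of the stability built into the characterization together with the standard absorption identities recorded above. Everything else is a direct concatenation of Theorems~\ref{thm_main} and~\ref{thm_(s,sf)} with the cited homotopy-type calculations.
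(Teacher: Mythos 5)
Your proposal is correct and follows the paper's own (very terse) argument exactly: apply the classification part of Theorem~\ref{thm_(s,sf)} to reduce everything to the homotopy type of $G_0$, quote the known homotopy-type computations ($G_0 \simeq {\Bbb S}^1$ in case (I), contractible in case (II)), and match against the two model pairs, your absorption argument for $\big(\prod^\omega \ell_2, \sum^\omega \ell_2\big) \times {\Bbb S}^1$ being the one detail the paper leaves implicit. One clerical slip: the citations are swapped --- \cite[Theorem 1.1]{Ya5} covers the diffeomorphism case (1) and \cite[Theorem 1.1]{Ya2} the homeomorphism case (2), not the other way around.
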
 

This paper is organized as follows. 
In Section 2 we deduce the characterization of 
$(\prod^\omega \ell_2, \sum^\omega \ell_2)$-manifolds 
based upon the stability property (Theorem~\ref{thm_(s,sf)}). 
In Section 3 we obtain the results on the diffeomorphism groups in 
Theorems~\ref{thm_stability}, \ref{thm_main} and Corollary~\ref{cor_top-type}, while 
Section 4 includes the results on the homeomorphism groups.

%%%%%%%%%%%%%%%

\section{Characterization of topological $(\prod^\omega \ell_2, \sum^\omega \ell_2)$-manifolds}

In \cite{Ya1} we obtained a general characterization of 
infinite-dimensional manifold tuples based upon the stability property 
(cf. \cite{DT, vM, To}, \cite{BGM, BM, Ca, CDM}, \cite{GH}, etc.). 
In this section we deduce a characterization of $(\prod^\omega \ell_2, \sum^\omega \ell_2)$-manifolds from this general characterization theorem. 

\subsection{General characterization of infinite-dimensional manifold pairs under the stability property} \mbox{} 

We begin with the definition of basic terminology. 
In this paper spaces are assumed to be separable metrizable and maps are continuous. The symbol $\cong$ means a homeomorphism, 
while $\simeq$ means a homotopy equivalence. 
A pair of spaces means a pair $(X, A)$ of a topological space $X$ and a subset $A$ of $X$. 
We say that two pairs $(X, A)$ and $(Y, B)$ are homeomorphic and write $(X, A) \cong (Y, B)$ if there exists a homeomorphism $h : X \to Y$ with $h(A) = B$. 
For a model space $E$, an $E$-manifold means a space $X$ locally homeomorphic to $E$. More generally, 
for a model pair $(E, E_1)$, by an $(E, E_1)$-manifold we mean a pair $(X, X_1)$ of spaces such that 
each point $x$ of $X$ admits an open neighborhood $U$ of $x$ in $X$ and an open subset $V$ of $E$ such that $(U, U \cap X_1) \cong (V, V \cap E_1)$.

A closed subset $A$ of a space $X$ is called a $Z$-set of $X$ if for any open cover ${\cal U}$ of $X$ there exists a map $f : X \to X - A$ which is ${\cal U}$-close to $\id_X$.  
A $\sigma$ $Z$-set of $X$ means a countable union of $Z$-sets of $X$. 
A subset $A$ of $X$ is said to be homotopy dense (HD) if there exists a homotopy $h_t : X \to X$ $(t \in [0,1])$ such that $h_0 = \id_X$ and $h_t(X) \subset A$ for $t \in (0, 1]$. 

Consider the countable product $s = \prod_{k \in {\Bbb N}} {\Bbb R}$, which is a topological linear space under the coordinatewise sum and scalar product. 
Since $s$ is a separable Fr\'echet space, it follows that $s \cong \ell_2$. 
Suppose $s_1$ is a linear subspace of $s$. 
For $I \subset {\Bbb N}$ we set $c(I) = {\Bbb N} \setminus I$ and $s(I) = \prod_{k \in I} {\Bbb R}$, and let $\pi_I : s \to s(I)$ denote the projection. 
We set $s_1(I) = \pi_I(s_1) \subset s(I)$. 
Let ${\mathcal M} \equiv {\mathcal M}(s, s_1)$ denote the class of pairs $(X, A)$ which admit a closed embedding $h : X \to s$ such that $h^{-1}(s_1) = A$. 

\vspace{1mm} 

\begin{assumption}\label{ass_pair} We assume that the model pair $(s, s_1)$ satisfies the following conditions : 
\begin{itemize}
\item[($\ast_1$)] $s_1$ is a linear subspace of $s$ and $s_1$ is a $\sigma$ $Z$-set of $s_1$ itself. 
\item[($\ast_2$)] $s_1$ is homotopy dense in $s$. 
\item[($\ast_3$)] There exists a sequence $I_n \, (n \geq 1)$ of disjoint infinite subsets of ${\Bbb N}$ such that for each $n \geq 1$ (a) ${\rm min} \, I_n > n$, (b) $s_1 = s_1(I_n) \times s_1(c(I_n))$ and (c) $(s(I_n), s_1(I_n)) \cong (s, s_1)$.  
\end{itemize}
\end{assumption} 
\vspace{1mm} 

Under Assumption~\ref{ass_pair} we have the following characterization and homotopy invariance of $(s, s_1)$-manifolds. 
This is exactly the case that $\ell = 1$ in \cite[Theorem 2.9, Corollary 2.10]{Ya1})

\begin{thm}\label{thm_ch}
A pair $(X, A)$ is an $(s, s_1)$-manifold iff 
\begin{enumerate}
\item $X$ is a separable completely metrizable ANR,  
\item {\rm (i)} $(X, A) \in {\mathcal M}(s, s_1)$, \ {\rm (ii)} $A$ is homotopy dense in $X$, 
\item $(X, A)$ is $(s, s_1)$-stable. 
\end{enumerate}
\end{thm}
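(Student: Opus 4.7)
The plan is to obtain Theorem~\ref{thm_ch} as the special case $\ell = 1$ of the general characterization of infinite-dimensional manifold tuples given in \cite[Theorem 2.9]{Ya1}, as the author himself indicates. That general theorem classifies $(\ell+1)$-tuples modeled on $(s, s_1, \ldots, s_\ell)$ under axioms that reduce, when $\ell = 1$, to Assumption~\ref{ass_pair}. Consequently, no genuinely new argument is needed; the proof is a matter of matching the axioms and reading off the characterizing conditions.

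Concretely, I would first verify that Assumption~\ref{ass_pair} is exactly the $\ell = 1$ instance of the hypotheses imposed on model tuples in \cite{Ya1}: $(\ast_1)$ supplies the $\sigma$ $Z$-set property of $s_1$ in itself, $(\ast_2)$ gives homotopy density of $s_1$ in $s$, and $(\ast_3)$ furnishes the coordinate splittings needed to absorb copies of the model pair. I would then specialize the conclusion of \cite[Theorem 2.9]{Ya1} to $\ell = 1$ and confirm that the three resulting conditions coincide with (1)--(3) of Theorem~\ref{thm_ch}: that $X$ is a separable completely metrizable ANR, that $(X, A) \in {\mathcal M}(s, s_1)$ with $A$ homotopy dense in $X$, and that $(X, A)$ is $(s, s_1)$-stable.

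The ``only if'' direction of the specialized statement is essentially local: an $(s, s_1)$-manifold $(X, A)$ is covered by charts $(U, U \cap A) \cong (V, V \cap s_1)$ with $V$ open in $s$, and standard patching promotes the local Polish ANR structure, embedding, homotopy density, and stability to global ones. The substantive content is the ``if'' direction, carried out in \cite{Ya1} through a $Z$-set absorption and unknotting argument that uses the splitting $(\ast_3)$ to stabilize an embedding $X \hookrightarrow s$ and identify $(X, A)$ locally with the model pair. The main obstacle, therefore, is not the proof of Theorem~\ref{thm_ch} itself but rather the verification of its hypotheses for the concrete diffeomorphism and homeomorphism group pairs treated in the remainder of the paper, notably the $(s, s_1)$-stability of Theorem~\ref{thm_stability}.
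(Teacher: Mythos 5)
Your proposal matches the paper exactly: the paper gives no independent proof of Theorem~\ref{thm_ch}, stating only that it is the case $\ell = 1$ of \cite[Theorem 2.9, Corollary 2.10]{Ya1} under Assumption~\ref{ass_pair}, which is precisely the reduction you describe. Your additional remarks on the two directions and on where the real work lies are accurate but not part of the paper's (essentially citational) argument.
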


\begin{cor}\label{cor_hom-inv} 
Suppose $(X, A)$ and $(Y, B)$ are $(s, s_1)$-manifolds. 
Then $(X, A) \cong (Y, B)$ iff $X \simeq Y$. 
\end{cor}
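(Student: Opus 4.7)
The forward direction is immediate, since any homeomorphism of pairs $(X,A) \cong (Y,B)$ restricts to a homeomorphism $X \cong Y$ and hence to a homotopy equivalence $X \simeq Y$. For the converse I would reduce the pair classification to the classification of $s$-manifolds combined with a uniqueness argument for the absorbing subset.

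First, since $(X,A)$ and $(Y,B)$ are $(s,s_1)$-manifolds, the spaces $X$ and $Y$ are themselves $s$-manifolds (locally homeomorphic to open subsets of $s$). Henderson's classification of $s$-manifolds together with the hypothesis $X \simeq Y$ then yields a homeomorphism $h\colon X \to Y$. Setting $A' = h^{-1}(B)$, the pair $(X, A')$ is an $(s,s_1)$-manifold homeomorphic to $(Y, B)$, so the problem reduces to producing an ambient homeomorphism $H\colon X \to X$ with $H(A) = A'$.

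For this uniqueness step, the plan is to exploit the stability hypothesis provided by Theorem~\ref{thm_ch}\,(3). Replace $(X,A)$ and $(X,A')$ by their stabilizations $(X \times s,\, A \times s_1)$ and $(X \times s,\, A' \times s_1)$, in which the $s_1$-factor makes the absorbing structure uniform and global. Assumption~\ref{ass_pair} (homotopy density of $s_1$ in $s$, the $\sigma\, Z$-property, and the product decomposition in $(\ast_3)$) implies that $A \times s_1$ and $A' \times s_1$ are both homotopy dense $F_\sigma$ subsets of $X \times s$ whose local model is the pair $(s, s_1)$ itself. A standard absorbing-set uniqueness theorem of the type established in \cite[Theorem 2.9]{Ya1} then produces an ambient homeomorphism of $X \times s$, close to the identity, carrying $A \times s_1$ onto $A' \times s_1$; composing with the stability homeomorphisms furnishes the desired $H$.

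The main obstacle is the last step: one must verify that the absorption properties encoded globally in Assumption~\ref{ass_pair} are in fact local and therefore inherited by arbitrary $(s,s_1)$-manifolds, and that the resulting uniqueness is strong enough to produce an ambient homeomorphism rather than merely a homotopy equivalence of pairs. This is exactly the content of the $\ell = 1$ case of \cite[Theorem 2.9]{Ya1} invoked in the excerpt, and it is where all three conditions $(\ast_1)$--$(\ast_3)$ of Assumption~\ref{ass_pair} play their role.
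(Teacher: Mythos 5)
Your forward direction is fine, and your overall plan for the converse is the standard one. Be aware, however, that the paper itself offers no proof of this corollary: it is quoted verbatim as the $\ell = 1$ case of \cite[Corollary 2.10]{Ya1}, exactly as Theorem~\ref{thm_ch} is the $\ell = 1$ case of \cite[Theorem 2.9]{Ya1}. Your sketch --- classify the underlying spaces as $s$-manifolds by homotopy type via Henderson's theorem, pull $B$ back to $A' = h^{-1}(B) \subset X$, and then appeal to uniqueness of the absorbing structure up to an ambient homeomorphism of $X$ --- is the route by which such classification corollaries are normally deduced from a characterization theorem, and it is consistent with the machinery of \cite{Ya1}. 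But notice that the decisive step (two subsets $A, A'$ of the same $s$-manifold, each turning it into an $(s,s_1)$-manifold, are ambiently equivalent) is still entirely outsourced to \cite{Ya1} in your write-up, so you have not supplied an argument the paper lacks; you have only relocated the citation. Two small corrections: in the general setting of Assumption~\ref{ass_pair} the relevant set-theoretic condition on $A$ is membership in $\mathcal{M}(s,s_1)$, not ``$F_\sigma$'' --- the $F_\sigma$ formulation is special to the concrete pair $(s^\infty, s^\infty_f)$ of Theorem~\ref{thm_(s,sf)} --- and the stabilization $(X \times s, A \times s_1)$ you introduce is already packaged into the hypotheses of Theorem~\ref{thm_ch}, so it need not be performed as a separate reduction.
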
 

\subsection{Characterization of $(\prod^\omega \ell_2, \sum^\omega \ell_2)$-manifolds} \mbox{} 

Next we deduce a characterization and classification of $\big(\prod^\omega \ell_2, \sum^\omega \ell_2\big)$-manifolds from Theorem~\ref{thm_ch} and Corollary~\ref{cor_hom-inv}. 

\begin{thm}\label{thm_(s,sf)} \mbox{} 
\begin{itemize} 
\item[(1)] 
A pair $(X, A)$ is a $\big(\prod^\omega \ell_2, \sum^\omega \ell_2\big)$-manifold iff it satisfies the following conditions: 
\begin{itemize} 
\item[(i)\ ] $X$ is a separable completely metrizable ANR. 
\item[(ii)\,] {\rm (a)} $A$ is $F_\sigma$ in $X$, \ {\rm (b)} $A$ is homotopy dense in $X$. 
\item[(iii)] $(X, A)$ is $\big(\prod^\omega \ell_2, \sum^\omega \ell_2\big)$-stable. 
\end{itemize}
\item[(2)] Suppose $(X, A)$ and $(Y, B)$ are $\big(\prod^\omega \ell_2, \sum^\omega \ell_2\big)$-manifolds. 
Then $(X, A) \cong (Y, B)$ iff $X \simeq Y$. 
\end{itemize}
\end{thm}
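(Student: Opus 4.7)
The plan is to derive Theorem~\ref{thm_(s,sf)} from Theorem~\ref{thm_ch} and Corollary~\ref{cor_hom-inv} by exhibiting $\big(\prod^\omega \ell_2, \sum^\omega \ell_2\big)$ as a concrete instance of the abstract model pair $(s, s_1)$. Part (2) is then immediate from Corollary~\ref{cor_hom-inv}, so I concentrate on part (1), where the only new ingredient is to show that, within the hypotheses of Theorem~\ref{thm_ch}, the condition $(X, A) \in {\cal M}(s, s_1)$ can be replaced by the $F_\sigma$-condition (ii)(a).

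First, I would fix a partition ${\Bbb N} = \bigsqcup_{n \geq 1} J_n$ into disjoint infinite subsets and define
\[
s_1 = \bigl\{\, y \in s \;\big|\; y|_{J_n} = 0 \text{ for all but finitely many } n \,\bigr\}.
\]
Under the identifications $s(J_n) \cong s \cong \ell_2$ and $s \cong \prod_n s(J_n)$, this pair becomes $\big(\prod^\omega \ell_2, \sum^\omega \ell_2\big)$. I then verify Assumption~\ref{ass_pair} for $(s, s_1)$. Writing $s_1 = \bigcup_N F_N$ with $F_N = \{y \in s_1 : y|_{J_n} = 0 \text{ for } n > N\}$, each $F_N$ is closed in $s_1$, and a small translation supported in a single coordinate of $J_{N+1}$ pushes $s_1$ off $F_N$, yielding the $Z$-set property and hence $(\ast_1)$. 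Condition $(\ast_2)$ is the standard homotopy density of the weak product in the full product, implemented by a homotopy that damps the tail blocks to zero. For $(\ast_3)$ I further partition each $J_m = \bigsqcup_n J_{m,n}$ and set $I_n = \bigsqcup_m J_{m,n}$, adjusted so that $\min I_n > n$; the block structure then yields $s_1 = s_1(I_n) \times s_1(c(I_n))$ and $(s(I_n), s_1(I_n)) \cong (s, s_1)$.

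With Assumption~\ref{ass_pair} verified, Theorem~\ref{thm_ch} gives the characterization except that (2)(i) there reads $(X, A) \in {\cal M}(s, s_1)$ rather than ``$A$ is $F_\sigma$ in $X$''. One direction of the replacement is immediate: if $h : X \to s$ is a closed embedding with $h^{-1}(s_1) = A$, then $A = \bigcup_N h^{-1}(F_N)$ is $F_\sigma$ in $X$. For the converse, suppose $X$ is a separable completely metrizable ANR, $A$ is homotopy dense, and $A = \bigcup_n A_n$ with $A_n$ closed and $A_1 \subset A_2 \subset \cdots$. I would reserve a block $J_0$ for a fixed closed embedding $h_0 : X \hookrightarrow s(J_0)$, and for each $n \geq 1$ choose a continuous $\psi_n : X \to s(J_n)$ with $\psi_n^{-1}(0) = A_n$ whose image is confined to a small ball of $s(J_n)$ (so that the assembled map remains a closed embedding). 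By the increasing property of $\{A_n\}$, the combined map $h = (h_0, \psi_1, \psi_2, \ldots)$ then satisfies $h(x) \in s_1$ iff $x \in A_n$ for all but finitely many $n$ iff $x \in A$.

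The step I expect to be the main obstacle is this last construction: arranging $h$ to be a \emph{closed} embedding while simultaneously enforcing $\psi_n^{-1}(0) = A_n$ exactly. The damping of each $\psi_n$ into a bounded region of $s(J_n)$ and the anchoring role of $h_0$ resolve this, but the estimates require care. A secondary technical point is the verification of $(\ast_3)$, where the clause $\min I_n > n$ forces a staggered choice of the sub-blocks $J_{m,n}$. Beyond these two issues the argument proceeds by routine application of the machinery developed in Section 2.1.
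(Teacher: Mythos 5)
Your plan follows the paper's proof almost exactly: realize $\big(\prod^\omega \ell_2, \sum^\omega \ell_2\big)$ as a concrete model pair $(s,s_1)$ via a block decomposition of ${\Bbb N}$, verify Assumption~\ref{ass_pair}, quote Theorem~\ref{thm_ch} and Corollary~\ref{cor_hom-inv}, and replace the condition $(X,A)\in{\mathcal M}(s,s_1)$ by the $F_\sigma$-condition through an explicit closed embedding. For that last step the paper uses a single map $g:X\to s$ with $g^{-1}(\sigma)=A$ together with a coordinate shuffle $\chi:(s\times s^\infty,\,\sigma\times s^\infty_f)\cong(s^\infty,s^\infty_f)$, which amounts to the same thing as your block-by-block detectors $\psi_n$; note that the ``main obstacle'' you flag is not one, since a map of the form $(h_0,\psi)$ with $h_0$ a closed embedding and $\psi$ merely continuous is automatically a closed embedding, so no boundedness of the $\psi_n$ is needed.

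The one step that does not work as written is your verification of $(\ast_1)$: translation by a fixed vector $\epsilon e_k$ with $k\in J_{N+1}$ does \emph{not} push $s_1$ off $F_N$, because any $y\in s_1$ with $y_k=-\epsilon$ and vanishing blocks beyond $N$ lands back in $F_N$ (e.g.\ $y=-\epsilon e_k\mapsto 0\in F_N$). The paper avoids this by writing down an explicit absorbing homotopy $\psi(\cdot,t)$ whose two entries following the truncation point are $k+1-t$ and $t-k$ for $t\in[k,k+1]$ --- these are never simultaneously zero --- which shows in one stroke that $s^\infty_f\setminus s^n$ is homotopy dense in $s^\infty$, giving both $(\ast_1)$ and $(\ast_2)$. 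Since your $F_N$ is exactly the paper's $s^N$ under the block identification, substituting this homotopy for the translation repairs the argument with no other changes.
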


Since $s \cong \ell_2$, it follows that $\big(\prod^\omega \ell_2, \sum^\omega \ell_2\big) \cong \big(\prod^\omega s, \sum^\omega s\big)$. 
The latter is also denoted by the symbol $(s^\infty, s^\infty_f)$ for notational simplicity. 
Since 
$s^\infty = \prod_{k \in {\Bbb N}} s = \prod_{n \in {\Bbb N}} \! \big(\prod_{k \in {\Bbb N}} {\Bbb R}\big) = \prod_{(n, k) \in {\Bbb N}^2} {\Bbb R}$, 
any bijection $\alpha : {\Bbb N}^2 \cong {\Bbb N}$ induces a linear homeomorphism $\phi_\alpha : s^\infty \cong s$ and a linear subspace $s_1 = \phi_\alpha(s^\infty_f)$ of $s$. 
Hence Theorem~\ref{thm_(s,sf)} follows from 
Theorem~\ref{thm_ch}, Corollary~\ref{cor_hom-inv} and the next two lemmas. 

\begin{lemma} The pair $(s, s_1)$ satisfies Assumption~\ref{ass_pair}. 
\end{lemma}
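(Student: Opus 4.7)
\noindent\textit{Plan.} The plan is to identify $(s, s_1)$ with $(s^\infty, s^\infty_f)$ via $\phi_\alpha$, where $s^\infty = \prod_{m \in {\Bbb N}} s$ and $s^\infty_f = \{(x_m)_m \in s^\infty : x_m = 0 \in s\ \text{for almost all}\ m\}$, and to verify the three conditions directly from this product description.

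For ($\ast_1$), I would write $s_1 = \bigcup_{N \ge 1} A_N$ with $A_N := \{(x_m)_m \in s^\infty_f : x_m = 0 \text{ for } m > N\}$, a closed linear subspace of $s_1$, and show that each $A_N$ is a $Z$-set of $s_1$. The key is that $\{0\}$ is a $Z$-set of the separable infinite-dimensional Fr\'echet space $s$, so by a standard partition-of-unity argument, for any open cover $\cal U$ of $s_1$ one produces a continuous $g : s \to s \setminus \{0\}$ close enough to $\id_s$ (in a compatible metric) that the map
$$ f((x_m)_m) := (x_1, \ldots, x_N, g(x_{N+1}), x_{N+2}, x_{N+3}, \ldots) $$
is $\cal U$-close to $\id_{s_1}$ in the product topology; by construction $f$ lands in $s_1 \setminus A_N$ since $g(x_{N+1}) \ne 0$. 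For ($\ast_2$), I would choose continuous cut-offs $\mu_m : [0, 1] \to [0, 1]$ with $\mu_m(0) = 1$ and $\mu_m(t) = 0$ for $t \le 1/m$, and set $h_t((x_m)_m) := (\mu_m(t) x_m)_m$; then $h_0 = \id_s$ and, for each $t > 0$, $\mu_m(t) = 0$ once $m > 1/t$, forcing $h_t(s) \subset s_1$, while continuity is coordinatewise.

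For ($\ast_3$), let $F_n := \pi_1(\alpha^{-1}\{1, \ldots, n\}) \subset {\Bbb N}$, a finite set, and invoke a standard diagonal construction to produce pairwise disjoint infinite sets $M_n \subset {\Bbb N} \setminus F_n$. Then $I_n := \alpha(M_n \times {\Bbb N})$ will satisfy: (a) $\min I_n > n$ since $M_n \cap F_n = \emptyset$; (b) because $I_n$ is the $\alpha$-image of a union of entire rows $\{m\} \times {\Bbb N}$ with $m \in M_n$, the finite-row-support condition defining $s^\infty_f$ decouples across the partition ${\Bbb N}^2 = (M_n \times {\Bbb N}) \sqcup (({\Bbb N} \setminus M_n) \times {\Bbb N})$, yielding $s_1 = s_1(I_n) \times s_1(c(I_n))$; (c) re-indexing $M_n \cong {\Bbb N}$ gives $(s(I_n), s_1(I_n)) \cong (\prod_{M_n} s, \sum_{M_n} s) \cong (s^\infty, s^\infty_f) \cong (s, s_1)$. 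The main obstacle will be the $Z$-set claim in ($\ast_1$): uniform control of $g$ relative to an arbitrary open cover genuinely uses that singletons are $Z$-sets in $s$; the remaining axioms are bookkeeping with $\alpha$ and coordinate-wise continuity in product topologies.
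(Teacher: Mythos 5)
Your overall strategy is the paper's: transport everything to $(s^\infty,s^\infty_f)$ via $\phi_\alpha$ and verify the three axioms from the product structure. Your $(\ast_2)$ homotopy is fine (modulo the typo: you want $\mu_m(t)=0$ for $t\ge 1/m$, not $t\le 1/m$, to be consistent with $\mu_m(0)=1$ and with your later remark that $\mu_m(t)=0$ once $m>1/t$), and your $(\ast_3)$ argument is correct; you use whole rows $M_n\times{\Bbb N}$ where the paper uses columns ${\Bbb N}\times J$, but both choices make the finite-support condition decouple and both give $(s(I_n),s_1(I_n))\cong(s,s_1)$, and your device $F_n=\pi_1(\alpha^{-1}\{1,\dots,n\})$ correctly arranges $\min I_n>n$.

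The genuine gap is in $(\ast_1)$. The $Z$-set condition quantifies over \emph{arbitrary} open covers ${\cal U}$ of $s^\infty_f$, and $s^\infty_f$ is not compact, so ${\cal U}$ need not have a positive Lebesgue number: there is in general no single $\delta>0$ such that every map moving points by less than $\delta$ is ${\cal U}$-close to the identity. Hence choosing one $g:s\to s\setminus\{0\}$ ``close enough to $\id_s$ in a compatible metric'' and inserting it into the $(N+1)$-st coordinate cannot produce a ${\cal U}$-close map for every cover; the amount by which you are allowed to perturb must depend on the point $(x_m)_m$, not only on $x_{N+1}$. The standard repair is to replace the single $g$ by a homotopy: since $\{0\}$ is a $Z$-set (indeed homotopy negligible) in $s$, choose $g_t:s\to s$ with $g_0=\id_s$ and $g_t(s)\subset s\setminus\{0\}$ for $t>0$, set $f_t(x)=(x_1,\dots,x_N,g_t(x_{N+1}),x_{N+2},\dots)$, and then compose with a continuous time function $\tau:s^\infty_f\to[0,1]$, positive everywhere, chosen (via paracompactness, or a continuous $\e$-function subordinate to ${\cal U}$) so that $x\mapsto f_{\tau(x)}(x)$ is ${\cal U}$-close to the identity. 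This is exactly why the paper organizes the proof around a single explicit absorbing homotopy $\psi:s^\infty\times[n,\infty]\to s^\infty$ with $\psi_\infty=\id$ and $\psi_t(s^\infty)\subset s^\infty_f\setminus s^n$ for $t<\infty$: one homotopy simultaneously yields $(\ast_2)$ and, after the time reparametrization just described, the $Z$-set property of each $s^n$ in $s^\infty_f$ for every open cover. Your two separate constructions can be made to work, but the $(\ast_1)$ half needs this point-dependent control; as written it only proves the ``uniformly close'' version of the $Z$-set property.
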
 

\begin{proof} 
$(\ast_1)$, $(\ast_2)$ For each $n \geq 1$ the closed subset $s^n = s^n \times \{ (0, 0, \cdots) \}$ of $s^\infty$ satisfies the condition that 
$s^\infty_f - s^n$ is homotopy dense in $s^\infty$. 
In fact, with replacing the interval $[0,1]$ by $[n, \infty]$ in the opposite  orientation, 
an absorbing homotopy $\psi : s^\infty \times [n, \infty] \to s^\infty$ is defined by 
$$\mbox{
$\psi((x_i)_{i \in {\Bbb N}}, t) = 
\begin{cases}
(x_1, \cdots, x_k, (t - k)x_{k+1}, k+1 - t, t - k, 0, \cdots) &  
(t \in [k, k+1], k \geq n) \\[1mm]
(x_i)_{i \in {\Bbb N}} & (t = \infty).  
\end{cases}$ 
}$$ 
This implies that $s^\infty_f$ is homotopy dense in $s^\infty$ and 
that $s^n$ is a $Z$-set of $s^\infty_f$ for each $n \geq 1$. 
Since $s^\infty_f = \cup_{n = 1}^\infty s^n$, it follows that 
$s^\infty_f$ is a $\sigma$ $Z$-set of $s^\infty_f$ itself. 
Since $(s, s_1) \cong (s^\infty, s^\infty_f)$, this implies the conditions $(\ast_1)$ and $(\ast_2)$ for the pair $(s, s_1)$. 

$(\ast_3)$ 
For any infinite subset $J$ of ${\Bbb N}$ 
it is easily seen that the subset $J' = {\Bbb N} \times J$ of ${\Bbb N}^2$ satisfies the conditions: 
(b$'$) $s^\infty_f = s^\infty_f(J') \times s^\infty_f(c(J'))$ and (c$'$) $(s^\infty(J'), s^\infty_f(J')) \cong (s^\infty, s^\infty_f)$. 
Thus the subset $I = \alpha(J')$ of ${\Bbb N}$ satisfies 
the corresponding conditions: (b) $s_1 = s_1(I) \times s_1(c(I))$ and (c) $(s(I), s_1(I)) \cong (s, s_1)$. 
Inductively we can find a sequence $J_n$ $(n \geq 1)$ of disjoint infinite subsets  of ${\Bbb N}$ with $\min \alpha({J_n}') > n$.  
Then the subsets $I_n = \alpha({J_n}')$ $(n \geq 1)$ of ${\Bbb N}$ satisfy the required condition. 
\end{proof} 

\begin{lemma} $(X, A) \in {\mathcal M}(s^\infty, s^\infty_f)$ iff 
$X$ is separable completely metrizable and $A$ is $F_\sigma$ in $X$. 
\end{lemma}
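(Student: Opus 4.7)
The lemma splits into two implications, the forward direction being essentially immediate. Assume $(X,A) \in {\mathcal M}(s^\infty, s^\infty_f)$, so that there is a closed embedding $h\colon X \to s^\infty$ with $h^{-1}(s^\infty_f) = A$. Since $s^\infty \cong s \cong \ell_2$ is separable and completely metrizable, and closed subsets of such spaces inherit these properties, $X \cong h(X)$ is separable completely metrizable. Moreover, $s^\infty_f = \bigcup_{n=1}^\infty s^n$ is a countable union of closed subsets of $s^\infty$ (each $s^n$ being cut out by the closed conditions $x_k = 0$ for $k > n$), hence $s^\infty_f$ is $F_\sigma$ in $s^\infty$, and therefore $A = h^{-1}(s^\infty_f)$ is $F_\sigma$ in $X$.

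For the converse, write $A = \bigcup_{n=1}^\infty A_n$ with each $A_n$ closed in $X$, and (after replacing $A_n$ by $A_1 \cup \cdots \cup A_n$) assume $A_n \subset A_{n+1}$. My plan is to build the closed embedding coordinatewise as $h(x) = (h_0(x), \phi_1(x), \phi_2(x), \ldots) \in s^\infty$, where $h_0\colon X \to s$ is a closed embedding carrying the topological information, while each $\phi_n\colon X \to s$ is chosen so that $\phi_n^{-1}(0) = A_n$. Then $h(x)$ will lie in $s^\infty_f$ precisely when all but finitely many of the $\phi_n(x)$ vanish, which by monotonicity of $\{A_n\}$ occurs exactly when $x \in A$.

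For $h_0$, I would invoke the standard fact that every separable completely metrizable space embeds as a closed subspace of $s \cong \ell_2$; concretely, realize $X$ as a $G_\delta$ subset $\bigcap_k U_k$ of the Hilbert cube $Q \subset s$, then apply the reciprocal-distance trick $x \mapsto \bigl(x,\hsss d(x, Q \setminus U_k)^{-1}\bigr)_k$ to obtain a closed embedding into $Q \times s \subset s$. For the remaining coordinates, fix a bounded admissible metric $d$ on $X$ and set $\phi_n(x) = (d(x, A_n), 0, 0, \ldots) \in s$; each $\phi_n$ is continuous with $\phi_n^{-1}(0) = A_n$.

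It then remains to verify the properties of $h$. Continuity is clear, and $h$ is a topological embedding because composing with the projection onto the first factor recovers the embedding $h_0$. For closedness of $h(X)$: if $h(x_k) \to y$ in $s^\infty$, then $h_0(x_k) \to y_1$ in $s$, and since $h_0(X)$ is closed, $y_1 = h_0(x)$ with $x_k \to x$, whence $y = h(x)$ by continuity. Finally, the identity $h^{-1}(s^\infty_f) = A$ is precisely the observation made above. The only non-formal input is the existence of the closed embedding $h_0\colon X \to s$; once that is in hand, the rest is routine coordinate bookkeeping, and I do not anticipate any substantive obstacle.
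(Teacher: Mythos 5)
Your proof is correct and takes essentially the same approach as the paper's: the forward direction is the same immediate observation, and for the converse both arguments combine a closed embedding of $X$ into a single $s$-factor with a sequence of auxiliary coordinates that vanish for all but finitely many indices exactly on $A$ (your $\phi_n(x) = (d(x,A_n),0,\dots)$ is precisely the standard realization of the map $g$ with $g^{-1}(\sigma)=A$ that the paper posits). The only cosmetic difference is that the paper routes through a reindexing homeomorphism $(s\times s^\infty, \sigma\times s^\infty_f)\cong(s^\infty,s^\infty_f)$, whereas you assign the coordinates directly.
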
 

\begin{proof} Recall that $(X, A) \in {\mathcal M}(s^\infty, s^\infty_f)$ iff there exists a closed embedding $f : X \to s^\infty$ such that $f^{-1}(s^\infty_f) = A$. 
Since $s^\infty$ is separable completely metrizable and $s^\infty_f$ is $F_\sigma$ in $s^\infty$, 
any $(X, A) \in {\mathcal M}(s^\infty, s^\infty_f)$ satisfies the same conditions. 

Conversely, suppose $X$ is separable completely metrizable and $A$ is $F_\sigma$ in $X$. 
Then we can find a closed embedding $e : X \to s  = s^1 \subset s^\infty$ 
and a map $g : X \to s$ such that $g^{-1}(\sigma) = A$, 
where $\sigma = \sum^\omega {\Bbb R} = {\Bbb R}^\infty_f$. 
A suitable change of indices induces a homeomorphism of pairs 
$\chi : (s \times s^\infty, \sigma \times s^\infty_f) \cong (s^\infty, s^\infty_f)$. 
The required embedding $f : X \to s^\infty$ is defined by $f = \chi \circ (g, e)$. 
Indeed, (i) since $e$ is a closed embedding, so is $f$, and (ii) since $e(X) \subset s^1 \subset  s^\infty_f$, it follows that 
$$f^{-1}(s^\infty_f)  = (g, e)^{-1}\chi^{-1}(s^\infty_f) 
= (g, e)^{-1}(\sigma \times s^\infty_f) = A.$$
\vskip -7.5mm  
\end{proof} 

%%%%%%%%%%%

\section{Stability property of $(G, G_c)$-spaces}

To treat the groups of homeomorphisms and their subgroups systematically, 
we formulate our argument to transformation groups. 
If $E \cong F \times B$ and $B$ is $\ell_2$-stable, then $E$ itself is $\ell_2$-stable. Thus the study of stability property is reduced 
to seeking for infinite-dimensional factors.  

\subsection{Factorization of $G$-spaces} \mbox{} 

In this subsection we give a simple criterion that a $G$-space admits a product decomposition. 
Suppose $E$ is a space and $G$ is a topological group 
which acts continuously on $E$ from the right. 
We seek a condition that $E$ factors 
to a product of a subspace $F$ of $E$ and a space $B$. 
Consider three maps $p : E \to B$, $f : E \to F$ and $g : B \to G$, which induce 
two maps 
$$\mbox{$\phi : E \to B \times F$; $\phi(x) = (p(x), f(x))$ \ and \ 
$\psi : B \times F \to E$; $\psi(b, y) = y \cdot g(b)$.}$$ 

\begin{lemma}~\label{lem_factorization}
The maps $\phi$ and $\psi$ are reciplocal homeomorphisms iff 
$$\mbox{$(\ast)$ \ $f(x) \cdot g(p(x)) = x$ \ $(\forall \,x \in E)$ \ \ and \ \ 
$p(y \cdot g(b)) = b$ \ $(\forall \,(b, y) \in B \times F)$.}$$
\end{lemma}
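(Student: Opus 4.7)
The plan is to unravel the condition $(\ast)$ by directly computing the two compositions $\psi\circ\phi$ and $\phi\circ\psi$, then invoke continuity. Both $\phi$ and $\psi$ are automatically continuous, since $p$, $f$, $g$ and the action $E \times G \to E$ all are; so proving that $\phi$ and $\psi$ are reciprocal bijections will suffice to conclude they are reciprocal homeomorphisms.

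For the ``if'' direction, compute $\psi(\phi(x)) = \psi(p(x), f(x)) = f(x)\cdot g(p(x))$, which equals $x$ for every $x \in E$ precisely by the first half of $(\ast)$. Next compute $\phi(\psi(b,y)) = \bigl(p(y\cdot g(b)),\,f(y\cdot g(b))\bigr)$. The $B$-coordinate equals $b$ by the second half of $(\ast)$. For the $F$-coordinate, apply the first half of $(\ast)$ to the point $x := y\cdot g(b)$ to get
\[
f(y\cdot g(b))\cdot g\bigl(p(y\cdot g(b))\bigr) = y\cdot g(b),
\]
and then substitute $p(y\cdot g(b)) = b$ to obtain $f(y\cdot g(b))\cdot g(b) = y\cdot g(b)$. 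Since $G$ is a group acting on $E$, the element $g(b) \in G$ is invertible and acts as a bijection on $E$, so right-cancellation by $g(b)$ yields $f(y\cdot g(b)) = y$. Hence $\phi\circ\psi = \id_{B\times F}$.

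For the ``only if'' direction, assume $\phi$ and $\psi$ are reciprocal homeomorphisms. Then $\psi(\phi(x)) = x$ immediately reads $f(x)\cdot g(p(x)) = x$ for all $x\in E$, which is the first clause of $(\ast)$. The equality $\phi(\psi(b,y)) = (b,y)$, projected to the $B$-factor, gives $p(y\cdot g(b)) = b$, which is the second clause.

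The computation is short and essentially formal; I do not expect a genuine obstacle. The only point that requires a small argument beyond substitution is the derivation of $f(y\cdot g(b)) = y$ in the ``if'' direction, and this rests exactly on the group-action cancellation that is built into the assumption that $G$ is a (topological) group acting on $E$. Everything else is bookkeeping.
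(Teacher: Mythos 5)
Your proof is correct and follows essentially the same route as the paper: compute $\psi\phi$ and $\phi\psi$ directly, and derive $f(y\cdot g(b))=y$ from the first clause of $(\ast)$ applied to $x=y\cdot g(b)$ together with the second clause (the paper writes this as right-multiplication by $g(b)^{-1}$, you phrase it as cancellation of the group action — the same step). The only difference is that you spell out the continuity remark and the ``only if'' direction, which the paper dismisses as obvious.
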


\begin{proof} 
From the definition of the maps $\phi$ and $\psi$, we have the next identities: 
$$\begin{array}[t]{l}
\psi \phi(x) = \psi(p(x), f(x)) = f(x) \cdot g(p(x)). \\[2mm]  
\phi\psi(b, y) = \phi(y \cdot g(b)) = (p(y \cdot g(b)), f(y \cdot g(b))).
\end{array}$$ 
The condition $(\ast)$ implies that $\psi \phi(x) = x$ and 
$\phi\psi(b, y) = (b, f(y \cdot g(b))) = (b, y)$, since 
$$f(y \cdot g(b)) = (y \cdot g(b)) \cdot g(p(y \cdot g(b)))^{-1} = (y \cdot g(b)) \cdot g(b)^{-1} = y.$$ 
This means that $\psi = \phi^{-1}$. 
The converse is obvious. 
\end{proof} 

\begin{compl}\label{complement_factorization}
In addition, if (a) the maps $p$, $f$ and $g$ are maps of pairs 
$$\mbox{$p : (E, E_1) \to (B, B_1)$, $f : (E, E_1) \to (F, F_1)$ \ and \ $g : (B, B_1) \to (G, G_1)$,}$$ 
\begin{itemize}
\item[(b)] $F_1 \subset E_1$, and (c) 
$G_1$ is a subgroup of $G$ such that $E_1$ is $G_1$-invariant (i.e., $E_1 \cdot G_1 \subset E_1$), 
\end{itemize} 
then the maps $\phi$ and $\psi$ induce the  maps of pairs 
$$\mbox{$\phi : (E, E_1) \to (B, B_1) \times (F, F_1)$ \ \ and \ \ 
$\psi : (B, B_1) \times (F, F_1) \to (E, E_1)$.}$$  
By Lemma ~\ref{lem_factorization}, if the maps $p$, $f$ and $g$ satisfy the condition $(\ast)$, then 
the maps $\phi$ and $\psi$ are reciplocal homeomorphisms of pairs 
(and $F_1 = F \cap E_1$). 
\end{compl}  

The next lemma is the simplest case of Complement~\ref{complement_factorization}. 

\begin{lemma}\label{lem_special_case} 
Suppose the maps $p : E \to B$, $f : E \to F$ and $g : B \to G$ satisfy the condition $(\ast)$, so that 
the map $\phi : E \to B \times F$ is a homeomorphism. 
If $E'$ is a $G$-invariant subspace of $E$, then $\phi(E') = B \times (E' \cap F)$. 
\end{lemma}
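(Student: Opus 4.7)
The plan is to establish both inclusions of the claimed equality $\phi(E') = B \times (E' \cap F)$ using the explicit formulas for $\phi$ and its inverse $\psi = \phi^{-1}$ already supplied by Lemma~\ref{lem_factorization}, together with the $G$-invariance of $E'$.

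The key preliminary step is to rewrite the first half of condition $(\ast)$ as $f(x) = x \cdot g(p(x))^{-1}$ in the group $G$, which exhibits $f(x)$ in the $G$-orbit of $x$. With that identity in hand, the inclusion $\phi(E') \subset B \times (E' \cap F)$ is immediate: for $x \in E'$ the coordinate $f(x)$ lies in $F$ by definition and in $E'$ by $G$-invariance, so $\phi(x) = (p(x), f(x)) \in B \times (E' \cap F)$. The reverse inclusion is equally short: given $(b, y) \in B \times (E' \cap F)$, one has $\psi(b, y) = y \cdot g(b) \in E'$ directly from $y \in E'$ and $G$-invariance, whence $(b, y) = \phi(\psi(b, y)) \in \phi(E')$.

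I anticipate no real obstacle; this lemma is simply the specialization of Complement~\ref{complement_factorization} to the choices $B_1 = B$, $E_1 = E'$, $F_1 = E' \cap F$, $G_1 = G$, under which the pair-compatibility hypotheses $F_1 \subset E_1$ and $E_1 \cdot G_1 \subset E_1$ reduce to a tautology and the single standing assumption of the present lemma, and the concluding identity $F_1 = F \cap E_1$ of the Complement becomes exactly the assertion to be proved. The whole argument is essentially algebraic bookkeeping built on top of the inversion formula already extracted from $(\ast)$.
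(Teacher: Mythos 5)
Your proof is correct and follows essentially the same route as the paper: both reduce the statement to Complement~\ref{complement_factorization} with $(E_1, B_1, F_1, G_1) = (E', B, E' \cap F, G)$ and verify the one nontrivial hypothesis via the identity $f(x) = x \cdot g(p(x))^{-1}$ together with $G$-invariance of $E'$. Spelling out the two inclusions explicitly, as you do, is just an unwinding of the Complement's conclusion.
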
 

\begin{proof} In Complement~\ref{complement_factorization} we can take $(E_1, B_1, F_1, G_1) = (E', B, E' \cap F, G)$. 
For the condition $f(E_1) \subset F_1$ note that 
$f(x) = x \cdot g(p(x))^{-1} \in E_1 \cdot G = E_1$ for $x \in E_1$. 
\end{proof} 

\subsection{Transformation groups on non-compact spaces} \mbox{}

A transformation group on a space $M$ is a topological group $G$ which acts on $M$ continuously and faithfully.  
Each $g \in G$ induces a homeomorphism $\widehat{g}$ of $M$. 
For a subset $H$ of $G$ and a subset $K$ of $M$, let $H_K = \{ h \in H \mid \widehat{h} = \id \text{ on } K\}$ and $H(K) = H_{M - K}$. 

A support function for a space ${\cal E}$ on $M$ is a function which assigns to each $f \in {\cal E}$ a closed subset ${\rm supp}\,f$ of $M$. 
When a space ${\cal E}$ is equipped with a support function on $M$, 
for any subspace ${\cal F}$ of ${\cal E}$ we obtain the subspace 
${\cal F}^c = \{ f \in {\cal F} \mid {\rm supp}\,f \text{ is compact} \}$. 
For the transformation group $G$ on $M$ 
the support of $g \in G$ is canonically defined by 
${\rm supp}\,g = {\rm supp}\,\widehat{g} \equiv cl_M\{ x \in M \mid \widehat{g}(x) \neq x \}.$ 
In this case $G^c$ is a normal subgroup of $G$. 

\begin{defi}
We say that 
\begin{itemize}
\item[$(\ast_1)$] $G$ has a weak topology if for each neighborhood $U$ of $e$ in $G$ there exists a compact subset $K$ of $M$ such that $G_K \subset U$, 
\item[$(\ast_2)$] 
$G$ has the multiplication supported by 
a discrete family $\{ E_i \}_{i \in \Lambda}$ of compact subsets in $M$
if for any $(g_i)_i \in \prod_{i \in \Lambda} G(E_i)$ 
there exists a $g \in G(\cup_i E_i)$ such that 
$\widehat{g} = \widehat{\,g_i\,}$ on $E_i$ for each $i \in \Lambda$. 
The element $g$ is denoted by 
$\prod_{i \in \Lambda} g_i$.  
\end{itemize} 
\end{defi} 

\begin{remark} Suppose $G$ has the multiplication supported by 
a discrete family $\{ E_i \}_{i \in \Lambda}$ of compact subsets in $M$. 

(1) Since the action of $G$ on $M$ is faithful, each $g \in G$ is uniquely determined by $\widehat{g}$. Thus the element $\prod_{i \in \Lambda} g_i$ is uniquely determined by the defining property. 

(2) The multiplication map \ 
$$\mbox{
$\eta : \prod_{i \in \Lambda} G(E_i) \to G(\cup_i E_i)$; \ 
$\eta((g_i)_i) = \prod_{i \in \Lambda} g_i$ \ 
}$$ 
is a group homomorphism and 
$\eta^{-1}(G^c(\cup_i E_i)) = \sum_{i \in \Lambda} G(E_i)$. 

(3) Any disjoint partition $\Lambda = \Lambda_0 \cup \Lambda_1$ yields 
the product decomposition 
$\prod_{i \in \Lambda} G(E_i) = \prod_{i \in \Lambda_0} G(E_i) \times \prod_{i \in \Lambda_1} G(E_i)$, by which 
the group $\prod_{i \in \Lambda_0} G(E_i)$ is regarded as 
a subgroup of $\prod_{i \in \Lambda} G(E_i)$. 
Thus, for any $(g_i)_{i \in \Lambda_0} \in \prod_{i \in \Lambda_0} G(E_i)$ 
we obtain the product 
$\prod_{i \in \Lambda_0} g_i \in G(\cup_{i \in \Lambda_0} E_i)$.
When $\Lambda_0$ is a finite subset, 
the element $\prod_{i \in \Lambda_0} g_i$ 
coincides with the usual product of $g_i$'s in $G$, which is independent of the order of $g_i$'s. 
\end{remark}

\begin{lemma}\label{lem_comp} 
If $G$ has a weak topology and 
has the multiplication supported by 
a discrete family $\{ E_i \}_{i \in \Lambda}$ of compact subsets in $M$, then 
the multiplication map $\eta$ is continuous. 
\end{lemma}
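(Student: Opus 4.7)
The plan is to exploit that $\eta$ is a group homomorphism (Remark~(2)) between topological groups, reducing the problem to continuity at the identity element $(e)_i$ of $\prod_{i \in \Lambda} G(E_i)$.

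Given an open neighborhood $U$ of $e$ in $G$, I would first produce, in order: an open neighborhood $V$ of $e$ in $G$ with $V \cdot V \subset U$ (continuity of multiplication in $G$); a compact subset $K \subset M$ with $G_K \subset V$ (weak topology hypothesis); the finite index set $\Lambda_0 := \{\, i \in \Lambda : E_i \cap K \neq \emptyset \,\}$, which is finite because the family $\{E_i\}$ is discrete; and open neighborhoods $V_j$ of $e$ in $G(E_j)$ for $j \in \Lambda_0$ such that $\prod_{j \in \Lambda_0} V_j \subset V$ as a finite product in $G$, using continuity of the finite-product map $\prod_{j \in \Lambda_0} G(E_j) \to G$.

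The heart of the argument is the factorization identity
\[
\eta\big((g_i)_{i \in \Lambda}\big) \;=\; \Big(\prod_{j \in \Lambda_0} g_j\Big) \cdot \eta\big((g_i)_{i \notin \Lambda_0}\big) \qquad \text{in } G,
\]
where the first factor is an ordinary finite product in $G$ and the two factors have disjoint supports inside $\bigcup_{j \in \Lambda_0} E_j$ and $\bigcup_{i \notin \Lambda_0} E_i$ respectively. I would verify this by comparing the induced homeomorphisms on each $E_\ell$ separately and invoking faithfulness of the action: for $\ell \in \Lambda_0$ the second factor acts as the identity on $E_\ell$ while the first restricts to $\widehat{g_\ell}$ (disjoint-support pieces commute and the other $g_j$'s fix $E_\ell$); for $\ell \notin \Lambda_0$ one uses $\widehat{g_\ell}(E_\ell) = E_\ell$ (because ${\rm supp}\,g_\ell \subset E_\ell$ and $\widehat{g_\ell}$ is a homeomorphism) so that the first factor, supported off $E_\ell$, acts trivially after post-composition with the second.

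Granting the identity, the conclusion is immediate for the basic open neighborhood $W := \prod_{j \in \Lambda_0} V_j \times \prod_{i \notin \Lambda_0} G(E_i)$ of $(e)_i$: for any $(g_i)_i \in W$ the first factor of $\eta((g_i)_i)$ lies in $V$ by construction of the $V_j$, while the second factor is supported in $\bigcup_{i \notin \Lambda_0} E_i$, which is disjoint from $K$, and hence lies in $G_K \subset V$; therefore $\eta((g_i)_i) \in V \cdot V \subset U$. I expect the factorization identity to be the only nontrivial step, being the only place where the \emph{discrete}-support structure genuinely enters; everything else is routine topological-group bookkeeping.
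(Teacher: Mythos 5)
Your proof is correct and follows essentially the same route as the paper's: reduce to continuity at the identity via the homomorphism property, split $\Lambda$ into the finite set of indices whose $E_i$ meet $K$ and the rest, handle the finite part by continuity of finite multiplication and the infinite part by the inclusion $G(\cup_{i\notin\Lambda_0}E_i)\subset G_K\subset V$, and combine via $V\cdot V\subset U$. The factorization identity you verify by hand is exactly what the paper extracts from its Remark that $\eta$ is a group homomorphism compatible with the partition $\Lambda=\Lambda_0\cup\Lambda_1$, so your extra verification is harmless but not a different argument.
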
 

\begin{proof} Since $\eta$ is a group homomorphism between topological groups, 
it suffices to show that the map $\eta$ is continuous at the unit element $e_\Lambda = (e)_i$ of 
$\prod_i G(E_i)$. 
Given any neighborhood $U$ of $e$ in $G$, 
there exists a neighborhood $V$ of $e$ in $G$ and 
a compact subset $K$ of $M$ such that 
$V^2 \subset U$ and $G_K \subset V$. 
Since $\{ E_i \}_i$ is discrete, there exists a finite subset $\Lambda_0$ of $\Lambda$ such that $E_i \cap K = \emptyset$ for any $i \in \Lambda_1 \equiv \Lambda - \Lambda_0$. 
This partition induces the decomposition $\prod_{i \in \Lambda} G(E_i) = \prod_{i \in \Lambda_0} G(E_i) \times \prod_{i \in \Lambda_1} G(E_i)$. 

Since the finite multiplication map 
$$\mbox{$\eta_0 : \prod_{i \in \Lambda_0} G(E_i) \to G$; \ $\eta_0((g_i)_i) = \prod_{i \in \Lambda_0}g_i$}$$ 
is continuous, there exists a neighborhood $W_0$ of the unit element $e_{\Lambda_0}$ in $\prod_{i \in \Lambda_0} G(E_i)$ such that $\eta_0(W_0) \subset V$. 
Then, $W = W_0 \times \prod_{i \in \Lambda_1} G(E_i)$ is a neighborhood of $e_\Lambda$ in $\prod_{i \in \Lambda} G(E_i)$ and 
for any $g_\Lambda = (g_{\Lambda_0}, g_{\Lambda_1}) \in W$ it follows that 
(a) $g_{\Lambda_0} \in W_0$ and $\eta(g_{\Lambda_0}) = 
\eta_0(g_{\Lambda_0}) \in V$, 
(b) $\eta(g_{\Lambda_1}) \in G(\cup_{i \in \Lambda_1} E_i) \subset G_K \subset V$, so that 
$\eta(g_\Lambda) = \eta(g_{\Lambda_0}) \eta(g_{\Lambda_1}) \in V^2 \subset U$. 
This completes the proof. 
\end{proof} 

%%%%%%%%%%%

\subsection{Factorization of $(G, G^c)$-pairs} \mbox{}

In this subsection we incorporate the arguments in the previous subsections and deduce a practical criterion, Proposition~\ref{prop_stability}, which is used in Sections 4 and 5. Now consider the following data: 

\begin{assumption}\label{assump_stability} \mbox{} 
\begin{itemize}
\item[(1)] $M$ is a space, 
$\{ E_i \}_{i \in \Lambda}$ is a discrete family of compact subsets in $M$ 
and $D_i$ is a compact subset of $E_i$ for each $i \in \Lambda$.

\item[(2)] ${\cal G}$ is a transformation group on $M$ which has a weak topology and has the multiplication supported by the family $\{ E_i \}_{i \in \Lambda}$. 

\item[(3)] $({\cal E}, f_0)$ is a pointed space equipped with a support function on $M$ and a continuous  right action of ${\cal G}$. Suppose that 
\begin{itemize} 
\item[(i)] ${\rm supp}\,f_0 = \emptyset$ \ and \ (ii)\, ${\rm supp}\,fg \subset {\rm supp}\,f \cup {\rm supp}\,g$ \ for any $(f, g) \in {\cal E} \times {\cal G}$.
\end{itemize} 

\item[(4)] For each $i \in \Lambda$ 
(a) $(B_i, \alpha_i)$ is a pointed space and 
\begin{itemize}
\item[(b)] $P_i : ({\cal E}, f_0) \to (B_i, \alpha_i)$ and $G_i : (B_i, \alpha_i) \to ({\cal G}(E_i), e)$ are pointed maps. 
\end{itemize} 
Assume that these maps satisfy the following conditions. 
\begin{itemize} 
\item[(i)\ ] $P_i(f) = \alpha_i$ if $({\rm supp}\,f) \cap D_i = \emptyset$, 
\item[(ii)\,] $P_i(fg^{-1}) = \alpha_i$ for any $(f, g) \in {\cal E} \times {\cal G}$ with $\widehat{g} = \widehat{G_i(P_i(f))}$ on $D_i$. 
\item[(iii)] $P_i(fg) = \nu$ for any $(f, g, \nu) \in {\cal E} \times {\cal G} \times B_i$ with $P_i(f) = \alpha_i$ and $\widehat{g} = \widehat{G_i(\nu)}$ on $D_i$. 
\end{itemize} 
\end{itemize} 
\end{assumption}

Assumption 3.1 yields the following conclusions; 
By (2) and Lemma~\ref{lem_comp} the multiplication map 
$\eta : \prod_{i \in \Lambda} {\cal G}(E_i) \to {\cal G}(\cup_i E_i)$ is continuous. 
By (3)(ii) the subspace ${\cal E}^c$ is ${\cal G}^c$-invariant (i.e., ${\cal E}^c \cdot {\cal G}^c = {\cal E}^c$). 
For simplicity, we use the symbol 
$$\mbox{$({\cal B}, {\cal B}^c, \alpha) = \big(\prod_{i \in \Lambda} B_i, \sum_{i \in \Lambda} B_i, (\alpha_i)_i \big).$}$$ 
Recall that  
$\sum_{i \in \Lambda} B_i = \big\{ (\mu_i)_i \in \prod_{i \in \Lambda} B_i \mid 
\mu_i = \alpha_i \mbox{ except finitely many } i \in \Lambda \big\}$.
The maps $P_i$, $G_i$ $(i \in \Lambda)$ in (4) are combined to yield the following  maps between pointed pairs: 
\begin{itemize} 
\item[(5)] 
$P : ({\cal E}, {\cal E}^c, f_0) \to ({\cal B}, {\cal B}^c, \alpha)$, \ 
$G : ({\cal B}, {\cal B}^c, \alpha) \to ({\cal G}, {\cal G}^c, e)$ \ and \ 
$F : ({\cal E}, {\cal E}^c, f_0) \to ({\cal F}, {\cal F}^c, f_0)$. 
\end{itemize} 
These maps are defined by the formula: 
$$\begin{array}[t]{c}
P(f) = (P_i(f))_{i \in \Lambda} \ \ (f \in {\cal E}), \hspace{8mm} 
G(\mu) = \eta\big((G_i(\mu_i))_i\big) \ \ (\mu = (\mu_i)_i \in {\cal B}), \\[2mm] 
{\cal F} = P^{-1}(\alpha) \subset {\cal E}, \hspace{6mm} 
F(f) = f \cdot G(P(f))^{-1} \ \ (f \in {\cal E}).
\end{array}$$  

If $f \in {\cal E}^c$, then the compact set ${\rm supp}\,f$ meets only finitely many $D_i$'s. Thus by (4)(i) $P_i(f) = \alpha_i$ except finitely many $i$'s and so  
$P(f) \in  {\cal B}^c$. 
Since $G_i(\alpha_i) = e$ and 
$\eta\big(\sum_{i \in \Lambda} {\cal G}(E_i)\big) \subset {\cal G}^c(\cup_i E_i)$, we have $G({\cal B}^c) \subset {\cal G}^c$. 
For each $i \in \Lambda$, since $\widehat{G(P(f))} = \widehat{G_i(P_i(f))}$ on $D_i$, the condition (4)(ii) implies that 
$$P_i(F(f)) = P_i\big(f \cdot G(P(f))^{-1}\big) = \alpha_i.$$ 
This means that $PF(f) = \alpha$ and $F(f) \in {\cal F}$. 
If $f \in {\cal E}^c$, then $G(P(f)) \in {\cal G}^c$ and 
$F(f) \in {\cal E}^c \cdot {\cal G}^c = {\cal E}^c$.  
This implies that $F(f) \in {\cal F}^c$. 

The maps $P$, $F$ and $G$ determine two maps 
\begin{itemize}
\item[(6)] $\Phi : ({\cal E}, {\cal E}^c) \lra ({\cal B}, {\cal B}^c) \times ({\cal F}, {\cal F}^c)$ \ \ and \ \ 
$\Psi : ({\cal B}, {\cal B}^c) \times ({\cal F}, {\cal F}^c) \lra ({\cal E}, {\cal E}^c)$. 
\end{itemize}
These maps are defined by \ $\Phi(f) = (P(f), F(f))$ \ and \ $\Psi(\mu, h) = h \cdot G(\mu)$.  

\begin{prop}\label{prop_stability} 
{\rm (i)} The maps $\Phi$ and $\Psi$ are reciplocal homeomorphisms of pairs.

{\rm (ii)} {\rm (a)} Suppose $({\cal E}_1, {\cal E}_2)$ is a subpair of $({\cal E}, {\cal E}^c)$ and  
${\cal E}_1$ is ${\cal G}$-invariant and ${\cal E}_2$ is ${\cal G}^c$-invariant respectively. 
Then the homeomorphism $\Phi$ restricts to the homeomorphism of the subpairs 
$$\Phi : ({\cal E}_1, {\cal E}_2) \lra ({\cal B}, {\cal B}^c) \times ({\cal E}_1 \cap {\cal F}, {\cal E}_2 \cap {\cal F}).$$ 

{\rm (b)} In particular, if ${\cal E}_1$ is a ${\cal G}$-invariant subspace of ${\cal E}$, then 
the homeomorphism $\Phi$ induces the homeomorphism of the subpairs 
$$\Phi : ({\cal E}_1, {\cal E}_1^c) \lra ({\cal B}, {\cal B}^c) \times ({\cal E}_1 \cap {\cal F}, {\cal E}_1^c \cap {\cal F}).$$ 

\end{prop}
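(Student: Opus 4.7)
The plan is to apply Lemma~\ref{lem_factorization}, extended as in Complement~\ref{complement_factorization}, essentially verbatim, since the maps $\Phi$ and $\Psi$ are built out of $P$, $F$, $G$ in precisely the algebraic shape considered there. The only real work is to verify the condition $(\ast)$ of Lemma~\ref{lem_factorization} and to confirm that $P$, $F$, $G$ are continuous maps of pairs in the required sense; part (ii) will then drop out of Complement~\ref{complement_factorization} by choosing the right invariant subpairs.

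For condition $(\ast)$, the identity $F(f)\cdot G(P(f)) = f$ is tautological from the definition $F(f) = f \cdot G(P(f))^{-1}$. The substantive identity is $P(h \cdot G(\mu)) = \mu$ for $(\mu, h) \in {\cal B} \times {\cal F}$, which I verify coordinatewise. Fix $i \in \Lambda$. Since $h \in {\cal F}$ we have $P_i(h) = \alpha_i$, while the multiplication structure of ${\cal G}$ supported by $\{E_i\}$ forces $\widehat{G(\mu)} = \widehat{G_i(\mu_i)}$ on $E_i \supset D_i$ (the remaining factors $G_j(\mu_j)$ with $j \neq i$ are supported in $E_j$, which is disjoint from $E_i$). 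Condition (4)(iii) applied with $f = h$, $g = G(\mu)$, $\nu = \mu_i$ then yields $P_i(h \cdot G(\mu)) = \mu_i$, as required.

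Continuity of $\Phi$ and $\Psi$ reduces to continuity of $P$, $F$, $G$. Continuity of $P = (P_i)_i$ is coordinatewise from assumption (4)(b); continuity of $F$ follows because $F(f) = f \cdot G(P(f))^{-1}$ is a composition of continuous operations, including the right action of ${\cal G}$ on ${\cal E}$; continuity of $G = \eta \circ (G_i)_i$ rests on Lemma~\ref{lem_comp}, which is precisely what makes the infinite multiplication map $\eta$ continuous. The pair-compatibility of $\Phi$ is already spelled out in the text preceding the statement: (4)(i) together with discreteness of $\{E_i\}$ forces $P({\cal E}^c) \subset {\cal B}^c$; $G_i(\alpha_i) = e$ together with $\eta(\sum_i {\cal G}(E_i)) \subset {\cal G}^c$ gives $G({\cal B}^c) \subset {\cal G}^c$; and $F({\cal E}^c) \subset {\cal F}^c$ follows from (3)(ii). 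For the converse inclusion, if $P(f) \in {\cal B}^c$ and $F(f) \in {\cal F}^c$, then $f = F(f) \cdot G(P(f))$ has support inside ${\rm supp}\,F(f) \cup {\rm supp}\,G(P(f))$, which is compact, so $f \in {\cal E}^c$; hence $\Phi$ is genuinely a homeomorphism of pairs.

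For part (ii)(a), I apply Complement~\ref{complement_factorization} to the subpair $({\cal E}_1, {\cal E}_2)$: the ${\cal G}$-invariance of ${\cal E}_1$ gives $F({\cal E}_1) \subset {\cal E}_1$ (since $F(f) = f \cdot G(P(f))^{-1} \in {\cal E}_1 \cdot {\cal G}$), while for $f \in {\cal E}_2 \subset {\cal E}^c$ we have $G(P(f)) \in {\cal G}^c$, so the ${\cal G}^c$-invariance of ${\cal E}_2$ yields $F({\cal E}_2) \subset {\cal E}_2$; the $\Psi$-direction is analogous. Part (ii)(b) is the special case ${\cal E}_2 = {\cal E}_1^c$, which is automatically ${\cal G}^c$-invariant thanks to (3)(ii). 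The main subtlety throughout is the coordinatewise use of (4)(iii) to pin down $P(h \cdot G(\mu)) = \mu$, since this is where the local compatibility axioms on $P_i$, $G_i$, $D_i$, $E_i$ meet the global multiplication structure of ${\cal G}$; everything else is bookkeeping that follows Complement~\ref{complement_factorization}.
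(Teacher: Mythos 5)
Your proposal is correct and follows essentially the same route as the paper: reduce to Lemma~\ref{lem_factorization} via Complement~\ref{complement_factorization}, note that $F(f)\cdot G(P(f))=f$ is definitional, and verify $P(h\cdot G(\mu))=\mu$ coordinatewise from (4)(iii) using $P_i(h)=\alpha_i$ and $\widehat{G(\mu)}=\widehat{G_i(\mu_i)}$ on $D_i$, with (ii) obtained by feeding the invariant subpairs back into the Complement. The extra details you supply (continuity via Lemma~\ref{lem_comp}, the support argument for $\Psi({\cal B}^c\times{\cal F}^c)\subset{\cal E}^c$) are exactly the facts the paper records in the discussion preceding the proposition.
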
 

\begin{proof} 
(i) By Complement~\ref{complement_factorization} 
it suffices to verify the following conditions:   
$$\mbox{$(\ast_1)$ \ $F(f) \cdot G(P(f)) = f$ \ for any $f \in {\cal E}$, \ \ 
$(\ast_2)$ $P(h \circ G(\mu)) = \mu$ \ for any $(\mu, h) \in {\cal B} \times {\cal F}$.}$$

The condition $(\ast_1)$ follows from the definition of the map $F$. 
Since $P(h) = \alpha$ and $\widehat{G(\mu)} = \widehat{G_i(\mu_i)}$ on $D_i$, 
by (4)(iii) it follows that $P_i(h \circ G(\mu)) = \mu_i$. 
This implies the condition $(\ast_2)$. 

(ii) (a) By the conditions on $({\cal E}_1, {\cal E}_2)$ the map $F$ induces the map between subpairs, $F : ({\cal E}_1, {\cal E}_2) \to  ({\cal E}_1 \cap {\cal F}, {\cal E}_2 \cap {\cal F})$. 
Thus the assertion follows from (i) and Complement~\ref{complement_factorization}.  

(b) Since ${\cal E}_1^c$ is ${\cal G}^c$-invariant, the statement follows from (a). 
\end{proof}

%%%%%%%%%%%%

\section{Stability property of diffeomorphism groups} 

In this section we study the stability proeprty of diffeomorphism groups (Theorem~\ref{thm_stability}\,(1)) 
and prove Theorem~\ref{thm_main}\,(1). 

\subsection{Preliminaries on volume forms and volume densities} \mbox{}

Suppose $M$ is a smooth (separable metrizable) $n$-manifold possibly with boundary. 
When $M$ is orientable, the volume forms on $M$ serves our purpose. 
However, to include the non-orientable case, 
it is necessary to recall the notion of volume density.

Suppose $V$ is a 1-dimensional real vector space. 
The dual space $V^\ast$ consists of 
all linear functions $f : V \to {\Bbb R}$, while  
its variant $V^{\ssp}$ is defined by 
$$V^{\ssp} = \{ f \in V \to {\Bbb R} \mid f(av) = |a| f(v) \ (\forall v \in V, \forall a \in {\Bbb R})\}
= \{ \pm |f| \mid f \in V^\ast \}.$$ 
These spaces form 1-dimensional real vector spaces under the usual sum and scalor product of real-valued functions. 
When $V$ is oriented, by $V_+$ we denote 
the connected component of $V - \{ 0 \}$ consisting of positive vectors. 
Even if $V$ itself is not oriented, 
the space $V^{\ssp}$ always admits a canonical orientation with the positive vectors 
$V^{\ssp}_+ = \{ |f| \mid f \in V^\ast \}$. 
In addition, if $V$ is oriented, then $V^\ast$ admits the corresponding orientation and  a canonical orientation-preserving isomorphism $V^\ast \cong V^{\ssp}$. 

This construction extends to real line bundles. 
For any smooth real line bundle $L \to M$,  
we have the associated real line bundles $L^\ast = \cup_{p \in M} (E_p)^\ast \to M$ and 
$L^{\ssp} = \cup_{p \in M} (L_p)^{\ssp} \to M$. 
The line bundle $L^{\ssp}$ has a canonical orientation, so that 
it is trivial since $M$ is paracompact. 
If $L$ itself is oriented 
(i.e., each fiber $L_p$ $(p \in M)$ is equipped with an orientation $o_p$ which varies continuously in $p \in M$), 
then $L^\ast$ also admits the corresponding orientation and 
there exists a canonical isomorphism $L^\ast \cong L^{\ssp}$ of oriented vector bundles over $M$. 

For a smooth manifold $N$ possibly with boundary, 
let $C^\infty(M, N)$ denote the space of $C^\infty$ maps $f : M \to N$.
More generally, for a smooth fiber bundle $E \to M$, 
let $\Gamma(E)$ denote the space of global sections of $E$.  
These spaces are endowed with the compact-open $C^\infty$-topology. 
Note that $\Gamma(M \times N) \cong C^\infty(M, N)$ for the product bundle $M \times N \to M$. 
For an oriented real line bundle $L \to M$, the subspace of positive sections of $L$ is defined by 
$$\Gamma_+(L) = \{ s \in \Gamma(L) \mid s(p) \in (L_p)_+ \ (\forall p \in M) \}.$$ 

\begin{lemma}\label{lem_ell_2} 
$\Gamma_+(M; L) \cong \ell_2$. 
\end{lemma}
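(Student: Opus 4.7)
The plan is to reduce the claim to the Anderson--Kadec theorem, which asserts that every separable infinite-dimensional Fr\'echet space is homeomorphic to $\ell_2$. The strategy has two stages: first trivialize $L$ using a global positive section, then apply the logarithm to identify the positive cone with the whole function space.

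First I would construct a nowhere-vanishing positive smooth section $s_0 \in \Gamma_+(L)$. Over each set $U_i$ of a locally finite trivializing open cover $\{U_i\}$ of $M$, the orientation of $L$ supplies a smooth positive section $\tau_i \in \Gamma_+(L|_{U_i})$. Choosing a subordinate smooth partition of unity $\{\lambda_i\}$, the section $s_0 = \sum_i \lambda_i \tau_i$ is positive at each $p \in M$, since at $p$ it is a nonnegative, not-all-zero combination of values lying in the oriented half-line $(L_p)_+$, which is a convex cone. The section $s_0$ then yields a smooth oriented bundle isomorphism
\[
\Phi : M \times {\Bbb R} \lra L, \qquad \Phi(p, a) = a\,s_0(p),
\]
carrying $M \times (0, \infty)$ fiberwise onto the positive cone of $L$. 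Pushing sections through $\Phi$ furnishes a homeomorphism $\Gamma_+(L) \cong C^\infty(M, (0, \infty))$, where the target sits inside $C^\infty(M, {\Bbb R})$ with the compact-open $C^\infty$-topology.

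Next, the smooth diffeomorphism $\log : (0, \infty) \to {\Bbb R}$ induces a homeomorphism $C^\infty(M, (0, \infty)) \cong C^\infty(M, {\Bbb R})$ by post-composition, with inverse given by post-composition with $\exp$; this is a standard functoriality of the compact-open $C^\infty$-topology under smooth diffeomorphisms of the target. Finally, $C^\infty(M, {\Bbb R})$ is a separable Fr\'echet space, its topology being defined by a countable family of $C^k$-seminorms taken over a compact exhaustion of $M$, and it is infinite-dimensional since $M$ is an infinite second-countable manifold. The Anderson--Kadec theorem then gives $C^\infty(M, {\Bbb R}) \cong \ell_2$, completing the chain of homeomorphisms. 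I do not anticipate a serious obstacle; the only step that is not purely formal is the construction of $s_0$, which rests on paracompactness and the convexity of the positive half-line in each oriented fiber.
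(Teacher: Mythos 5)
Your proposal is correct and follows essentially the same route as the paper: the paper likewise notes that the oriented line bundle is trivial (over a paracompact base), identifies $\Gamma_+(L) = \Gamma(L_+) \cong C^\infty(M, {\Bbb R})$ via the fiber identification $(0,\infty) \cong {\Bbb R}$, and concludes by the Fr\'echet-space characterization of $\ell_2$. You have merely made explicit the two steps the paper leaves implicit, namely the partition-of-unity construction of the trivializing positive section and the $\log$/$\exp$ homeomorphism.
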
 

\begin{proof} 
The trivial fiber bundle $L$ includes the sub-bundle $L_+ = \cup_{p \in M} (L_p)_+$, which is also a trivial fiber bundle with fiber $(0, \infty) \cong {\Bbb R}$. 
Since $\Gamma_+(L) = \Gamma(L_+) \cong C^\infty(M, {\Bbb R})$ and 
the latter is an infinite-dimensional separable Frechet space, we have the conclusion. 
\end{proof} 

Now we apply the above arguments to the line bundle $\wedge^n T M$.  
Any section $\omega \in \Gamma((\wedge^n T M)^{\ssp})$ is called a density on $M$, 
\vspace{1mm} 
since its components over coordinate charts transform by 
the absolute value of Jacobian under coordinate transitions and hence 
the integral $\ds \int_M \omega \in {\Bbb R}$ is well-defined whenever $\omega$ has a compact support and 
\vspace{1mm} 
the $\omega$-volume $\ds \omega(M) = \int_M \omega \in (0, \infty]$ is defined  as an improper integral for any positive density $\omega \in \Gamma_+((\wedge^n T M)^{\ssp})$. 
To simplify the notations, let 
$$\mbox{${\cal V}^{\ssp}_{(+)}(M) = \Gamma_{(+)}((\wedge^n T M)^{\ssp})$ \ \  and \ \ 
${\cal V}^{\ssp}_+(M; m) = \{ \omega \in {\cal V}^{\ssp}_+(M) \mid \omega(M) = m \}$ \ $(m \in (0, \infty])$. }$$ 

Suppose $N$ is another smooth $n$-manifold possibly with boundary and 
$f : N \to M$ is a $C^\infty$ map. 
Then the differential of $f$, $df : TN \to TM$, induces the pull-back 
$f^\ast\omega \in {\cal V}^{\ssp}(N)$ for each 
$\omega \in {\cal V}^{\ssp}(M)$. 
This defines a continuous map 
$$f^\ast : {\cal V}^{\ssp}(M) \to {\cal V}^{\ssp}(N).$$
Moreover, if $f$ is an immersion, then $f^\ast({\cal V}^{\ssp}_+(M)) \subset {\cal V}^{\ssp}_+(N)$. 
It is seen that the group ${\cal D}(M)$ acts continuously on the space ${\cal V}^{\ssp}(M)$ by the pull-back and the subspace ${\cal V}^{\ssp}_+(M)$ is invariant under this action. 
For the inclusion $i : N \subset M$, the pull-back $i^\ast \omega$ is also denoted by $\omega|_N$. 

For $\mu, \nu \in {\cal V}^{\ssp}(M)$ we write $\mu \sim_1 \nu$ if $\nu = c \mu$ for some $c > 0$. 
This is an equivalence relation and preserved by the pull-back. 

When $M$ is oriented  (i.e., $\wedge^n T M$ is oriented), 
the line bundle $(\wedge^n T M)^\ast$ also admits a canonical orientation and 
there exist canonical isomorphisms of oriented vector bundles 
$$(\wedge^n T M)^{\ssp} \cong (\wedge^n T M)^\ast \cong \wedge^n T^\ast M.$$  
This enables us to identify the space 
${\cal V}^{\ssp}_+(M)$  
with the space ${\cal V}_+(M) = \Gamma_+(\wedge^n T^\ast M)$ 
of positive volume forms on $M$. 
The induced homeomorphism $\eta_M :{\cal V}^{\ssp}_+(M) \cong {\cal V}_+(M)$ is compatible with the pull-back, that is, 
for any orientation-preserving diffeomorphism $h \in {\cal D}_+(M)$ we have the commutative diagram: 
$$\xymatrix@M+3pt{
{\cal V}^{\ssp}_+(M) \ar[d]^{\cong}_{\mbox{$\eta_M$}} 
\ar[r]_{\cong}^{\mbox{$h^\ast$}} &  
{\cal V}^{\ssp}_+(M) \ar[d]_{\cong}^{\mbox{$\eta_M$}} \\
{\cal V}_+(M) \ar[r]_{\cong}^{\mbox{$h^\ast$}} & {\cal V}_+(M)
}$$ 

Suppose $F$ is a compact smooth $n$-manifold possibly with boundary. 
\vspace{1mm} 
For $\mu \in {\cal V}^{\ssp}_+(F)$ we define $\ds \overline{\mu} = \frac{1}{\mu(F)}\mu \in {\cal V}^{\ssp}_+(F; 1)$. Then, $\mu \sim_1 \nu$ iff $\overline{\mu} = \overline{\nu}$ for $\mu, \nu \in {\cal V}^{\ssp}_+(F)$.

\begin{lemma}\label{lem_convex}
${\cal V}^{\ssp}_+(F; 1) \cong \ell_2$.  
\end{lemma}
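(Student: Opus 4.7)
The plan is to realize ${\cal V}^{\ssp}_+(F;1)$ as a nonempty open convex subset of an infinite-dimensional separable Fr\'echet space, and then invoke the classical theorem identifying such subsets with $\ell_2$.

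First I would observe that the total space of densities ${\cal V}^{\ssp}(F) = \Gamma((\wedge^n TF)^{\ssp})$ is a separable infinite-dimensional Fr\'echet space. Indeed, the line bundle $(\wedge^n TF)^{\ssp}$ is trivial (by the canonical orientation discussion preceding Lemma~\ref{lem_ell_2}), so ${\cal V}^{\ssp}(F) \cong C^\infty(F, {\Bbb R})$ as topological vector spaces, and the latter is a separable Fr\'echet space under the compact-open $C^\infty$-topology. Since $F$ is compact, the volume functional $V : {\cal V}^{\ssp}(F) \to {\Bbb R}$, $V(\omega) = \int_F \omega$, is continuous and linear, so the affine hyperplane $H := V^{-1}(1)$ is closed and of codimension one, hence itself an infinite-dimensional separable Fr\'echet space in the induced topology.

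Next I would check that ${\cal V}^{\ssp}_+(F)$ is an open convex subset of ${\cal V}^{\ssp}(F)$: openness because strict positivity is preserved under sufficiently small $C^0$-perturbations on the compact manifold $F$ (any positive $\omega_0$ attains a positive minimum there), and convexity because pointwise sums and positive scalar multiples of positive densities remain positive. Consequently,
$${\cal V}^{\ssp}_+(F;1) = {\cal V}^{\ssp}_+(F) \cap H$$
is a nonempty open convex subset of the Fr\'echet space $H$, and thus is infinite-dimensional, separable, and completely metrizable.

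Finally I would invoke the Bessaga--Pe\l czy\'nski theorem (cf.\ also Keller and Klee) that every infinite-dimensional completely metrizable separable convex subset of a locally convex linear metric space is homeomorphic to $\ell_2$; applied to ${\cal V}^{\ssp}_+(F;1) \subset H$, this yields the desired homeomorphism. The main obstacle is pinning down the precise form of this classical theorem with a clean reference; once that is in hand, all remaining verifications are routine. An alternative, more hands-on route would use the homeomorphism ${\cal V}^{\ssp}_+(F) \cong {\cal V}^{\ssp}_+(F;1) \times (0,\infty)$ given by $\omega \mapsto (\omega/\omega(F), \omega(F))$, combined with Lemma~\ref{lem_ell_2} and Toru\'nczyk's characterization of $\ell_2$, but this is less direct than the convex-subset argument.
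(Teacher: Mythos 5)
Your argument is in substance the same as the paper's: both realize ${\cal V}^{\ssp}_+(F;1)$ as a convex, completely metrizable subset of a separable Fr\'echet space and then invoke the Klee/Bessaga--Pe\l czy\'nski/Dobrowolski--Toru\'nczyk classification of such convex sets (the paper cites \cite{DT} for exactly this). The one thing you must repair is your statement of that classification: as written (``every infinite-dimensional completely metrizable separable convex subset of a locally convex linear metric space is homeomorphic to $\ell_2$'') it is false --- the Hilbert cube is an infinite-dimensional compact convex subset of $s$. The indispensable hypothesis is that the convex set be \emph{not locally compact}. Fortunately your own openness observation supplies this for free: an open nonempty subset of an infinite-dimensional Fr\'echet space is nowhere locally compact, so ${\cal V}^{\ssp}_+(F;1)$, being open in the closed affine hyperplane $H=V^{-1}(1)$, satisfies the correct hypothesis; you just need to say so explicitly rather than listing only ``infinite-dimensional, separable, completely metrizable.'' The paper verifies non-local-compactness differently, via the splitting $\chi : {\cal V}^{\ssp}_+(F) \cong {\cal V}^{\ssp}_+(F;1)\times(0,\infty)$ together with Lemma~\ref{lem_ell_2} --- essentially the route you dismiss at the end as the less direct ``alternative.'' Either verification works; yours is slightly more self-contained, while the paper's reuses the homeomorphism $\chi$ it needs anyway.
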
 

\begin{proof} The space ${\cal V}^{\ssp}_+(F; 1)$ is a convex subset of the separable Fr\'echet space ${\cal V}^{\ssp}(F) = \Gamma((\wedge^n T F)^{\ssp})$, and 
it admits a canonical homeomorphism  
$$\chi : {\cal V}^{\ssp}_+(F) \cong {\cal V}^{\ssp}_+(F; 1) \times (0, \infty), \ \ \chi(\mu) = (\overline{\mu}, \mu(F)).$$ 
The inverse is given by $\chi^{-1}(\mu, c) = c \mu$. 
Since ${\cal V}^{\ssp}_+(F) = \Gamma_+((\wedge^n T F)^{\ssp}) \cong \ell_2$ by Lemma~\ref{lem_ell_2}, 
it follows that ${\cal V}^{\ssp}_+(F; 1)$ is nowhere locally compact. 
Hence, ${\cal V}^{\ssp}_+(F; 1) \cong \ell_2$ by \cite{DT}.  
\end{proof} 

In the next subsection we need a version of Moser's theorem for volume forms  (\cite[Theorem 2]{Mos} cf. \cite[Theorem 2.1]{Ya4}). 
Suppose $M$ is a smooth $n$-manifold possibly with boundary, 
$L \subset N$ are compact connected oriented smooth $n$-submanifolds of $M$ possibly with boundary such that $L \subset {\rm Int}\,N$ and $L$ inherits the orientation from $N$, 
and $\alpha \in {\cal V}^{\ssp}_+(L; 1)$. 
Consider the following subgroup of ${\cal D}(M)$: 
$${\cal G}(N, L) = \{ h \in {\cal D}_{M - {\rm Int}\,N}(M) \mid h(L) = L\}.$$ 
For any subgroup ${\cal H}$ of ${\cal D}(M)$, 
we define a subgroup ${\cal H}_1^{\natural}$ of ${\cal H}$ by  
$${\cal H}_1^{\natural} = \{ h \in {\cal H} \mid h : (\natural)_h \}: \hspace{5mm} 
(\natural)_h : \begin{array}[t]{l} 
\mbox{There exists a smooth isotopy $H : M \times [0,1] \to M$} \\[1.5mm] 
\mbox{such that $H_0 = \id_M, H_1 = h$ \ and \ $H_t \in {\cal H}$ \ $(\forall \,t \in [0,1])$.}  
\end{array}$$ 

\begin{thm}\label{thm_Moser} 
There exists a map $\phi : {\cal V}^{\ssp}_+(L) \to {\cal G}(N, L)_1^{\natural}$ such that \\[1mm]  
\hspace*{50mm} $(\phi(\mu)|_L)^\ast\alpha \sim_1 \mu$ \ $(\forall \, \mu \in {\cal V}^{\ssp}_+(L))$ \ and \ $\phi(\alpha) = \id_M$. 
\end{thm}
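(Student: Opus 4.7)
\medskip

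\noindent\textbf{Proof plan.} Since $L$ inherits an orientation from $N$, the canonical isomorphism $\eta_L$ identifies ${\cal V}^{\ssp}_+(L)$ with the space of positive volume $n$-forms on $L$, so I work throughout with volume forms. For $\mu \in {\cal V}^{\ssp}_+(L)$, set $\overline{\mu} = \mu/\mu(L) \in {\cal V}^{\ssp}_+(L; 1)$ and form the linear path
$$\omega_t^\mu = (1-t)\,\alpha + t\,\overline{\mu} \qquad (t \in [0, 1]),$$
whose members remain positive volume forms of total volume $1$. The plan is to run Moser's trick for the family $\omega_t^\mu$, producing a diffeomorphism of $L$ sending $\overline{\mu}$ to $\alpha$, and then to extend this ambiently across a collar into $M$.

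\medskip

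\noindent\textbf{Step 1 (primitive with boundary vanishing).} Since $\int_L(\alpha - \overline{\mu}) = 0$, the class of $\alpha - \overline{\mu}$ vanishes in $H^n(L, \partial L; {\Bbb R}) \cong {\Bbb R}$. Fix once and for all a continuous linear right inverse $T$ to $d : \{\beta \in \Omega^{n-1}(L) \mid \iota^\ast\beta = 0\} \lra d\Omega^{n-1}(L, \partial L)$, obtained from Hodge theory for $L$ with absolute boundary conditions (or an explicit collar-based homotopy operator). Set $\beta_\mu = T(\alpha - \overline{\mu})$; then $d\beta_\mu = \alpha - \overline{\mu}$, $\iota^\ast\beta_\mu = 0$, $\beta_\mu$ depends continuously on $\mu$ in the $C^\infty$-topology, and $\beta_\alpha = 0$.

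\medskip

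\noindent\textbf{Step 2 (Moser flow on $L$).} Define a time-dependent vector field $X_t^\mu$ on $L$ by $i_{X_t^\mu}\omega_t^\mu = \beta_\mu$. Because $\iota^\ast\beta_\mu = 0$, the vector field $X_t^\mu$ is tangent to $\partial L$, hence generates a smooth flow $g_{t, \mu} : L \to L$ preserving $L$ setwise. Cartan's formula gives
$$\frac{\partial}{\partial t}\bigl(g_{t,\mu}^\ast \omega_t^\mu\bigr) = g_{t,\mu}^\ast\bigl(L_{X_t^\mu}\omega_t^\mu + \partial_t\omega_t^\mu\bigr) = g_{t,\mu}^\ast\bigl(d\beta_\mu + (\overline{\mu} - \alpha)\bigr) = 0,$$
so $g_{1,\mu}^\ast\overline{\mu} = \alpha$, and the inverse $h_\mu := g_{1,\mu}^{-1}$ satisfies $h_\mu^\ast\alpha = \overline{\mu} \sim_1 \mu$. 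When $\mu = \alpha$ we have $\beta_\alpha = 0$, hence $X_t^\alpha \equiv 0$ and $h_\alpha = \id_L$.

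\medskip

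\noindent\textbf{Step 3 (ambient extension).} Choose a compact smooth $n$-submanifold $K$ with $L \subset {\rm Int}\,K \subset K \subset {\rm Int}\,N$, together with a fixed smooth cutoff $\rho : M \to [0,1]$ with $\rho \equiv 1$ on $L$ and $\mathrm{supp}\,\rho \subset {\rm Int}\,K$, and a fixed smooth extension operator $E$ (using a collar of $\partial L$ in $N - {\rm Int}\,L$) carrying a vector field on $L$ tangent to $\partial L$ to one on $M$ supported in $\mathrm{supp}\,\rho$, restricting to the original on $L$, and tangent to $\partial L$ along $\partial L$. Apply $E$ to $X_t^\mu$ to obtain $\widetilde{X}_t^\mu$ on $M$, and let $\widetilde{g}_{t,\mu}$ be its flow. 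Then $\widetilde{g}_{t,\mu} \in {\cal G}(N, L)$ for every $t$: it is the identity on $M - {\rm Int}\,N$, and preserves $L$ because $\widetilde{X}_t^\mu$ is tangent to $\partial L$ there with $\widetilde{g}_{t,\mu}|_L = g_{t,\mu}$. Define
$$\phi(\mu) := \widetilde{g}_{1,\mu}^{-1}.$$
The isotopy $t \mapsto \widetilde{g}_{t,\mu}^{-1}$ is a smooth path in ${\cal G}(N, L)$ from $\id_M$ to $\phi(\mu)$, so $\phi(\mu) \in {\cal G}(N, L)_1^\natural$; we have $\phi(\alpha) = \id_M$ by Step 2, and $(\phi(\mu)|_L)^\ast\alpha = h_\mu^\ast\alpha = \overline{\mu} \sim_1 \mu$.

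\medskip

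\noindent\textbf{Main obstacle.} The technical heart is Step 1: constructing the continuous linear primitive operator $T$ so that $\beta_\mu$ (i) solves $d\beta_\mu = \alpha - \overline{\mu}$, (ii) vanishes when pulled back to $\partial L$ (so the Moser field stays tangent to $\partial L$ and the flow preserves $L$), and (iii) depends continuously on $\mu$ in the compact-open $C^\infty$-topology. The first two together amount to working in relative de Rham cohomology; the continuous dependence is what is really needed for $\phi$ to be useful downstream, and requires either Hodge theory with absolute boundary conditions on $L$ or an equally careful explicit construction via a chosen collar. The ambient extension in Step 3 is routine by comparison, since $L$ is compact and sits inside ${\rm Int}\,N$.
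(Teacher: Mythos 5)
Your proof is correct, but it takes a genuinely different route from the paper's. You handle $\partial L$ at the level of the primitive: you pick $\beta_\mu$ with $d\beta_\mu = \alpha - \overline{\mu}$ and $\iota^\ast\beta_\mu = 0$, so the Moser field $i_{X_t^\mu}\omega_t^\mu = \beta_\mu$ is tangent to $\partial L$ and a single flow, extended ambiently by a cutoff, produces $\phi(\mu)$ directly --- this is essentially Banyaga's boundary version of Moser's theorem, made parametric. The paper instead handles the boundary at the level of the forms: it extends $\mu$ to a form $s(\mu)$ on the larger manifold $N$, uses a parametrized version of McDuff's Lemma A2 to build a diffeomorphism $\psi(s(\mu))$ supported in a bicollar of $\partial L$ which forces $(\psi(s(\mu))|_L)^\ast\alpha'$ and $\overline{\mu}$ to \emph{agree} on a neighborhood of $\partial L$, and only then applies the parametrized Moser theorem, now in its interior-supported form, to get $\chi(s(\mu))$ fixing $M - {\rm Int}\,L$; the answer is the composite $\psi(s(\mu))\chi(s(\mu))$ (both parametrized lemmas are quoted from \cite{Mos} via \cite{Ya4}). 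The trade-off is clear: the paper's route needs no elliptic theory and no relative-cohomology potential operator, reusing ready-made parametrized statements at the cost of a two-stage composition and auxiliary collar data; your route is a single flow and conceptually cleaner, but the entire burden sits in your Step 1, where the continuous linear operator $T$ (Hodge--Morrey theory with boundary conditions in the Fr\'echet topology, or an explicit collar-based homotopy operator) must actually be produced --- exactly the point you flag as the main obstacle. Granting that standard fact, your Steps 2--3 (tangency of $X_t^\mu$ to $\partial L$ from $\iota^\ast\beta_\mu = 0$, invariance of $L$ under the extended flow, landing in ${\cal G}(N,L)_1^{\natural}$ via the isotopy $t \mapsto \widetilde{g}_{t,\mu}^{-1}$, and the normalizations $\phi(\alpha) = \id_M$, $(\phi(\mu)|_L)^\ast\alpha = \overline{\mu} \sim_1 \mu$) are all sound.
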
 

\begin{proof} 
Under the canonical homeomorphism 
$\eta_L : {\cal V}^{\ssp}_+(L) \cong {\cal V}_+(L)$, 
the map $\phi$ corresponds with the map 
$\phi' : {\cal V}_+(L) \to {\cal G}(N, L)_1^{\natural}$ such that \\[1mm]  
\hspace*{50mm} $(\phi'(\mu)|_L)^\ast\alpha' \sim_1 \mu$ \ $(\forall \, \mu \in {\cal V}_+(L))$ \ and \ $\phi'(\alpha') = \id_M$, \\
where $\alpha' = \eta_L(\alpha) \in {\cal V}_+(L; 1)$. 
Below we construct the map $\phi'$. 

There exists a map $s : {\cal V}_+(L) \to {\cal V}_+(N)$ such that 
\begin{itemize}
\item[(i)\ ] $s(\mu)|_L = \mu$ \ $(\forall \,\mu \in {\cal V}_+(L))$. 
\end{itemize} 
Let $\beta = s(\alpha') \in {\cal V}_+(N)$. 
Choose a small bicollar $E = \partial L \times [-1, 1]$ of $\partial L$ in ${\rm Int}\,N$ with $\partial L \times [-1, 0] \subset L$. 
Then, by the parametrized version of \cite[Lemma A2]{Mc} (cf. \cite[Lemma 2.3]{Ya4}) we can find maps 
$$\mbox{$\psi : {\cal V}_+(N) \to {\cal D}_{(M - {\rm Int}\,E) \cup \partial L}(M)_1^{\natural} \subset {\cal G}(N, L)_1^{\natural}$ \ \ and \ \ $\e : {\cal V}_+(N) \to (0, 1/2)$}$$ 
such that 
\begin{itemize}
\item[(ii)\,] $\ds (\psi(\nu)|_N)^\ast\beta = \frac{1}{\nu(L)}\nu$ \ on \ $\partial L \times [-2\e(\nu), 2\e(\nu)]$ \ $(\forall \,\nu \in {\cal V}_+(N))$ \ \ and \ \ (ii)$'$ \ $\psi(\beta) = \id_M$. 
\end{itemize} 
\vskip 1mm 
Consider two maps $\lambda, \kappa : {\cal V}_+(N) \to {\cal V}_+(L; 1)$ defined by 
$$\mbox{$\lambda(\nu) = ((\psi(\nu)|_N)^\ast\beta)|_L = (\psi(\nu)|_L)^\ast\alpha'$ \ \ and \ \ 
$\kappa(\nu) = \overline{\nu|_L}$.}$$  
Then, we have 
\begin{itemize}
\item[(iii)] $\ds \lambda(\nu) = (\psi(\nu)|_N)^\ast\beta 
= \frac{1}{\nu(L)}\nu = \kappa(\nu)$ \ on \ $\partial L \times [-2\e(\nu), 0]$ \ \ and \ \ (iii)$'$ \ $\lambda(\beta) = \kappa(\beta) = \alpha'$. 
\end{itemize} 
Thus, the parametrized version of Moser's Theorem \cite[Theorem 2]{Mos} (cf. \cite[Theorem 2.1]{Ya4}) yields a map 
$$\chi : {\cal V}_+(N) \to {\cal D}_{M - {\rm Int}\,L}(M)_1^{\natural} \subset {\cal G}({N, L})_1^{\natural}$$ 
such that 
\begin{itemize}
\item[(iv)] $(\chi(\nu)|_L)^\ast \lambda(\nu) = \kappa(\nu)$ \ $(\forall\,\nu \in {\cal V}_+(N))$ \ \ and \ \ (iv)$'$ \ $\chi(\beta) = \id_M$. 
\end{itemize} 

The required map $\phi' : {\cal V}_+(L) \to {\cal G}(N, L)_1^{\natural}$ is defined by 
$$\phi'(\mu) = \psi(s(\mu))\chi(s(\mu)).$$ 
Indeed, one sees that,  
\begin{itemize}
\item[(v)\,] 
$\begin{array}[t]{@{}l@{}l@{}ll}
(\phi'(\mu)|_L)^\ast\alpha' & \ =\ & (\psi(s(\mu)|_L\,\chi(s(\mu))|_L)^\ast\alpha' 
= (\chi(s(\mu))|_L)^\ast (\psi(s(\mu)|_L)^\ast \alpha' & \\[1.5mm]  
& \ =\ & (\chi(s(\mu)|_L)^\ast\lambda(s(\mu)) 
= \kappa(s(\mu))
= \overline{\mu} & \text{$(\forall\,\mu \in {\cal V}_+(L))$ \ and}
\end{array}$ 
\vskip 1.5mm 
\item[(v)$'$] $\phi'(\alpha') = \psi(\beta)\chi(\beta) = \id_M$. 
\end{itemize}
This completes the proof. 
\end{proof} 

%%%%%%%%%%%%

\subsection{$\big(\prod_{i \in \Lambda} \ell_2, \sum_{i \in \Lambda} \ell_2 \big)$-stability of diffeomorphism groups} \mbox{} 

Suppose $M$ and $N$ are smooth $n$-manifolds possibly with boundary. 
We fix a section $\omega \in {\cal V}^{\ssp}_+(N)$. 
Let $\{ E_i \}_{i \in \Lambda}$ be a discrete family of oriented smooth closed $n$-disks in $M$. 
Since $M$ is assumed to be separable, the index set $\Lambda$ is at most countable and  
when $M$ is non-compact, we can take $\Lambda$ to be an infinite countable set. 
For each $i \in \Lambda$ take an $n$-subdisk $D_i$ in ${\rm Int}\,E_i$, 
which inherits the orientation from $E_i$. 

Consider the subspace ${\cal E}$ of $C^\infty(M, N)$ defined by  
\[\hspace{10mm} \mbox{${\cal E} = \big\{ f \in C^\infty(M, N) \mid 
f|_{D_i} : D_i \to N$ is a $C^\infty$-immersion for each $i \in \Lambda\big\}$.}\]
Below we study the stability property of the space ${\cal E}$ and its subspaces,  based upon the arguments in Section 3.3. 

Define the subgroup ${\cal G}$ of ${\cal D}(M)$ by \ ${\cal G} = ({\cal G}')_1^{\natural}$, where 
$$\mbox{${\cal G}' = \big\{ h \in {\cal D}(M) \mid 
h = \id_M \text{ on } M - \cup_{i \in \Lambda} {\rm Int}\,E_i, \  
h(D_i) = D_i \text{ for each } i \in \Lambda \big\}$.}$$  
The group ${\cal G}$ acts continuously on the space ${\cal E}$ by the right composition, and as a transformation group on $M$ it 
has a weak topology and also 
has the multiplication supported by the family $\{ E_i \}_{i \in \Lambda}$. 
Hence, the composition map 
$\eta : \prod_{i \in \Lambda} {\cal G}(E_i) \to {\cal G}$ is continuous by Lemma~\ref{lem_comp}.  

Since ${\cal E} \neq \emptyset$, we can choose a distinguished element 
$f_0 \in {\cal E}$. 
A support function for ${\cal E}$ on $M$ is defined by 
$${\rm supp}_{f_0}f = cl_M\{ x \in M \mid f(x) \neq f_0(x) \}.$$  
Note that it satisfies the condition in Assumption~\ref{assump_stability}\,(3) and the subspace ${\cal E}^c$ is ${\cal G}^c$-invariant (i.e., ${\cal E}^c \cdot {\cal G}^c = {\cal E}^c$). 

For each $i \in \Lambda$ define a pointed space $(B_i, \alpha_i)$ and two maps $P_i$ and $G_i$ as follows: Let 
\begin{itemize} 
\item[] $(B_i, \alpha_i) = \big({\cal V}^{\ssp}_+(D_i; 1), \overline{(f_0|_{D_i})^\ast \omega}\big)$. 
\end{itemize} 
Theorem~\ref{thm_Moser} yields a map 
$\phi_i : {\cal V}^{\ssp}_+(D_i) \to {\cal G}$ such that 
$$\mbox{(i) $(\phi_i(\lambda)|_{D_i})^\ast\alpha_i \sim_1 \lambda$ \ $(\forall \,  \lambda \in {\cal V}^{\ssp}_+(D_i))$ \ \ and \ \ (ii) $\phi_i(\alpha_i) = \id_M$.}$$  
The maps $P_i$ and $G_i$ are defined by 
\begin{itemize} 
\item[] $P_i : ({\cal E}, f_0) \to (B_i, \alpha_i); \ \ P_i(f) = \overline{(f|_{D_i})^\ast \omega}$, 
\vskip 1mm 
\item[] $G_i : (B_i, \alpha_i) \to ({\cal G}, \id_M); \ \ 
G_i(\lambda) = \phi_i(\lambda)$.   
\end{itemize} 

\begin{claim} The maps $P_i$ and $G_i$ satisfy the conditions (i) - (iii) in Assumption~\ref{assump_stability}\,(4). 
\end{claim}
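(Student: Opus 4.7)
The plan is to verify the three conditions by direct computation from the definitions of $P_i$ and $G_i$, exploiting two key facts: (a) every element $g \in {\cal G}$ satisfies $g(D_i) = D_i$ (by definition of ${\cal G}'$), so composition with $g$ restricts nicely to $D_i$, and (b) the map $\phi_i$ delivered by Theorem~\ref{thm_Moser} satisfies $(\phi_i(\lambda)|_{D_i})^\ast\alpha_i \sim_1 \lambda$, which after normalization becomes an equality in $B_i = {\cal V}^{\ssp}_+(D_i;1)$.

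For condition (i), if $({\rm supp}_{f_0}\,f) \cap D_i = \emptyset$, then $f \equiv f_0$ on $D_i$, hence $(f|_{D_i})^\ast\omega = (f_0|_{D_i})^\ast\omega$, so $P_i(f) = \overline{(f_0|_{D_i})^\ast\omega} = \alpha_i$. This is immediate from the support convention and the definition of $P_i$.

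For conditions (ii) and (iii), I would first abbreviate $h = G_i(\mu_i) = \phi_i(\mu_i)$ for the relevant $\mu_i \in B_i$, and set $\mu = (f|_{D_i})^\ast\omega$ so that $P_i(f) = \overline{\mu}$. Since $g(D_i) = D_i$, the restrictions $(fg^{\pm 1})|_{D_i}$ are well-defined compositions $(f|_{D_i}) \circ (g^{\pm 1}|_{D_i})$, and the hypothesis $\widehat{g} = \widehat{h}$ on $D_i$ lets me replace $g|_{D_i}$ by $h|_{D_i}$ in every pull-back. For (ii), with $\mu_i = P_i(f) = \overline{\mu}$, Theorem~\ref{thm_Moser} gives $(h|_{D_i})^\ast\alpha_i \sim_1 \overline{\mu} \sim_1 \mu$; pulling back by $(h^{-1}|_{D_i})^\ast$ yields $\alpha_i \sim_1 (h^{-1}|_{D_i})^\ast\mu = ((fg^{-1})|_{D_i})^\ast\omega$, so $P_i(fg^{-1}) = \alpha_i$. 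For (iii), with $P_i(f) = \alpha_i$, we have $\mu \sim_1 \alpha_i$, hence $((fg)|_{D_i})^\ast\omega = (h|_{D_i})^\ast\mu \sim_1 (h|_{D_i})^\ast\alpha_i \sim_1 \nu$, giving $P_i(fg) = \overline{\nu} = \nu$.

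There is no serious obstacle here; the verification is essentially formal bookkeeping with pull-backs of densities, and the entire geometric content is absorbed into Theorem~\ref{thm_Moser}. The only point that deserves care is checking that the $\sim_1$-equivalence classes are preserved under the pull-back by $(h|_{D_i})^\ast$ and its inverse (which follows because $h|_{D_i}:D_i \to D_i$ is a diffeomorphism, making the pull-back a linear isomorphism on ${\cal V}^{\ssp}(D_i)$ that respects positive scalar multiplication), and noting that equality of normalizations $\overline{\cdot}$ in $B_i$ is equivalent to $\sim_1$-equivalence in ${\cal V}^{\ssp}_+(D_i)$.
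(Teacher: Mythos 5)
Your proof is correct and takes essentially the same route as the paper's: (i) is immediate from the support convention, and (ii), (iii) are the same pull-back computations on $D_i$, using $(\phi_i(\lambda)|_{D_i})^\ast\alpha_i \sim_1 \lambda$ from Theorem~\ref{thm_Moser}, the fact that each $g \in {\cal G}$ preserves $D_i$, and the equivalence $\overline{\mu} = \overline{\nu} \Leftrightarrow \mu \sim_1 \nu$. Nothing is missing.
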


\begin{proof} 
(i) If $({\rm supp}_{f_0}f) \cap D_i = \emptyset$, then 
$P_i(f) = \overline{(f|_{D_i})^\ast \omega} = \overline{(f_0|_{D_i})^\ast \omega} = \alpha_i$. 

(ii) Suppose $(f, g) \in {\cal E} \times {\cal G}$ and $g = G_i(P_i(f))$ on $D_i$. 
For $\lambda = P_i(f) \, (= \overline{(f|_{D_i})^\ast \omega})$, 
it is seen that  \  
$$\mbox{$(f|_{D_i})^\ast\omega \sim_1 \lambda$ \ \ and \ \ 
$(\phi_i(\lambda)|_{D_i})^\ast \alpha_i \sim_1 \lambda$\,, \ so that}$$ 
$$\big((\phi_i(\lambda)|_{D_i})^{-1}\big)^\ast (f|_{D_i})^\ast\omega \sim_1 \big((\phi_i(\lambda)|_{D_i})^{-1}\big)^\ast \lambda \sim_1 \alpha_i.$$ 
\vskip 1mm 
\noindent Since \ 
$(fg^{-1})|_{D_i} = f|_{D_i} (g|_{D_i})^{-1} = f|_{D_i}(G_i(\lambda)|_{D_i})^{-1}
= f|_{D_i}(\phi_i(\lambda)|_{D_i})^{-1}$\,, 
it follows that \  
$$P_i(fg^{-1}) 
\ = \ \overline{((fg^{-1})|_{D_i})^\ast \omega \makebox(0,9){}}
\ = \ \overline{(f|_{D_i}(\phi_i(\lambda)|_{D_i})^{-1})^\ast \omega \makebox(0,9){}}
\ = \ \overline{((\phi_i(\lambda)|_{D_i})^{-1})^\ast(f|_{D_i})^\ast \omega \makebox(0,9){}}
\ = \ \alpha_i.$$

(iii) Suppose $(f, g, \lambda) \in {\cal E} \times {\cal G} \times B_i$, 
$P_i(f) = \alpha_i$ and $g = G_i(\lambda)$ on $D_i$. 
Then one has 
$$\mbox{$P_i(f) = \alpha_i$, \ \ $(\phi_i(\lambda)|_{D_i})^\ast\alpha_i \sim_1 \lambda$ \ \ 
 and \ \ 
$(fg)|_{D_i} = f|_{D_i}g|_{D_i} = f|_{D_i}G_i(\lambda)|_{D_i}
= f|_{D_i}\phi_i(\lambda)|_{D_i}.$}$$ 
This implies that $(f|_{D_i})^\ast \omega \sim_1 \alpha_i$ and  
$$\hspace{8mm} 
((fg)|_{D_i}\big)^\ast \omega = (f|_{D_i}\phi_i(\lambda)|_{D_i})^\ast \omega
= (\phi_i(\lambda)|_{D_i})^\ast(f|_{D_i})^\ast \omega 
\sim_1 (\phi_i(\lambda)|_{D_i})^\ast \alpha_i 
\sim_1 \lambda,$$ 
so that $P_i(fg) = \overline{((fg)|_{D_i})^\ast \omega} = \lambda$. 
\end{proof} 

Hence, we can apply the arguments in Section 3.3 to this setting. 
Two pointed pairs $({\cal B}, {\cal B}^c, \alpha)$ and $({\cal F}, {\cal F}^c, f_0)$ and three maps $P$, $G$ and $F$ are defined by 
\begin{itemize}
\item[] $({\cal B}, {\cal B}^c, \alpha) = \big(\prod_{i \in \Lambda} B_i, \sum_{i \in \Lambda} B_i, (\alpha_i)_i \big)$, 
\vskip 1mm 
\item[] $P : ({\cal E}, {\cal E}^c, f_0) \to ({\cal B}, {\cal B}^c, \alpha)$; \ \ 
$P(f) = (P_i(f))_{i \in \Lambda}$ \ $(f \in {\cal E})$,  
\vskip 1mm 
\item[] $G : ({\cal B}, {\cal B}^c, \alpha) \to ({\cal G}, {\cal G}^c, \id_M)$; \ \ 
$G(\mu) = \eta\big((G_i(\mu_i))_i\big)$ \ $(\mu = (\mu_i)_i \in {\cal B})$,
\vskip 1mm 
\item[] ${\cal F} = P^{-1}(\alpha)$ \ \ $({\cal F}^c = {\cal F} \cap {\cal E}^c$), 
\vskip 1mm 
\item[] $F : ({\cal E}, {\cal E}^c, f_0) \to ({\cal F}, {\cal F}^c, f_0)$; \ \ 
$F(f) = f \cdot G(P(f))^{-1}$ \ $(f \in {\cal E})$.
\end{itemize}

These maps 
determine two maps $\Phi$ and $\Psi$ by  
\begin{itemize}
\item[] $\Phi : ({\cal E}, {\cal E}^c, f_0) \lra ({\cal B}, {\cal B}^c, \alpha) \times ({\cal F}, {\cal F}^c, f_0)$; \ \ $\Phi(f) = (P(f), F(f))$, 
\vskip 1mm 
\item[] $\Psi : ({\cal B}, {\cal B}^c, \alpha) \times ({\cal F}, {\cal F}^c, f_0) \lra ({\cal E}, {\cal E}^c, f_0)$; \ \ $\Psi(\mu, g) = g \cdot G(\mu)$. 
\end{itemize} 

\begin{lemma}\label{lem_diff_stability} 
$({\cal B}, {\cal B}^c) \cong \big(\prod_{i \in \Lambda} \ell_2, \sum_{i \in \Lambda} \ell_2 \big)$. 
Thus, the pair $({\cal B}, {\cal B}^c)$ is $\big(\prod_{i \in \Lambda} \ell_2, \sum_{i \in \Lambda} \ell_2 \big)$-stable. 
\end{lemma}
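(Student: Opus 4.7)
The plan is to reduce the statement entirely to Lemma~\ref{lem_convex} together with the topological homogeneity of $\ell_2$ and the standard reindexing tricks for countable products of $\ell_2$.

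First I would establish the pointed homeomorphism $(B_i, \alpha_i) \cong (\ell_2, 0)$ for every $i \in \Lambda$. By definition $B_i = {\cal V}^{\ssp}_+(D_i; 1)$, and Lemma~\ref{lem_convex} already gives $B_i \cong \ell_2$. Because $\ell_2$ is a topologically homogeneous space, any self-homeomorphism can be post-composed with a translation (or more generally a homeomorphism sending the image of $\alpha_i$ to the origin), and this produces the required basepoint-preserving homeomorphism $(B_i, \alpha_i) \cong (\ell_2, 0)$.

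Next I would take the product of these pointed homeomorphisms over $i \in \Lambda$. Since each factor sends $\alpha_i$ to $0$, the induced product homeomorphism $\prod_{i \in \Lambda} B_i \cong \prod_{i \in \Lambda} \ell_2$ carries the subspace of sequences $(\mu_i)_i$ with $\mu_i = \alpha_i$ for all but finitely many $i$ bijectively onto the subspace of sequences with $x_i = 0$ for all but finitely many $i$. That is,
\[
\bigl({\cal B}, {\cal B}^c\bigr) \;=\; \Bigl(\prod_{i \in \Lambda} B_i,\ \sum_{i \in \Lambda} B_i\Bigr) \;\cong\; \Bigl(\prod_{i \in \Lambda} \ell_2,\ \sum_{i \in \Lambda} \ell_2\Bigr),
\]
which is the first assertion.

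Finally, for the $\bigl(\prod^\omega \ell_2, \sum^\omega \ell_2\bigr)$-stability, I would use that in the relevant applications $\Lambda$ is countably infinite (as noted just before the construction), so any bijection $\Lambda \sqcup \mathbb{N} \cong \Lambda$ induces a homeomorphism of pairs
\[
\Bigl(\prod_{i \in \Lambda} \ell_2 \times \prod^\omega \ell_2,\ \sum_{i \in \Lambda} \ell_2 \times \sum^\omega \ell_2\Bigr) \;\cong\; \Bigl(\prod_{i \in \Lambda} \ell_2,\ \sum_{i \in \Lambda} \ell_2\Bigr),
\]
since the bijection sends finitely supported sequences to finitely supported sequences. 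Combining with the first half gives the stability. There is no substantive obstacle here; the only thing that needs a moment of care is the compatibility of the basepoints so that the weak product is mapped onto the weak product, and this is ensured by choosing \emph{pointed} homeomorphisms in the first step.
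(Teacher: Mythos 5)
Your proof is correct, and the first assertion is handled exactly as in the paper: Lemma~\ref{lem_convex} gives $B_i \cong \ell_2$, and (as you rightly emphasize, and the paper leaves implicit) one upgrades this to a pointed homeomorphism $(B_i, \alpha_i) \cong (\ell_2, 0)$ using the homogeneity of $\ell_2$ so that the product homeomorphism carries $\sum_{i \in \Lambda} B_i$ onto $\sum_{i \in \Lambda} \ell_2$. For the stability assertion your route differs slightly from the paper's. The paper works factor-wise: it uses $(\ell_2)^2 \cong \ell_2$ together with the reindexing $\big(\prod_{i \in \Lambda} \ell_2, \sum_{i \in \Lambda} \ell_2\big)^2 \cong \big(\prod_{i \in \Lambda} (\ell_2)^2, \sum_{i \in \Lambda} (\ell_2)^2\big) \cong \big(\prod_{i \in \Lambda} \ell_2, \sum_{i \in \Lambda} \ell_2\big)$, which absorbs a second copy of the model pair \emph{inside each factor} and therefore works for an arbitrary index set $\Lambda$, finite or infinite. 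You instead absorb the extra factor by a bijection $\Lambda \sqcup {\Bbb N} \cong \Lambda$ of the index set, which requires $\Lambda$ to be infinite; moreover you prove $\big(\prod^\omega \ell_2, \sum^\omega \ell_2\big)$-stability rather than the stated $\big(\prod_{i \in \Lambda} \ell_2, \sum_{i \in \Lambda} \ell_2\big)$-stability. These two statements coincide precisely when $\Lambda$ is countably infinite, which is the case in all the applications (non-compact $M$), so your argument suffices there; the paper's square trick is just the more uniform formulation. No gap, only this small loss of generality.
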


\begin{proof} 
From Lemma~\ref{lem_convex} it follows that $B_i = {\cal V}^{\ssp}_+(D_i; 1) \cong \ell_2$ for each $i \in \Lambda$ and $({\cal B}, {\cal B}^c) \cong\big(\prod_{i \in \Lambda} \ell_2, \sum_{i \in \Lambda} \ell_2 \big)$. 
Note that the pair 
$\big(\prod_{i \in \Lambda} \ell_2, \sum_{i \in \Lambda} \ell_2 \big)$ itself is 
$\big(\prod_{i \in \Lambda} \ell_2, \sum_{i \in \Lambda} \ell_2 \big)$-stable, since 
$$\mbox{$(\ell_2)^2 \cong \ell_2$ \ \ and  \ \ $\big(\prod_{i \in \Lambda} \ell_2, \sum_{i \in \Lambda} \ell_2\big)^2 
\cong 
\big(\prod_{i \in \Lambda} (\ell_2)^2, \sum_{i \in \Lambda} (\ell_2)^2\big)
\cong 
\big(\prod_{i \in \Lambda} \ell_2, \sum_{i \in \Lambda} \ell_2\big).$}$$
Thus the pair $({\cal B}, {\cal B}^c)$ is also $\big(\prod_{i \in \Lambda} \ell_2, \sum_{i \in \Lambda} \ell_2 \big)$-stable. 
\end{proof} 

The next proposition follows from Proposition~\ref{prop_stability}. 

\begin{prop}\label{prop_diff_stability} 
{\rm (1)} The maps $\Phi$ and $\Psi$ are reciplocal homeomorphisms of pairs.

{\rm (2)} {\rm (i)} If $({\cal E}_1, {\cal E}_2)$ is a subpair of $({\cal E}, {\cal E}^c)$,  
${\cal E}_1$ is ${\cal G}$-invariant and ${\cal E}_2$ is ${\cal G}^c$-invariant, then  
the map $\Phi$ restricts to the homeomorphism of the subpairs 
$$\Phi : ({\cal E}_1, {\cal E}_2) \lra ({\cal B}, {\cal B}^c) \times ({\cal E}_1 \cap {\cal F}, {\cal E}_2 \cap {\cal F}).$$ 
In particular, the pair $ ({\cal E}_1, {\cal E}_2)$ is $\big(\prod_{i \in \Lambda} \ell_2, \sum_{i \in \Lambda} \ell_2 \big)$-stable. 

{\rm (ii)} If ${\cal E}_1$ is a ${\cal G}$-invariant subspace of ${\cal E}$, then 
the map $\Phi$ induces the homeomorphism of the subpairs 
$$\Phi : ({\cal E}_1, {\cal E}_1^c) \lra ({\cal B}, {\cal B}^c) \times ( {\cal F}_1, {\cal F}_1^c) \hspace{5mm} \mbox{where \ ${\cal F}_1 = {\cal E}_1 \cap {\cal F}$}.$$ 
\end{prop}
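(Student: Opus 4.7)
The plan is to derive this proposition directly from Proposition~\ref{prop_stability}, whose hypotheses (namely Assumption~\ref{assump_stability}) have already been systematically set up in the construction preceding the statement. Concretely, the discrete family $\{E_i\}_{i \in \Lambda}$ of closed $n$-disks together with the compact subdisks $D_i \subset {\rm Int}\,E_i$ supplies Assumption~\ref{assump_stability}(1); the group ${\cal G}$ was defined so as to inherit a weak topology from ${\cal D}(M)$ and to admit the multiplication supported by $\{E_i\}_{i \in \Lambda}$, giving (2); the pointed space $({\cal E}, f_0)$ equipped with the right ${\cal G}$-action by composition and the support function ${\rm supp}_{f_0}$ gives (3), since ${\rm supp}_{f_0}f_0 = \emptyset$ and the subadditivity ${\rm supp}_{f_0}(fg) \subset {\rm supp}_{f_0}f \cup {\rm supp}\,g$ holds because right composition by $g \in {\cal G}$ only alters $f$ inside $\cup_i E_i$; and item (4) is precisely what the preceding Claim verifies.

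For assertion (1), I would simply invoke Proposition~\ref{prop_stability}(i), which immediately yields that $\Phi$ and $\Psi$ are mutually inverse homeomorphisms of the pointed pairs $({\cal E}, {\cal E}^c)$ and $({\cal B}, {\cal B}^c) \times ({\cal F}, {\cal F}^c)$.

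For assertion (2)(i), the hypotheses placed on $({\cal E}_1, {\cal E}_2)$ match exactly those of Proposition~\ref{prop_stability}(ii)(a), so applying that statement yields the restricted homeomorphism $\Phi : ({\cal E}_1, {\cal E}_2) \cong ({\cal B}, {\cal B}^c) \times ({\cal E}_1 \cap {\cal F}, {\cal E}_2 \cap {\cal F})$. The ``in particular'' stability assertion then follows from Lemma~\ref{lem_diff_stability}: since $({\cal B}, {\cal B}^c)$ is $\big(\prod_{i \in \Lambda}\ell_2, \sum_{i \in \Lambda}\ell_2\big)$-stable, any product pair $({\cal B}, {\cal B}^c) \times (X, Y)$ inherits this stability, which then transfers across $\Phi$ to $({\cal E}_1, {\cal E}_2)$. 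For assertion (2)(ii), I would observe that whenever ${\cal E}_1$ is ${\cal G}$-invariant, its compact-support part ${\cal E}_1^c = {\cal E}_1 \cap {\cal E}^c$ is automatically ${\cal G}^c$-invariant (as the intersection of a ${\cal G}$-invariant set with the ${\cal G}^c$-invariant set ${\cal E}^c$), so (2)(ii) reduces to the special case of (2)(i) with ${\cal E}_2 := {\cal E}_1^c$, matching Proposition~\ref{prop_stability}(ii)(b).

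No serious obstacle is expected in the proof itself: the substantial work — constructing the isotopies $\phi_i$ via the parametrized Moser theorem (Theorem~\ref{thm_Moser}), verifying the three compatibility axioms of Assumption~\ref{assump_stability}(4) for $(P_i, G_i)$ in the Claim, identifying each factor $B_i$ with $\ell_2$ via Lemma~\ref{lem_convex}, and establishing the abstract factorization machinery in Proposition~\ref{prop_stability} — has already been carried out. The present statement is a pure packaging step, and the only care needed is to track which invariance condition is required for which conclusion, which is routine.
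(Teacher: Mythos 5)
Your proposal is correct and follows the paper's own route exactly: the paper proves Proposition~\ref{prop_diff_stability} simply by observing that the preceding construction (the discrete family $\{E_i\}_{i\in\Lambda}$ with subdisks $D_i$, the group ${\cal G}$, the pointed space $({\cal E},f_0)$ with its support function, and the Claim verifying the conditions of Assumption~\ref{assump_stability}(4)) places the situation within the scope of Proposition~\ref{prop_stability}, whose parts (i), (ii)(a) and (ii)(b) yield assertions (1), (2)(i) and (2)(ii) respectively. Your supplementary remarks --- deducing the stability consequence from Lemma~\ref{lem_diff_stability} and noting that ${\cal E}_1^c={\cal E}_1\cap{\cal E}^c$ is automatically ${\cal G}^c$-invariant --- coincide with the observations the paper itself makes in and around Proposition~\ref{prop_stability}.
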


\begin{example}
The space ${\cal E}$ includes the following ${\cal G}$-invariant subspaces;  
\begin{itemize}
\item[] ${\cal E}_1 :={\rm Imm}^\infty(M, N) = \{ f \in C^\infty(M, N) \mid f$ is a $C^\infty$-immersion$\}$, 
\item[] ${\cal E}_2 :={\rm Emb}^\infty(M, N) = \{ f \in C^\infty(M, N) \mid f$ is a $C^\infty$-embedding$\}$, 
\item[] ${\cal E}_3 :={\rm Cov}^\infty(M, N) = \{ f \in C^\infty(M, N) \mid f$ is a $C^\infty$-covering  projection$\}$. 
\end{itemize}
For each $i = 1, 2, 3$, the map $\Phi$ induces the homeomorphism between the subpairs 
$$\Phi : ({\cal E}_i, {\cal E}_i^c) \cong 
({\cal B}, {\cal B}^c) \times ({\cal F}_i, {\cal F}_i^c), \hspace{10mm} 
\text{where ${\cal F}_i = {\cal E}_i \cap {\cal F}$}.$$ 
Thus, the pair $({\cal E}_i, {\cal E}_i^c)$ is $\big(\prod_{i \in \Lambda} \ell_2, \sum_{i \in \Lambda} \ell_2 \big)$-stable. 
\end{example}

Next we consider the case where $M = N$. 
As a base point of the space ${\cal E}$ we take $f_0 = \id_M$. 
Then the support function ${\rm supp}_{f_0}$  
reduces to the ordinary support function. 
The space ${\cal E}$ includes the group ${\cal D}(M)$  
as a ${\cal G}$-invariant subspace. 
Below we discuss the $\big(\prod_{i \in \Lambda} \ell_2, \sum_{i \in \Lambda} \ell_2 \big)$-stability of the pair $({\cal D}(M), {\cal D}^c(M))$ and 
its subpairs. 
For any subset ${\cal H}$ of ${\cal D}(M)$ with $\id_M \in {\cal H}$, 
the symbols ${\cal H}_0$ and ${\cal H}_1$ denote 
the connected component and the path component of $\id_M$ in ${\cal H}$ respectively.   

From Proposition~\ref{prop_diff_stability}\,(2) we have the following criterion. 

\begin{prop}\label{prop_diff-gr_stability} Suppose $({\cal H}, {\cal K})$ is a pair of subgroups of ${\cal D}(M)$ with 
$({\cal G}, {\cal G}^c) \subset ({\cal H}, {\cal K}) \subset ({\cal D}(M), {\cal D}^c(M))$.
Then the map $\Phi$ induces the homeomorphism 
$$\Phi : ({\cal H}, {\cal K}) \cong 
({\cal B}, {\cal B}^c) \times ({\cal H} \cap {\cal F}, {\cal K} \cap {\cal F}).$$ 
Thus, the pair $({\cal H}, {\cal K})$ is $\big(\prod_{i \in \Lambda} \ell_2, \sum_{i \in \Lambda} \ell_2 \big)$-stable. 
\end{prop}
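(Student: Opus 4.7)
The plan is to derive this proposition as an almost immediate consequence of Proposition~\ref{prop_diff_stability}(2)(i), applied to the subpair $({\cal E}_1, {\cal E}_2) = ({\cal H}, {\cal K})$ of $({\cal E}, {\cal E}^c)$. The hypotheses to check are (a) ${\cal H} \subset {\cal E}$ and ${\cal K} \subset {\cal E}^c$, and (b) ${\cal H}$ is ${\cal G}$-invariant and ${\cal K}$ is ${\cal G}^c$-invariant with respect to the right action by composition.

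First, since here $f_0 = \id_M$, every diffeomorphism of $M$ restricts to an immersion on each $D_i$, so ${\cal D}(M) \subset {\cal E}$; together with the observation that the base-point support function ${\rm supp}_{f_0}$ coincides with the usual support on ${\cal D}(M)$, this gives ${\cal H} \subset {\cal D}(M) \subset {\cal E}$ and ${\cal K} \subset {\cal D}^c(M) \subset {\cal E}^c$. For the invariance, I would note that ${\cal H}$ is a subgroup of ${\cal D}(M)$ containing ${\cal G}$, so for $h \in {\cal H}$ and $g \in {\cal G} \subset {\cal H}$ the right composition $h \cdot g = h \circ g$ lies in ${\cal H}$; the same argument applied to ${\cal G}^c \subset {\cal K}$ shows that ${\cal K}$ is ${\cal G}^c$-invariant.

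With these hypotheses in place, Proposition~\ref{prop_diff_stability}(2)(i) directly yields the homeomorphism of pairs
$$\Phi : ({\cal H}, {\cal K}) \cong ({\cal B}, {\cal B}^c) \times ({\cal H} \cap {\cal F}, {\cal K} \cap {\cal F}).$$
The stability assertion then follows by combining this decomposition with Lemma~\ref{lem_diff_stability}, which provides both the identification $({\cal B}, {\cal B}^c) \cong \bigl(\prod_{i \in \Lambda} \ell_2, \sum_{i \in \Lambda} \ell_2\bigr)$ and the self-stability of this model pair. Multiplying the display above by $\bigl(\prod_{i \in \Lambda} \ell_2, \sum_{i \in \Lambda} \ell_2\bigr)$ and absorbing the extra factor into $({\cal B}, {\cal B}^c)$ recovers $({\cal H}, {\cal K})$ up to homeomorphism of pairs, which is precisely the required $\bigl(\prod_{i \in \Lambda} \ell_2, \sum_{i \in \Lambda} \ell_2\bigr)$-stability.

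There is no substantive obstacle: the technical content — construction of the maps $P_i$, $G_i$ via Moser's theorem (Theorem~\ref{thm_Moser}) and verification of conditions (i)--(iii) of Assumption~\ref{assump_stability}(4) — has already been carried out in establishing Proposition~\ref{prop_diff_stability}. The present statement is a formal corollary whose only new ingredient is the elementary observation that the ``subgroup containing ${\cal G}$ (resp.\ ${\cal G}^c$)'' hypothesis packages exactly the invariance data needed to apply the preceding proposition to this subpair.
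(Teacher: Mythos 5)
Your proposal is correct and follows exactly the route the paper intends: the paper itself presents this proposition as an immediate consequence of Proposition~\ref{prop_diff_stability}\,(2), and your verification that ${\cal H}\subset{\cal D}(M)\subset{\cal E}$, ${\cal K}\subset{\cal D}^c(M)\subset{\cal E}^c$, and that the subgroup hypotheses ${\cal G}\subset{\cal H}$, ${\cal G}^c\subset{\cal K}$ supply the required ${\cal G}$- and ${\cal G}^c$-invariance is precisely the (implicit) argument. The concluding stability step via Lemma~\ref{lem_diff_stability} is likewise the paper's.
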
 

The next example includes Theorem~\ref{thm_stability}\,(1). 

\begin{example} 
The pairs $({\cal D}_X(M), {\cal D}^c_X(M))$ and $({\cal D}_X(M)_i, {\cal D}_X^c(M)_i)$ $(i=0,1)$ are $\big(\prod_{i \in \Lambda} \ell_2, \sum_{i \in \Lambda} \ell_2 \big)$-stable for any subset $X$ of $M - \cup_{i\in\Lambda}{\rm Int}\,E_i$.  
Indeed, by the definition of ${\cal G}$ we have 
${\cal G} \subset {\cal D}_X(M)_1$ and 
${\cal G}^c \subset {\cal D}_X^c(M)_1^\ast \subset {\cal D}_X^c(M)_1$.  
Hence, the conclusion follows from Proposition~\ref{prop_diff-gr_stability}. 
\end{example} 

Now we have come to the position to apply 
the characterization of $(\prod^\omega \ell_2, \sum^\omega \ell_2)$-manifolds (Theorem~\ref{thm_(s,sf)}) based upon the stability (Proposition~\ref{prop_diff-gr_stability}). 

\begin{proof}[\bf Proof of Theorem~\ref{thm_main}\,(1)] It suffices to prove that the pair $(G_0, H)$ satisfies the conditions (i) $\sim$ (iii) in Theorem~\ref{thm_(s,sf)}\,(1). 
\begin{itemize}
\item[(i)\ ] By \cite[Theorem 1.1]{Ya5} 
the group $G_0$ is a separable completely metrizable ANR. 

\item[(ii)\,]  
(a) By the assumption $H$ is $F_\sigma$ in $G_0$.  
(b) Since $(G_c)_1^\ast \subset H$ and $(G_c)_1^\ast$ is homotopy dense in $G_0$ 
by \cite[Theorem 1.2]{Ya5}, the subgroup $H$ is also homotopy dense in $G_0$. 

\item[(iii)]
Since ${\cal G}^c \subset (G_c)_1^\ast$ and so $({\cal G}, {\cal G}^c) \subset (G_0, H) \subset ({\cal D}(M), {\cal D}^c(M))$, 
the $(\prod^\omega \ell_2, \sum^\omega \ell_2)$-stability of $(G_0, H)$ follows from Proposition~\ref{prop_diff-gr_stability}.  
\end{itemize}
This completes the proof.  
\end{proof}

%%%%%%%%%%%%%%%

\section{Stability property of homeomorphism groups} 

Stability properties of homeomorphism groups and their subgroups have already been studied by many authors \cite{Ge1, Ge2, Ke, KW, SW, Ya3}. 
K.\,Sakai and R.Y.\,Wong \cite{SW} showed that for Euclidean polyhedra $X$ and $Y$ the triples of homeomorphism groups and spaces of embeddings: 
$$\mbox{$({\cal H}(X), {\cal H}^{{\rm LIP}}(X), {\cal H}^{\rm PL}(X))$ \ \ and \ \ 
$({\rm Emb}\,(X, Y), {\rm Emb}^{{\rm LIP}}(X, Y), {\rm Emb}^{\rm PL}(X, Y))$}$$ 
are $(s, \Sigma, \sigma)$-stable. 
In \cite[Section 3]{Ya3} we discussed 
the case that $X$ is a non-compact polyhedron and showed that, for instance, 
the tuple
$$({\mathcal H}(X), {\mathcal H}^{\rm loc\,LIP}(X), {\mathcal H}^{\rm LIP}(X), {\mathcal H}^{\rm LIP, c}(X), 
{\mathcal H}^{\rm PL}(X), {\mathcal H}^{\rm PL, c}(X))$$ 
is $(s^{\infty}, \Sigma^{\infty}, s^\infty_b, \Sigma^{\infty}_f, \sigma^{\infty}, \sigma^{\infty}_f)$-stable. 
These arguments are based upon the Morse's $\mu$-length of arcs. 

In this section we retrace these arguments due to the formulation in Section 3.3 
and show that the pairs 
$({\cal H}(X), {\cal H}^c(X))$ and $({\cal H}(X; \mu), {\cal H}^c(X; \mu))$ are 
stable with respect to the pair $(s^{\infty}, s^{\infty}_f) \cong (\prod^\omega \ell_2, \sum^\omega \ell_2)$ (Theorem~\ref{thm_stability}\,(2)(3)).
Comparing with \cite{SW, Ya3}, here our emphasis is put on measure-preserving homeomorphisms. 

The following notations are used below; $C(X, Y)$ is 
the space of continuous maps $f : X \to Y$ endowed with the compact-open topology, ${\cal E}(X, Y)$ denotes the subspace of $C(X, Y)$ 
consisting of topological embeddings.  
For a subset $A$ of $X$ let 
$${\cal C \cal E}(X, A; Y) = \big\{ f \in C(X, Y) \mid f|_A : A \to Y \text{ is a topological embedding}\big\}$$ and 
${\cal H}(X, A) = \{ h \in {\cal H}(X) \mid h(A) = A\}$. 
For any subset ${\cal F}$ of ${\cal H}(X)$ with $\id_X \in {\cal F}$, 
the symbols ${\cal F}_0$ and ${\cal F}_1$ denote 
the connected component and the path component of $\id_X$ in ${\cal F}$ respectively.   
We regard as $s = (-1, 1)^\infty$ instead of ${\Bbb R}^\infty$ if necessary, and  use the symbol $(s^\Lambda, s^\Lambda_f)$ to denote the pair
$\big(\prod_{i \in \Lambda} s, \sum_{i \in \Lambda} s \big) \cong \big(\prod_{i \in \Lambda} \ell_2, \sum_{i \in \Lambda} \ell_2 \big)$ for notational simplicity. 

\subsection{Morse's $\mu$-length of arcs} \mbox{} 

Suppose $(X, d)$ is a metric space and $A$ is an arc in $X$. 
The arc $A$ admits a canonical linear order $\leq$ unique up to the reversion. 
For each $k \geq 1$ set $$S_k = \{ {\boldsymbol a} = (a_0, a_1, \cdots, a_k) \in A^{k+1} \mid a_0 \leq a_1 \leq \cdots \leq a_k\},$$ 
$$\mbox{$\delta_k({\boldsymbol a}) = \min\{d(a_{i-1}, a_i) \mid i = 1, \cdots, k\}$ \ (${\boldsymbol a} \in S_k$) \ \ and \ \  
$\mu_k(A) = \sup\{\delta_k({\boldsymbol a}) \mid {\boldsymbol a} \in S_k\}$.}$$ 
The $\mu$-length of $A$ is defined by $$\mu(A) = \sum_{k=1}^\infty 2^{-k} \mu_k(A).$$ 
We use the following property of the quantity $\mu(A)$. 

\begin{lemma}\label{lem_mu-length} {\rm  (\cite[\S1.~pp.~197--202]{SW}, cf. \cite[Lemma 4.3]{Ya3})} 
\begin{itemize} 
\item[(i)\,] For each $f \in {\cal E}([-1,1], X)$ there is a unique $t_f \in (-1, 1)$ such that 
$\mu(f([-1, t_f])) = \mu(f([t_f, 1]))$.
\item[(ii)] The function \ $\gamma : {\cal E}([-1,1], X) \to (-1,1)$, $f \mapsto t_f$, \ is continuous. 
\end{itemize}
\end{lemma}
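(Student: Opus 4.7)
My plan is to reduce both parts to two properties of the function
$$\Phi : {\cal E}([-1,1], X) \times [-1, 1] \lra {\Bbb R}, \ \ \Phi(f, t) := \mu(f([-1, t])) - \mu(f([t, 1])).$$
Specifically, I will establish (A) that $\Phi$ is jointly continuous, and (B) that for each fixed $f$, the slice $\Phi(f, \cdot)$ is strictly increasing on $[-1,1]$ with $\Phi(f, -1) = -\mu(f([-1,1])) < 0$ and $\Phi(f, 1) = \mu(f([-1,1])) > 0$. Given these, (i) is immediate from the intermediate value theorem together with strict monotonicity. Part (ii) is the standard implicit-function-type consequence: for any $\e > 0$, pick $f$ in a neighborhood of $f_0$ small enough that $\Phi(f, t_{f_0} \pm \e)$ retain the signs of $\Phi(f_0, t_{f_0} \pm \e)$; strict monotonicity in $t$ then pins $t_f$ inside $(t_{f_0} - \e, t_{f_0} + \e)$.

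For (A), I would first check joint continuity of $(f, t) \mapsto \mu_k(f([-1, t]))$ for each fixed $k$. The arc $f([-1,t])$ depends continuously on $(f,t)$ in the Hausdorff metric, and $\mu_k$ is the supremum of the continuous function $\delta_k$ over the compact set $S_k \subset f([-1,t])^{k+1}$, which itself varies Hausdorff-continuously with $(f,t)$; hence $\mu_k$ is jointly continuous. Locally each $\mu_k$ is uniformly bounded by the diameter of a compact neighborhood of $f_0([-1,1])$, so the series $\sum_k 2^{-k} \mu_k$ converges uniformly on a neighborhood of $(f_0, t_0)$ and yields a continuous limit. The same argument applies to $\mu(f([t,1]))$, proving joint continuity of $\Phi$.

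The main obstacle is (B), strict monotonicity of $t \mapsto \mu(f([-1,t]))$. Trivially $\mu_k(f([-1,s])) \leq \mu_k(f([-1,t]))$ for $s < t$, so the issue is promoting this to a strict inequality after summing. Set $A = f([-1,s])$, $B = f([-1,t])$, $p = f(s)$, $q = f(t)$, and $\eta := d(p,q) > 0$ (which holds by injectivity of $f$). Given a near-optimal $(k+1)$-tuple $(a_0, \dots, a_k) \in S_k(A)$, one may push $a_k$ forward to $p$ without decreasing $\delta_k$, and then append $q$ to form a tuple $(a_0, \dots, a_k, q) \in S_{k+1}(B)$; for $k$ large enough that $\mu_k(A) < \eta$, this shows $\mu_{k+1}(B) \geq \mu_k(A)$. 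Slightly more careful bookkeeping—inserting an intermediate point in the proper sub-arc $f([s,t])$ to refine the partition—yields $\mu_{k+1}(B) > \mu_{k+1}(A)$ strictly for some $k$, so $\mu(B) > \mu(A)$. This is essentially Morse's original computation, carried out in detail in \cite[\S 1, pp.\,197--202]{SW} (cf.\ \cite[Lemma 4.3]{Ya3}), and in a full write-up I would simply cite that argument. The symmetric claim $\mu(f([s,1])) > \mu(f([t,1]))$ is identical.
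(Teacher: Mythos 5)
The paper offers no proof of this lemma at all: it is imported verbatim from \cite[\S1, pp.\,197--202]{SW} (cf.\ \cite[Lemma 4.3]{Ya3}), so there is no in-text argument to compare yours against. Your architecture --- reduce to (A) joint continuity of $\Phi$ and (B) strict monotonicity of $t \mapsto \mu(f([-1,t]))$, then apply the intermediate value theorem and the standard ``sign-trapping'' argument for continuity of the zero --- is the right one. For (A), I would caution against the Hausdorff-metric phrasing: $\mu_k$ depends on the \emph{order} of the arc, which is not a Hausdorff-continuous datum. The clean statement is that $\mu_k(f([-1,t]))$ is the supremum of the jointly continuous function $(f, s_0, \dots, s_k) \mapsto \min_i d(f(s_{i-1}), f(s_i))$ over the compact simplex $\{-1 \le s_0 \le \cdots \le s_k \le t\}$, which varies continuously with $t$; combined with the bound $\mu_k \le \mathrm{diam}$ and the Weierstrass test, (A) follows.

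The genuine gap is in (B). The step ``one may push $a_k$ forward to $p$ without decreasing $\delta_k$'' is false: replacing $a_k$ by the endpoint $p = f(s)$ changes the last gap to $d(a_{k-1}, p)$, which can be arbitrarily small if the arc doubles back toward $a_{k-1}$ near its endpoint. Moreover the relevant constant is not $\eta = d(p,q)$ but $\eta = d(q, A) > 0$, since you need a \emph{lower} bound for $d(a_k, q)$ over all $a_k \in A$. The correct elementary move is simply to append $q$ without touching $a_k$: the tuple $(a_0, \dots, a_k, q) \in S_{k+1}(B)$ yields $\mu_{k+1}(B) \ge \min(\mu_k(A), \eta)$. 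Strictness then requires a case analysis rather than ``refining the partition'': either some $k$ satisfies $\mu_{k+1}(A) < \min(\mu_k(A), \eta)$, in which case $\mu_{k+1}(B) > \mu_{k+1}(A)$ and hence $\mu(B) > \mu(A)$; or $\mu_{k+1}(A) \ge \min(\mu_k(A), \eta)$ for every $k$, which, combined with the facts that $\mu_k(A)$ is non-increasing in $k$ and tends to $0$ (by uniform continuity of the parametrization), forces $\mu_k(A) = 0$ for large $k$ --- impossible for a nondegenerate arc, since $k+1$ distinct ordered points give $\delta_k > 0$. Since you ultimately defer this step to \cite{SW}, your proposal is acceptable as a citation (which is all the paper does), but the sketch as written would not survive being expanded into a full proof.
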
 

%%%%%%%%%%%%%

\subsection{Selection theorem for good Radon measures} \mbox{} 

Next we recall some basic facts on good Radon measures. 
A Radon measure on a space $X$ is a 
Borel measure $\mu$ on $X$ such that 
$\mu(K) < \infty$ for any compact subset $K$ of $X$.
The measure $\mu$ is called good if $\mu(p) = 0$ for any point $p \in X$ and 
$\mu(U) > 0$ for any nonempty open subset $U$ of $X$. 
Let ${\cal M}(X)$ denote the space of all Radon measures $\nu$ on $X$ endowed with the weak topology. 
For $\mu, \nu \in {\cal M}(X)$ 
we say that $\nu$ is $\mu$-biregular if 
$\mu(A) = 0$ iff $\nu(A) = 0$ for any Borel subset $A$ of $X$. 
For $\mu \in {\cal M}(X)$ and a Borel subset $A$ of $X$ let 
\vspace{1mm} 
$$\begin{array}[c]{l}
{\cal M}^A_g(X) 
= \big\{ \nu \in {\cal M}(X) \mid \nu(A) = 0, \ \text{$\nu$ is good} \big\}
\ \ \text{and} \\[2mm] 
{\cal M}^A_g(X; \mu\mbox{-reg}) 
= \big\{ \nu \in {\cal M}_g^A(X) \mid \nu(A) = \mu(A), \text{ $\nu$ is $\mu$-biregular} \big\}.
\end{array}$$ 
We say that $h \in {\cal H}(X)$ is 
$\mu$-biregular provided   
$\mu(h(B)) = 0$ iff $\mu(B) = 0$ for any Borel subset $B$ of $X$. 
The group ${\cal H}(X)$ includes two subgroups 
\vspace{1mm} 
$$\begin{array}[c]{l}
{\cal H}(X; \mu\mbox{-reg}) = \{ h \in {\cal H}(X) \mid h \ \text{is $\mu$-biregular} \}
\ \ \text{and} \\[2mm] 
{\cal H}(X; \mu) = \{ h \in {\cal H}(X) \mid h_\ast \mu = \mu \ 
(\mbox{i.e., $h$ is $\mu$-preserving})\}.
\end{array}$$ 

We need Oxtoby-Ulam theorem (\cite{OU}) and Fathi's selection theorem (\cite{Fa}). 

\begin{thm}\label{thm_Radon_measure} 
Suppose $N$ is a compact connected $n$-manifold possibly with boundary 
and $\mu \in {\cal M}_g^\partial(N)$. 
\begin{itemize}
\item[(1)] For any $\nu \in {\cal M}_g^{\partial}(N)$ with $\nu(N) = \mu(N)$
there is $h \in {\cal H}_\partial(N)$ such that $h_\ast \mu = \nu$. 

\item[(2)] There exists a continuous map 
$$\sigma : ({\cal M}_g^\partial(N; \mu\mbox{-reg}), \mu) \to ({\cal H}_\partial(N; \mu\mbox{-reg})_1, \id_N)$$   
such that $\sigma(\nu)_\ast \mu = \nu$ 
for any $\nu \in {\cal M}_g^{\partial}(N; \mu\mbox{-reg})$. 
\end{itemize}
\end{thm}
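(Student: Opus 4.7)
The plan is to derive this from two classical results which the author has explicitly flagged: the Oxtoby--Ulam theorem \cite{OU} for Part (1), and Fathi's continuous selection theorem \cite{Fa} for Part (2). Neither part involves new input beyond these, so the work lies in setting up the statement in precisely the form needed and verifying the path-component claim in Part (2).

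For Part (1), I would invoke the Oxtoby--Ulam theorem in its compact-manifold-with-boundary version: given two good Radon measures on a compact connected $n$-manifold $N$ with the same total mass and both vanishing on $\partial N$, there is a boundary-fixing homeomorphism carrying one to the other. The standard route is to choose a collar $\partial N \times [0,1) \hookrightarrow N$, use the boundary-vanishing hypothesis $\mu(\partial N) = \nu(\partial N) = 0$ to push the masses into the interior via homeomorphisms supported in the collar, and then apply the classical Oxtoby--Ulam statement on the remaining interior cell-decomposition, realizing the global $h$ as the identity on $\partial N$.

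For Part (2), I would apply Fathi's selection theorem, which is exactly designed to produce a continuous map $\sigma$ from a space of $\mu$-biregular good measures into the group of $\mu$-biregular homeomorphisms with $\sigma(\nu)_\ast \mu = \nu$. The formulation in \cite{Fa} can be normalized so that $\sigma(\mu) = \id_N$ (by composing with $\sigma(\mu)^{-1}$ if necessary, which stays $\mu$-biregular). To conclude that $\sigma(\nu) \in \mathcal{H}_\partial(N; \mu\text{-reg})_1$, I would observe that $\mathcal{M}_g^\partial(N; \mu\text{-reg})$ is convex: a convex combination of two good $\mu$-biregular measures of total mass $\mu(N)$ remains good, $\mu$-biregular and of the correct total mass, and the boundary vanishes on any such combination. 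Hence the domain is path-connected, and for any $\nu$ the straight-line path $t \mapsto (1-t)\mu + t\nu$ pushes through $\sigma$ to a path in $\mathcal{H}_\partial(N; \mu\text{-reg})$ from $\id_N$ to $\sigma(\nu)$.

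The main obstacle is simply the bookkeeping to ensure that Fathi's selection as stated in \cite{Fa} actually produces a map with the exact target $\mathcal{H}_\partial(N; \mu\text{-reg})_1$ rather than the slightly larger $\mathcal{H}_\partial(N; \mu\text{-reg})$; this is handled cleanly by the convexity/normalization argument above. No genuinely new geometric construction is required beyond what is in \cite{OU} and \cite{Fa}, so the entire proof should amount to citing these theorems and checking that the normalization $\sigma(\mu) = \id_N$ together with the convexity of the domain delivers the path-component conclusion.
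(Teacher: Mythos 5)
Your proposal matches the paper's approach exactly: the paper gives no proof of this theorem, presenting it as a direct citation of the Oxtoby--Ulam homeomorphic-measures theorem \cite{OU} for part (1) and of Fathi's selection theorem \cite{Fa} for part (2). The extra bookkeeping you supply --- the collar argument reducing the boundary case of (1) to the classical statement, the normalization $\sigma(\mu) = \id_N$, and the convexity of ${\cal M}_g^\partial(N; \mu\mbox{-reg})$ forcing the image of $\sigma$ into the path component of $\id_N$ --- is correct and fills in details the paper delegates to the references.
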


A typical example of a good Radon measure is the Lubesgue measure $m$ on ${\Bbb R}^n$ $(n \geq 1)$. 
For any subpolyhedron $K$ of ${\Bbb R}^n$ it is seen that  
${\cal H}^{\rm PL}(K) \subset {\cal H}(K; m\mbox{-reg})$. 
Consider the $n$-cube ${\Bbb B}^n = [-1,1]^n$ in ${\Bbb R}^n$. 
We identify the interval $[-1, 1]$ with the segment in ${\Bbb B}^n$ connecting the two points $(\pm 1, 0,\cdots, 0)$. (This means that $t \in [-1, 1]$ represents the point $(t, 0, \cdots, 0) \in {\Bbb B}^n$.) 

Consider a triple $(E, A, \mu)$, where 
(a) $E$ is a closed $n$-disk and $A$ is an arc in $E$ such that 
$(E, A) \cong ({\Bbb B}^n, [-1,1])$ (i.e., $A$ is a unknoted proper arc in $E$) and 
(b) $\mu \in {\cal M}_g^{A \cup \partial E}(E)$ for $n \geq 2$ and it is an empty structure for $n = 1$ (so that the symbol $\mu$ can be eliminated from the notation). 

\begin{lemma}\label{lem_zeta} 
For any homeomorphism $\theta : [-1,1] \cong A$ (i.e., a parametrization of $A$ by $[-1,1]$) there exists a map 
$$\zeta : ((-1, 1), 0) \to ({\cal H}_\partial(E, A; \mu)_1, \id_E)$$ 
such that $\zeta(t)(\theta(t)) = \theta(0)$ for each $t \in (-1, 1)$. 
\end{lemma}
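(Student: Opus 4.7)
The plan is to produce a continuous family $\eta$ of homeomorphisms of $(E,A)$ that slides $\theta(t)$ to $\theta(0)$ (ignoring $\mu$), and then to correct $\eta$ into a $\mu$-preserving family $\zeta$ by invoking the parametric Oxtoby--Ulam selection theorem (Theorem~\ref{thm_Radon_measure}(2)) applied to a suitable cut of $E$ along a codim-$1$ disk containing $A$.

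Since $A$ is unknotted in $E$, one may reduce to $(E, A) = ({\Bbb B}^n, [-1,1])$. The case $n = 1$ is immediate: no measure is present and $\zeta(t)$ can be taken as the PL homeomorphism of $[-1, 1]$ fixing the endpoints, linear on $[-1, \theta(t)]$ and on $[\theta(t), 1]$, sending $\theta(t)$ to $\theta(0)$; continuity in $t$ and $\zeta(0) = \id_E$ are then clear. So assume $n \geq 2$. First I would choose a proper PL $(n-1)$-disk $D$ in $E$ with $A \subset D$ and $\mu(D) = 0$: take $D = A$ when $n = 2$, and for $n \geq 3$ use that $\mu$ is a good Radon measure to perturb a fixed PL $(n-1)$-disk through $A$ so that its measure becomes zero. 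Then $E \setminus D$ splits into two open $n$-cells $E^+$ and $E^-$. Next I would construct a continuous family $\eta : ((-1,1),0) \to ({\cal H}_\partial(E, A)_1,\id_E)$ with $\eta(t)(\theta(t)) = \theta(0)$, produced by a PL sliding along $A$ supported in a thin neighborhood of $A$ inside $D$ and extended trivially in the direction transverse to $D$, so that $\eta(t)$ also preserves $D$ (and hence each $\overline{E^\pm}$) setwise.

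Set $\nu_t := \eta(t)_\ast \mu$. Since $\eta(t)$ preserves each $\overline{E^\pm}$ and $\mu(D \cup \partial E) = 0$, one has $\nu_t \in {\cal M}_g^{D \cup \partial E}(E; \mu\mbox{-reg})$ with $\nu_t(\overline{E^\pm}) = \mu(\overline{E^\pm})$. Viewing each $\overline{E^\pm}$ as a compact connected $n$-manifold with boundary $D \cup (\partial E \cap \overline{E^\pm})$, Theorem~\ref{thm_Radon_measure}(2) applied on each side would yield continuous families $\sigma_t^\pm \in {\cal H}_\partial(\overline{E^\pm}; \mu\mbox{-reg})_1$ with $\sigma_0^\pm = \id$ and $(\sigma_t^\pm)_\ast \mu|_{\overline{E^\pm}} = \nu_t|_{\overline{E^\pm}}$. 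Gluing along $D$ gives $\sigma_t \in {\cal H}_{\partial E \cup D}(E; \mu\mbox{-reg})_1$ with $(\sigma_t)_\ast \mu = \nu_t$ and $\sigma_0 = \id_E$. Setting
$$\zeta(t) := \sigma_t^{-1} \circ \eta(t),$$
one checks directly that $\zeta(t)_\ast \mu = \mu$, $\zeta(t)$ fixes $\partial E$ pointwise, $\zeta(t)(A) = A$ (since $\sigma_t$ fixes $A \subset D$ and $\eta(t)(A) = A$), and $\zeta(t)(\theta(t)) = \sigma_t^{-1}(\theta(0)) = \theta(0)$, with $\zeta(0) = \id_E$. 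Continuity of $t \mapsto \zeta(t)$ and isotopy to $\id_E$ within ${\cal H}_\partial(E, A; \mu)$ follow from the corresponding properties of $\eta$ and $\sigma$.

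The main obstacle will be the selection step: Theorem~\ref{thm_Radon_measure}(2) as stated fixes only the boundary, whereas the target lies in the subgroup that also preserves $A$. Cutting $E$ along a measure-zero codim-$1$ disk $D \supset A$ reduces the matching of $\mu$ and $\nu_t$ on each of the two pieces to the stated hypothesis of the theorem, after which gluing along $D$ restores a global homeomorphism that fixes $A$ pointwise. The delicate ingredient is arranging, for $n \geq 3$, that the preliminary sliding $\eta(t)$ preserves $D$ setwise; this is achieved by performing the slide entirely within $D$ and extending it by the identity in the normal direction, which is possible because $A$ is unknotted and $D$ can be taken to carry a product neighborhood $D \times (-\varepsilon, \varepsilon)$ in $E$.
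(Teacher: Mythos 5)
Your overall strategy --- cut $E$ along a codimension-one disk $D\supset A$ of $\mu$-measure zero, slide $\theta(t)$ to $\theta(0)$ by a PL homeomorphism $\eta(t)$ preserving the two halves, and then correct the measure on each half by the selection theorem --- is essentially the paper's (there $D$ is the equatorial disk $B_0=\{x_n=0\}$ of ${\Bbb B}^n$ and your $\eta$ is its $\xi$). But there is a genuine gap at the selection step: you assert that $\nu_t=\eta(t)_\ast\mu$ lies in ${\cal M}_g^{D\cup\partial E}(E;\mu\mbox{-reg})$, and for that you need $\eta(t)$ to be $\mu$-biregular. A PL homeomorphism is $m$-biregular (Lebesgue), but it need not be $\mu$-biregular for an arbitrary good Radon measure $\mu$: take, for instance, $\mu=m+\lambda$ with $\lambda$ a nonatomic probability measure on a Cantor set $C\subset{\rm Int}\,E$ of Lebesgue measure zero such that $\eta(t)(C)\cap C=\emptyset$; then $B=\eta(t)^{-1}(C)$ satisfies $\mu(B)=0$ while $\mu(\eta(t)(B))=\lambda(C)=1$. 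Without $\mu$-biregularity Theorem~\ref{thm_Radon_measure}\,(2) simply does not apply to $\nu_t$; part (1) would still produce \emph{some} homeomorphism realizing $\nu_t$ on each half, but not a selection continuous in $t$ with value $\id$ at $t=0$, which is exactly what you need.

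The paper closes this hole with a preliminary normalization that your proposal omits: it first builds a homeomorphism $\overline{\theta}:({\Bbb B}^n,[-1,1])\cong(E,A)$ extending $\theta$ such that ${\overline{\theta}\,}^\ast\mu$ is $m$-biregular --- this uses Theorem~\ref{thm_Radon_measure}\,(1) (Oxtoby--Ulam) on each half of ${\Bbb B}^n$ to turn the pulled-back measure into a constant multiple of Lebesgue there --- and only then runs the slide-and-correct argument, where the PL slide is automatically biregular for the normalized measure; the answer is transported back by conjugation with $\overline{\theta}$. Once you insert this conjugation step (or otherwise arrange that $\eta(t)$ is $\mu$-biregular), the rest of your argument goes through: goodness of $\nu_t$, equality of total masses on each half, vanishing on $D\cup\partial E$, the gluing of $\sigma_t^{\pm}$ along $D$, and the verifications $\zeta(t)_\ast\mu=\mu$ and $\zeta(t)(\theta(t))=\theta(0)$ are all correct as written.
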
 

\begin{proof} 
When $n = 1$,  
the assertion is obvious. Below we assume that $n \geq 2$. 

(1) First we treat the case where $(E, A) = ({\Bbb B}^n, [-1,1])$, $\theta = \id_{[-1,1]}$ 
and $\mu$ is $m$-biregular. 
Consider the decomposition of the $n$-cube, 
${\Bbb B}^n = B_+ \cup_{B_0} B_-$, where 
$$\mbox{$B_+ = \{ x \in {\Bbb B}^n \mid x_n \geq 0\}$, \ 
$B_- = \{ x \in {\Bbb B}^n \mid x_n \leq 0\}$, \ 
$B_0 = \{ x \in {\Bbb B}^n \mid x_n = 0\} 
= B_+ \cap B_1$.}$$ 
We can easily find a map 
$$\xi : ((-1, 1), 0) \to \big({\cal H}_\partial^{\rm PL}({\Bbb B}^n, [-1,1]), \id_{{\Bbb B}^n}\big)$$ 
such that $\xi(t)(t) = 0$ and 
$\xi(t)(B_\pm) = B_\pm$. 
For $\mu_\pm := \mu|_{B_\pm} \in {\cal M}_g^\partial(B_\pm)$, 
Theorem~\ref{thm_Radon_measure}\,(2) induces maps 
$$\sigma_\pm : \big({\cal M}_g^\partial(B_\pm; \mu_\pm\mbox{-reg}), \mu_\pm\big) \to \big({\cal H}_\partial(B_\pm; \mu_\pm\mbox{-reg})_1, \id_{B_\pm})$$   
such that $\sigma_\pm(\nu)_\ast \mu_\pm = \nu$ 
for any $\nu \in {\cal M}_g^{\partial}(B_\pm; \mu_\pm\mbox{-reg})$. 
Since 
$$\xi(t)|_{B_\pm} \in {\cal H}^{\rm PL}(B_\pm) 
\subset {\cal H}(B_\pm; m\mbox{-reg}) = {\cal H}(B_\pm; \mu_\pm\mbox{-reg})$$ 
we can define the map
$\zeta : (-1, 1) \to {\cal H}_\partial({\Bbb B}^n, [-1,1]; \mu)$ by 
$$\mbox{$\zeta(t)|_{B_\pm} 
= \big(\sigma_\pm((\xi(t)|_{B_\pm})_\ast \mu_\pm)\big)^{-1} \xi(t)|_{B_\pm}$.}$$ 

(2) To reduce the general case to the case (1),  
we construct a homeomorphism 
$\overline{\theta} : ({\Bbb B}^n, [-1,1]) \cong (E, A)$ such that 
$\overline{\theta}|_{[-1,1]} = \theta$ and the pull-back 
${\overline{\theta}\,}^\ast\!\mu$ is $m$-biregular. 
Since $A$ is unknoted in $E$, 
the homeomorphism $\theta : [-1,1] \cong A$ admits 
an extension 
$\overline{\theta}_1 : {\Bbb B}^n \cong E$. 
For $\mu_1 := {\overline{\theta}_1\!}^\ast \mu \in {\cal M}_g^{\partial}({\Bbb B}^n)$, since $\mu_1([-1,1]) = 0$, 
we can find $\overline{\theta}_2 \in {\cal H}_{[-1,1] \cup \partial}({\Bbb B}^n)$ such that $\mu_1(\overline{\theta}_2(B_0)) = 0$. 
Then $\mu_2 := {\overline{\theta}_2\!}^\ast \mu_1 \in {\cal M}_g^{B_0 \cup \partial}({\Bbb B}^n)$ restricts to $\mu_2|_{B_\pm} \in {\cal M}_g^{\partial}(B_\pm)$ and Theorem~\ref{thm_Radon_measure}\,(1) 
yields homeomorphisms 
${\overline{\theta}_3\!}^\pm \in {\cal H}_\partial(B_\pm)$ such that 
$({\overline{\theta}_3\!}^\pm)^\ast \mu_2|_{B_\pm} = c_\pm m|_{B_\pm}$, where $c_\pm = \mu_2(B_\pm)/m(B_\pm)$. 
Define $\overline{\theta}_3 \in {\cal H}_{B_0 \cup \partial}({\Bbb B}^n)$ by $\overline{\theta}_3|_{B_\pm} = {\overline{\theta}_3\!}^\pm$. 
Then ${\overline{\theta}_3\!}^\ast \mu_2$ is $m$-biregular and hence 
$\overline{\theta} = \overline{\theta}_1 \overline{\theta}_2 \overline{\theta}_3$ satisfies the required conditions. 

Since $\mu' := {\overline{\theta}\,}^\ast \!\mu$ is $m$-biregular, 
the case (1) yields a map 
$$\zeta' : ((-1, 1),0) \to \big({\cal H}_\partial({\Bbb B}^n, [-1,1]; \mu'), \id_{{\Bbb B}^n}\big)$$ 
such that $\zeta'(t)(t) = 0$ for any $t \in (-1, 1)$. 
The required map $\zeta$ is defined by 
$$\mbox{$\zeta(t) = \overline{\theta} \, \zeta'(t) \,{\overline{\theta}\,}^{-1}$ \ \ $(t \in (-1,1))$.}$$ 
\vskip -7mm 
\end{proof}

Suppose $X$ is a metric space. 

\begin{lemma}\label{lem_property_mu}
For any $f_0 \in {\cal C \cal E}(E, A; X)$ there exist maps 
$$\mbox{
$\phi : ({\cal C \cal E}(E, A; X), f_0) \to (s, 0)$ \ \ and \ \  
$\psi : (s, 0) \to ({\cal H}_\partial(E, A; \mu)_1, \id_E)$,}$$ 
which satisfy the following conditions:  
\begin{itemize}
\item[(i)\ ] $\phi(f) = \phi(f')$ for any $f, f' \in {\cal C \cal E}(E, A; X)$ with $f = f'$ on $A$. 
\vskip 0.5mm 
\item[(ii)\,] $\phi(f \,\mbox{\small $\circ$} \, \psi(\phi(f))^{-1}) = 0$ for any $f \in {\cal C \cal E}(E, A; X)$. 
\vskip 0.5mm 
\item[(iii)] $\phi(f\psi(t)) = t$ for any $(f, t) \in \phi^{-1}(0) \times s$. 
\end{itemize} 
\end{lemma}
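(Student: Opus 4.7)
The idea is to make $\psi(t)$ a disjointly supported product of homeomorphisms provided by Lemma~\ref{lem_zeta}, one for each coordinate of $s$, and to read off the coordinates of $\phi(f)$ via the Morse $\mu$-midpoint functional $\gamma$ from Lemma~\ref{lem_mu-length}. Fix a parametrization $\theta : [-1,1] \cong A$ and choose pairwise disjoint closed subintervals $[a_i, b_i] \subset (-1, 1)$ with $b_i - a_i \to 0$. Then select pairwise disjoint closed $n$-disks $E_i \subset E$ satisfying $A \cap E_i = A_i := \theta([a_i, b_i])$, $(E_i, A_i) \cong ({\Bbb B}^n, [-1, 1])$, ${\rm diam}\,E_i \to 0$, and $\mu(\partial E_i) = 0$. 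The last property, obtainable by the standard approximation of open sets by sets with Radon-measure-zero boundary, ensures $\mu|_{E_i} \in {\cal M}_g^{A_i \cup \partial E_i}(E_i)$, which puts Lemma~\ref{lem_zeta} at our disposal on each triple $(E_i, A_i, \mu|_{E_i})$.

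For each $i$, let $p_i \in {\rm Int}\,A_i$ be the unique Morse $\mu$-midpoint of $f_0|_{A_i}$ guaranteed by Lemma~\ref{lem_mu-length}, and choose a parametrization $\theta_i : [-1, 1] \cong A_i$ with $\theta_i(0) = p_i$. Lemma~\ref{lem_zeta} supplies a map $\zeta_i : ((-1, 1), 0) \to ({\cal H}_\partial(E_i, A_i; \mu|_{E_i})_1, \id_{E_i})$ satisfying $\zeta_i(t)(\theta_i(t)) = p_i$. I define
$$
\psi(t)\big|_{E_i} = \zeta_i(t_i), \quad \psi(t)\big|_{E \setminus \bigcup_i E_i} = \id_E, \qquad \phi(f) = \bigl(\gamma(f \circ \theta_i)\bigr)_{i \geq 1}.
$$
Then $\psi(0) = \id_E$ because each $\zeta_i(0) = \id$, and $\phi(f_0) = 0$ by the choice $\theta_i(0) = p_i$. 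Continuity of $\psi$ and its membership in ${\cal H}_\partial(E, A; \mu)_1$ follow from ${\rm diam}\,E_i \to 0$ (which keeps cofinitely many factors uniformly small on any compact subset of $E$) and from the isotopy $[0,1] \ni s \mapsto \psi(st)$.

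Condition (i) is immediate: $\phi(f)$ depends on $f$ only through $f|_A$. For (iii), disjointness of the $E_j$ gives $\psi(t)|_{A_i} = \zeta_i(t_i)|_{A_i}$, so $f\psi(t)\theta_i$ parametrizes the arc $f(A_i) \subset X$ (note $\zeta_i(t_i)(A_i) = A_i$), whose Morse midpoint is $f(p_i)$ by $\phi_i(f) = 0$; the relation $\zeta_i(t_i)\theta_i(t_i) = p_i$ then shows this midpoint is attained at parameter $t_i$, and uniqueness in Lemma~\ref{lem_mu-length} yields $\gamma(f\psi(t)\theta_i) = t_i$. For (ii), set $g := f\psi(\phi(f))^{-1}$; from $\zeta_i(\phi_i(f))^{-1}(\theta_i(0)) = \theta_i(\phi_i(f))$ it follows that the Morse midpoint $f(\theta_i(\phi_i(f)))$ of $f(A_i)$ is attained by $g\theta_i$ at parameter $0$, whence $\phi_i(g) = 0$. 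The main obstacle will be the simultaneous geometric realization of $\{E_i\}$ with all four properties (disjointness, shrinking diameters, pair standardness $(E_i, A_i) \cong ({\Bbb B}^n, [-1,1])$, and $\mu$-measure-zero boundaries), together with the careful verification that the infinite composition $\psi(t)$ is continuous into ${\cal H}_\partial(E, A; \mu)$ with the compact-open topology; both points rest on the shrinking-diameter condition.
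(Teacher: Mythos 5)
Your construction is essentially the paper's own proof: the same disjoint family of shrinking subdisks meeting $A$ in standard subarcs with $\mu$-null boundaries, the same use of the Morse midpoint functional $\gamma$ to define the coordinates of $\phi$, the same application of Lemma~\ref{lem_zeta} with parametrizations normalized so that $\gamma(f_0\theta_i)=0$, and the same three verifications via the identities $\zeta_i(t_i)(\theta_i(t_i))=\theta_i(0)$. The only differences are cosmetic (you spell out how to realize the subdisks and flag the continuity of the infinite piecewise-defined $\psi(t)$, which the paper leaves implicit), so the argument is correct and matches the original.
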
 

\begin{proof}
There exists a disjoint family of closed $n$-disks $D_k$ $(k \in {\Bbb N})$ in $E$ such that ${\rm diam}\, D_k \to 0$ $(k \to \infty)$, 
$\mu(\partial D_k) = 0$ and $(D_k, D_k \cap A) \cong ({\Bbb B}^n, [-1,1])$. 
Recall the map $\gamma$ in Lemma~\ref{lem_mu-length}. 
For each $k \in {\Bbb N}$ 
choose a homeomorphism $\theta_k : [-1,1] \cong D_k \cap A$ such that 
$\gamma(f_0\theta_k) = 0$. Then 
we can apply Lemma~\ref{lem_zeta} to the triple $(D_k, D_k \cap A, \mu|_{D_k})$ and $\theta_k$ to obtain the map  
$$\zeta_k : ((-1, 1), 0) \to ({\cal H}_\partial(D_k, D_k \cap A; \mu|_{D_k})_1, \id_{D_k})$$ 
such that $\zeta_k(t)(\theta_k(t)) = \theta_k(0)$ for each $t \in (-1, 1)$. 

Define the maps $\phi$ and $\psi$ by 
\begin{itemize} 
\item[(a)] $\phi(f) = (\gamma(f \theta_k))_{k \in {\Bbb N}}$ \ \ $(f \in {\cal C \cal E}(E, A; X))$, 
\item[(b)] $\psi(t)|_{D_k} = \zeta_k(t_k)$ \ $(k \in {\Bbb N})$ \ and \ $\psi(t)|_{E - \cup_k D_k} = \id$ \ \ $(t = (t_k)_k \in s)$.
\end{itemize} 
It remains to verify the properties (i) $\sim$ (iii). 
First note that for each $k \in {\Bbb N}$ 
$$\mbox{$f\psi(t)\theta_k 
= f \psi(t)|_{D_k} \theta_k
= f \zeta_k(t_k) \theta_k$ \ \ and \ \ 
$f\psi(t)^{-1}\theta_k 
= f(\psi(t)|_{D_k})^{-1}\theta_k
= f \zeta_k(t_k)^{-1}\theta_k$.}$$

\begin{itemize} 
\item[(i)\ ] Since $f \theta_k = f|_A \theta_k$, one sees that 
$\phi(f)$ depends only on $f|_A$. 
\vskip 0.5mm 

\item[(ii)\,] Let $t = \phi(f)$. Then we have \ \ 
$t_k = \gamma(f \theta_k)$ \ \ and \ \  
$\zeta_k(t_k)^{-1}\theta_k(0) = \theta_k(t_k)$. \ \ 
Thus, from the definition of the map $\gamma$ it follows that 
$$\mbox{$\gamma(f\psi(\phi(f))^{-1}\theta_k) 
= \gamma(f\psi(t)^{-1}\theta_k) 
= \gamma(f \zeta_k(t_k)^{-1}\theta_k) = 0$ \ \ and \ \ $\phi(f\psi(\phi(f))^{-1}) = 0$.}$$
\vskip 0.5mm 
\item[(iii)] Note that \ \ $\zeta_k(t_k)(\theta_k(t_k)) = \theta_k(0)$ \ \ and \ \ $\gamma(f \theta_k) = 0$ \ \ since $\phi(f) = 0$. Thus, it follows that 
$$\mbox{
$\gamma(f\psi(t)\theta_k) 
= \gamma(f \zeta_k(t_k) \theta_k) 
= t_k$ \ \ and \ \ 
$\phi(f\psi(t)) = t$.}$$ 
\end{itemize} 
\vskip -7mm 
\end{proof} 

%%%%%%%%%%%

\subsection{$(s^\infty, s^\infty_f)$-stability of homeomorphism groups} \mbox{} 

Suppose $M$ and $N$ are metric spaces and 
$\{ (E_i, A_i, \mu_i) \}_{i \in \Lambda}$ is a family of triples such that 
\begin{itemize}
\item[(i)\,] $\{ E_i \}_{i \in \Lambda}$ is discrete family of topological closed disks  in $M$, 
\item[(ii)] for each $i \in \Lambda$, 
\begin{itemize}
\item[(a)] $n_i := \dim\,E_i \geq 1$ and ${\rm Int}\,E_i$ is open in $M$, 
\item[(b)] $A_i$ is an arc in $E_i$ such that 
$(E_i, A_i) \cong ({\Bbb B}^{n_i}, [-1,1])$, and 
\item[(c)] if $n_i \geq 2$, then $\mu_i \in {\cal M}_g^{A_i \cup \partial}(E_i)$  and if $n_i = 1$, then $\mu_i$ is an empty structure 
(so it can be eliminated from the notations). 
\end{itemize}
\end{itemize}

Consider the subspace ${\cal E}$ of $C(M, N)$ and the subgroup ${\cal G}$ of ${\cal H}(M)$ defined by  
$$\begin{array}[c]{l}
{\cal E} = \big\{ f \in C(M, N) \mid 
f|_{A_i} : A_i \to N$ is an embedding for each $i \in \Lambda\big\}, \\[1.5mm] 
{\cal G} = \big\{ h \in {\cal H}(M) \mid 
h = \id_M \text{ on } M - \cup_{i \in \Lambda} {\rm Int}\,E_i, \  
h|_{E_i} \in {\cal H}_\partial(E_i, A_i; \mu_i)_1 \text{ for each } i \in \Lambda \big\}.
\end{array}$$ 
The group ${\cal G}$ acts continuously on the space ${\cal E}$ by the right composition. Moreover, ${\cal G}$ is path connected. In fact, the pair 
$({\cal G}, {\cal G}^c)$ is isomorphic to $\big(\prod, \sum\big)_{i \in \Lambda} {\cal H}_\partial(E_i, A_i; \mu_i)_1$ as a pair of a topological group and its subgroup. 
As a transformation group on $M$, the group ${\cal G}$ has a weak topology and 
has the multiplication supported by the family $\{ E_i \}_{i \in \Lambda}$. 
Hence, the multiplication map 
$\eta : \prod_{i \in \Lambda} {\cal G}(E_i) \to {\cal G}$ is continuous.  

We assume that $N$ includes an arc, so that ${\cal E} \neq \emptyset$ and 
we can fix a distinguished element $f_0 \in {\cal E}$. 
The associated support function for ${\cal E}$ on $M$ is defined by 
$${\rm supp}_{f_0}f = cl_M\{ x \in M \mid f(x) \neq f_0(x) \} \hspace{5mm} (f \in {\cal E}).$$  
It satisfies the conditions (i), (ii) in Assumption~\ref{assump_stability}\,(3) and the subspace ${\cal E}^c$ is ${\cal G}^c$-invariant. 

For each triple $(E_i, A_i, \mu_i)$ and $f_0|_{E_i} \in {\cal C \cal E}(E_i, A_i; N)$,  Lemma~\ref{lem_property_mu} yields two maps 
$$\mbox{
$\phi_i : ({\cal C \cal E}(E_i, A_i; N), f_0|_{E_i}) \to (s, 0)$ \ \ and \ \  
$\psi_i : (s, 0) \to ({\cal H}_\partial(E_i, A_i; \mu_i)_1, \id_{E_i})$,}$$ 
such that 
\begin{itemize}
\item[(i)\ ] $\phi_i(f) = \phi_i(f')$ for any $f, f' \in {\cal C \cal E}(E_i, A_i; N)$ with $f = f'$ on $A_i$. 
\vskip 0.5mm 
\item[(ii)\,] $\phi_i(f \, \mbox{\small $\circ$} \, \psi_i(\phi_i(f))^{-1}) = 0$ for any $f \in {\cal C \cal E}(E_i, A_i; N)$. 
\vskip 0.5mm 
\item[(iii)] $\phi_i(f\psi_i(t)) = t$ for any $(f, t) \in \phi_i^{-1}(0) \times s$. 
\end{itemize} 

We define two maps $P_i$ and $G_i$ by 
$$\mbox{$P_i : ({\cal E}, f_0) \to (s, 0); \ P_i(f) = \phi_i(f|_{E_i})$ \ \ and  \ \ 
$G_i : (s, 0) \to ({\cal G}(E_i), \id_M); \ G_i(t)|_{E_i} = \psi_i(t)$.}$$ 
The next claim follows directly from the properties of the maps $\phi_i$ and $\psi_i$.  

\begin{claim} The maps $P_i$ and $G_i$ satisfy the following conditions. 
\begin{itemize} 
\item[(0)\,] $P_i(f) = P_i(f')$ if $f, f' \in {\cal E}$ and $f = f'$ on $A_i$. 
\item[(i)\ ] $P_i(f) = 0$ if $f \in {\cal E}$ and $({\rm supp}_{f_0}f) \cap A_i = \emptyset$. 
\item[(ii)\,] $P_i(fg^{-1}) = 0$ if $(f, g) \in {\cal E} \times {\cal G}$ and $g = G_i(P_i(f))$ on $A_i$. 
\item[(iii)] $P_i(fg) = t$ if $(f, g, t) \in {\cal E} \times {\cal G} \times s$, $P_i(f) = 0$ and $g = G_i(t)$ on $A_i$. 
\end{itemize} 
\end{claim}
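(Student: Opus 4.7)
The plan is to verify each of (0), (i), (ii), (iii) as a direct translation from the properties of $\phi_i$ and $\psi_i$ supplied by Lemma~\ref{lem_property_mu}, using the defining formulas $P_i(f)=\phi_i(f|_{E_i})$ and $G_i(t)|_{E_i}=\psi_i(t)$. First I would establish (0): if $f,f'\in{\cal E}$ agree on $A_i$, then $f|_{E_i}$ and $f'|_{E_i}$ both lie in ${\cal C \cal E}(E_i,A_i;N)$ and coincide on $A_i$, so property (i) of $\phi_i$ gives $P_i(f)=\phi_i(f|_{E_i})=\phi_i(f'|_{E_i})=P_i(f')$. Condition (i) of the Claim is then immediate from (0): the hypothesis $({\rm supp}_{f_0}f)\cap A_i=\emptyset$ forces $f=f_0$ on $A_i$, whence $P_i(f)=P_i(f_0)=\phi_i(f_0|_{E_i})=0$ by the pointedness of $\phi_i$.

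For (ii), the key observation is that any $g\in{\cal G}$ preserves $A_i$ setwise (since $g|_{E_i}\in{\cal H}_\partial(E_i,A_i;\mu_i)_1$), so $g|_{A_i}$ is a self-homeomorphism of $A_i$ and $(g^{-1})|_{A_i}=(g|_{A_i})^{-1}$; the analogous statement holds for $\psi_i(P_i(f))$. Combining the hypothesis $g=G_i(P_i(f))=\psi_i(P_i(f))$ on $A_i$ with these identifications yields
\[(fg^{-1})|_{A_i} = f|_{A_i}\circ (\psi_i(P_i(f))|_{A_i})^{-1} = (f|_{E_i}\circ\psi_i(P_i(f))^{-1})|_{A_i}.\]
The auxiliary map $f|_{E_i}\circ\psi_i(P_i(f))^{-1}$ lies in ${\cal C \cal E}(E_i,A_i;N)$ (continuous on $E_i$ and an embedding on $A_i$ since $f\in{\cal E}$), so condition (0), already proved, lets me replace $(fg^{-1})|_{E_i}$ by this auxiliary map inside $\phi_i$, and then property (ii) of $\phi_i$ delivers $P_i(fg^{-1})=\phi_i(f|_{E_i}\circ\psi_i(\phi_i(f|_{E_i}))^{-1})=0$. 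Condition (iii) is handled in the same way: the hypothesis forces $(fg)|_{A_i}=(f|_{E_i}\circ\psi_i(t))|_{A_i}$, so (0) gives $P_i(fg)=\phi_i(f|_{E_i}\circ\psi_i(t))$, and property (iii) of $\phi_i$ applied to $f|_{E_i}\in\phi_i^{-1}(0)$ returns $t$.

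I expect no substantial obstacle here; the only point requiring a moment of care is the bookkeeping with restrictions of inverses, namely checking that $g\in{\cal G}$ and $\psi_i(P_i(f))$ both preserve $A_i$ setwise so that inverses restrict compatibly, and verifying that the auxiliary compositions really belong to ${\cal C \cal E}(E_i,A_i;N)$ before invoking the properties of $\phi_i$. Once these hypotheses are in place, each of the four items is a routine chase through the definitions and the three properties of $\phi_i,\psi_i$ from Lemma~\ref{lem_property_mu}.
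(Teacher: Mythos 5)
Your verification is correct and is exactly the argument the paper intends: the paper simply asserts that the claim ``follows directly from the properties of the maps $\phi_i$ and $\psi_i$,'' and your write-up carries out that direct translation, including the only genuinely delicate point, namely that elements of ${\cal G}$ and the homeomorphisms $\psi_i(t)$ preserve $A_i$ setwise so that restrictions of inverses and compositions behave as claimed before invoking properties (i)--(iii) of $\phi_i$.
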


This claim means that the maps $P_i$ and $G_i$ 
satisfy the conditions (i) - (iii) in Assumption~\ref{assump_stability}\,(4). 
Hence, we can apply the arguments in Section 3.3 to this situation. 
The pointed pairs $({\cal B}, {\cal B}^c, \alpha)$ and $({\cal F}, {\cal F}^c, f_0)$ and three maps $P$, $G$ and $F$ are defined by 
\begin{itemize}
\item[] $({\cal B}, {\cal B}^c, \alpha) = (s^\Lambda, s^\Lambda_f, 0) = \big(\prod_{i \in \Lambda}s, \sum_{i \in \Lambda} s, 0\big)$,  
\vskip 1mm 
\item[] $P : ({\cal E}, {\cal E}^c, f_0) \to (s^\Lambda, s^\Lambda_f, 0)$; \ \ 
$P(f) = (P_i(f))_{i \in \Lambda}$ \ $(f \in {\cal E})$,  
\vskip 1mm 
\item[] $G : (s^\Lambda, s^\Lambda_f, 0) \to ({\cal G}, {\cal G}^c, \id_M)$; \ \ 
$G(t) = \eta\big((G_i(t_i))_i\big)$ \ $(t = (t_i)_i \in s^\infty)$,
\vskip 1mm 
\item[] ${\cal F} = P^{-1}(0)$ \ \ and \ \ 
$F : ({\cal E}, {\cal E}^c, f_0) \to ({\cal F}, {\cal F}^c, f_0)$; \ \ 
$F(f) = f \cdot G(P(f))^{-1}$ \ $(f \in {\cal E})$.
\end{itemize}

These maps 
determine two maps $\Phi$ and $\Psi$ by  
\begin{itemize}
\item[] $\Phi : ({\cal E}, {\cal E}^c, f_0) \lra  (s^\Lambda, s^\Lambda_f, 0) \times ({\cal F}, {\cal F}^c, f_0)$; \ \ $\Phi(f) = (P(f), F(f))$,  
\vskip 1mm 
\item[] $\Psi :  (s^\Lambda, s^\Lambda_f, 0) \times ({\cal F}, {\cal F}^c, f_0) \lra ({\cal E}, {\cal E}^c, f_0)$; \ \ $\Psi(t, g) = g \cdot G(t)$. 
\end{itemize} 

The next proposition follows from Proposition~\ref{prop_stability}. 

\begin{prop}\label{prop_homeo_stability_mu} 
{\rm (1)} The maps $\Phi$ and $\Psi$ are reciplocal homeomorphisms of pairs.

{\rm (2)} {\rm (i)} If $({\cal E}_1, {\cal E}_2)$ is a subpair of $({\cal E}, {\cal E}^c)$,  
${\cal E}_1$ is ${\cal G}$-invariant and ${\cal E}_2$ is ${\cal G}^c$-invariant, then  
the map $\Phi$ restricts to the homeomorphism of the subpairs 
$$\Phi : ({\cal E}_1, {\cal E}_2) \lra (s^\Lambda, s^\Lambda_f) \times ({\cal E}_1 \cap {\cal F}, {\cal E}_2 \cap {\cal F}).$$ 
In particular, the pair $ ({\cal E}_1, {\cal E}_2)$ is $(s^\Lambda, s^\Lambda_f)$-stable.  

{\rm (ii)} If ${\cal E}_1$ is a ${\cal G}$-invariant subspace of ${\cal E}$, then 
the map $\Phi$ induces the homeomorphism of the subpairs 
$$\Phi : ({\cal E}_1, {\cal E}_1^c) \lra (s^\Lambda, s^\Lambda_f) \times ({\cal E}_1 \cap {\cal F}, {\cal E}_1^c \cap {\cal F}).$$ 
\end{prop}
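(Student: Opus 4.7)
The plan is to deduce this proposition as an immediate application of Proposition~\ref{prop_stability}; the bulk of the work consists in matching the data set up in this subsection to the hypotheses of Assumption~\ref{assump_stability}. Concretely, I would take the arc $A_i$ to play the role of the compact set $D_i$ in Assumption~\ref{assump_stability}(1), and the pointed space $(B_i,\alpha_i)$ to be $(s,0)$.

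First I would verify items (1)--(3) of Assumption~\ref{assump_stability}. The discreteness of $\{E_i\}_{i\in\Lambda}$ and the compactness of each $A_i\subset E_i$ are built into the standing hypotheses. The group ${\cal G}$ is a transformation group on $M$ with a weak topology (its elements fix $M\setminus\bigcup_i{\rm Int}\,E_i$ and the family $\{E_i\}$ is discrete) and admits the multiplication supported by $\{E_i\}$, so that Lemma~\ref{lem_comp} applies and $\eta$ is continuous. The right composition action of ${\cal G}$ on ${\cal E}$ is continuous, and the support function satisfies ${\rm supp}_{f_0}f_0=\emptyset$ together with ${\rm supp}_{f_0}(f\circ g)\subset{\rm supp}_{f_0}f\cup{\rm supp}\,g$, because at any $x$ with $g(x)=x$ and $f(x)=f_0(x)$ one has $(f\circ g)(x)=f_0(x)=f_0(g(x))$ (and on $M\setminus{\rm supp}\,g$, $f_0\circ g=f_0$).

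Next, for condition (4) of Assumption~\ref{assump_stability}, the pointed maps $P_i$ and $G_i$ were assembled from the maps $\phi_i,\psi_i$ produced by Lemma~\ref{lem_property_mu}, and the three properties (i)--(iii) are exactly the content of the Claim just proved above. Assumption~\ref{assump_stability} being fully verified, Proposition~\ref{prop_stability}(i) yields statement (1), while (2)(i) and (2)(ii) of the present proposition are Proposition~\ref{prop_stability}(ii)(a) and (ii)(b) applied to the present data (the ${\cal G}$- and ${\cal G}^c$-invariance hypotheses match verbatim).

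The final ``in particular'' assertion on stability then follows by combining the homeomorphism $\Phi$ with the identity $(s^\Lambda\times s^\Lambda,s^\Lambda_f\times s^\Lambda_f)\cong(s^\Lambda,s^\Lambda_f)$, which is a consequence of $s\cong\ell_2$ and $\ell_2^2\cong\ell_2$ exactly as in the proof of Lemma~\ref{lem_diff_stability}. The main \emph{obstacle} in this overall program is not located here but in the construction of the local maps $\phi_i,\psi_i$ behind Lemma~\ref{lem_property_mu}, where measure-preservation forces the use of the Oxtoby--Ulam and Fathi selection theorems (Theorem~\ref{thm_Radon_measure}) together with Morse's $\mu$-length of arcs; at the present stage that work is already done, and the proposition itself reduces to bookkeeping.
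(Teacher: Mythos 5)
Your proposal is correct and follows exactly the paper's route: the paper likewise derives this proposition as an immediate consequence of Proposition~\ref{prop_stability}, the verification of Assumption~\ref{assump_stability} (with $A_i$ in the role of $D_i$ and $(B_i,\alpha_i)=(s,0)$) having been carried out in the preceding Claim and surrounding discussion. Your added remarks on the support-function inequality and on deducing the ``in particular'' stability statement from $(s^\Lambda,s^\Lambda_f)^2\cong(s^\Lambda,s^\Lambda_f)$ are accurate bookkeeping consistent with the paper.
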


\begin{example} The pair $({\cal E}(M, N), {\cal E}^c(M, N))$ is $(s^\Lambda, s^\Lambda_f)$-stable. 
Indeed, the space ${\cal E}(M, N)$ is a ${\cal G}$-invariant subspace of ${\cal E}$ and  
the map $\Phi$ induces the homeomorphism between the subpairs 
$$\Phi : ({\cal E}(M, N), {\cal E}^c(M, N)) \cong 
(s^\Lambda, s^\Lambda_f) \times ({\cal F}(M, N), {\cal F}^c(M, N)), \ \ 
\text{where ${\cal F}(M, N) = {\cal E}(M, N) \cap {\cal F}$}.$$
\end{example}

In the case where $M = N$, we can take $\id_M$ as a base point $f_0$ of the space ${\cal E}$. 
Then the support function ${\rm supp}_{f_0}$  
reduces to the ordinary support function. 
The space ${\cal E}$ includes the group ${\cal H}(M)$   
as a ${\cal G}$-invariant subspace. 
From Proposition~\ref{prop_homeo_stability_mu}\,(2) we have the following criterion. 

\begin{prop}\label{prop_homeo-gr_stability} Suppose $({\cal L}, {\cal K})$ is a pair of subgroups of ${\cal H}(M)$ with 
$({\cal G}, {\cal G}^c) \subset ({\cal L}, {\cal K}) \subset ({\cal H}(M), {\cal H}^c(M))$.
Then the map $\Phi$ induces the homeomorphism 
$$\Phi : ({\cal L}, {\cal K}) \cong 
(s^\Lambda, s^\Lambda_f) \times ({\cal L} \cap {\cal F}, {\cal K} \cap {\cal F}).$$ 
Thus, the pair $({\cal L}, {\cal K})$ is $(s^\Lambda, s^\Lambda_f)$-stable. 
\end{prop}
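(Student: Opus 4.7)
The plan is to apply Proposition~\ref{prop_homeo_stability_mu}\,(2)(i) to the subpair $({\cal E}_1, {\cal E}_2) = ({\cal L}, {\cal K})$ of $({\cal E}, {\cal E}^c)$ (specialized to the case $N = M$ with base point $f_0 = \id_M$), so that the space ${\cal E}$ now contains ${\cal H}(M)$ as a ${\cal G}$-invariant subspace and ${\rm supp}_{f_0}$ coincides with the usual support. The hypothesis that $({\cal L}, {\cal K})$ is a pair means ${\cal K} \subset {\cal L}$, and combined with ${\cal K} \subset {\cal H}^c(M)$ it gives ${\cal K} \subset {\cal L} \cap {\cal H}^c(M) = {\cal L}^c \subset {\cal E}^c$, so $({\cal L}, {\cal K})$ is genuinely a subpair of $({\cal E}, {\cal E}^c)$.

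Next I would verify the two invariance conditions required by Proposition~\ref{prop_homeo_stability_mu}\,(2)(i). Since the action of ${\cal G}$ on ${\cal E}$ is by right composition and ${\cal G} \subset {\cal L}$, for any $h \in {\cal L}$ and $g \in {\cal G}$ we have $h \circ g \in {\cal L}$ because ${\cal L}$ is a subgroup of ${\cal H}(M)$; hence ${\cal L}$ is ${\cal G}$-invariant. Similarly, from ${\cal G}^c \subset {\cal K}$ and the fact that ${\cal K}$ is a subgroup, ${\cal K} \cdot {\cal G}^c \subset {\cal K} \cdot {\cal K} \subset {\cal K}$, giving the ${\cal G}^c$-invariance of ${\cal K}$.

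Applying Proposition~\ref{prop_homeo_stability_mu}\,(2)(i) then produces the desired homeomorphism
\[
\Phi : ({\cal L}, {\cal K}) \cong (s^\Lambda, s^\Lambda_f) \times ({\cal L} \cap {\cal F}, {\cal K} \cap {\cal F}).
\]
For the stability assertion, I would invoke the self-stability of the model pair established inside the proof of Lemma~\ref{lem_diff_stability}, namely $(s^\Lambda, s^\Lambda_f)^2 \cong (s^\Lambda, s^\Lambda_f)$; multiplying both sides of the above homeomorphism by $(s^\Lambda, s^\Lambda_f)$ and reassociating yields $({\cal L}, {\cal K}) \times (s^\Lambda, s^\Lambda_f) \cong ({\cal L}, {\cal K})$, which is exactly $(s^\Lambda, s^\Lambda_f)$-stability.

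Since every ingredient (the maps $P$, $G$, $F$, the factorization machinery of Proposition~\ref{prop_stability}, and the self-stability of the model pair) is already in place, there is no real obstacle; the only thing to be careful about is the bookkeeping of which invariance is needed on which factor of the pair, which is handled above. In particular, no further properties of ${\cal L}$ or ${\cal K}$ beyond being subgroups sandwiched between $({\cal G}, {\cal G}^c)$ and $({\cal H}(M), {\cal H}^c(M))$ are used.
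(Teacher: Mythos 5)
Your proof is correct and takes essentially the same route as the paper, which derives this proposition directly from Proposition~\ref{prop_homeo_stability_mu}\,(2) without spelling out the details. Your verification that the subgroup hypotheses ${\cal G} \subset {\cal L}$ and ${\cal G}^c \subset {\cal K}$ give the required ${\cal G}$- and ${\cal G}^c$-invariance, together with the self-stability $(s^\Lambda, s^\Lambda_f)^2 \cong (s^\Lambda, s^\Lambda_f)$, is exactly the bookkeeping the paper leaves implicit.
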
 

The next example includes Theorem~\ref{thm_stability}\,(2). 

\begin{example} 
The pairs $({\cal H}_X(M), {\cal H}^c_X(M))$ and $({\cal H}_X(M)_i, {\cal H}_X^c(M)_i)$ $(i=0,1)$ are $(s^\Lambda, s^\Lambda_f)$-stable for any subset $X$ of $M - \cup_{i\in\Lambda}{\rm Int}\,E_i$.  
Indeed, since $({\cal G}, {\cal G}^c) \subset ({\cal H}_X(M)_1, {\cal H}_X^c(M)_1)$, 
this follows from Proposition~\ref{prop_homeo-gr_stability}. 
\end{example} 

Now we can apply 
the characterization of $(s^\infty, s^\infty_f)$-manifolds (Theorem~\ref{thm_(s,sf)}) based upon the stability (Proposition~\ref{prop_homeo-gr_stability}). 

\begin{proof}[\bf Proof of Theorem~\ref{thm_main}\,(2)] It suffices to prove that the pair $(G_0, H)$ satisfies the conditions (i) $\sim$ (iii) in Theorem~\ref{thm_(s,sf)}\,(1). 
\begin{itemize}
\item[(1)] By \cite[Corollary 1.1]{Ya2} 
the group $G_0$ is a separable completely metrizable ANR. 

\item[(2)]  
(i) By the assumption $H$ is $F_\sigma$ in $G_0$.  
(ii) Since $(G_c)_1^\ast \subset H$ and $(G_c)_1^\ast$ is homotopy dense in $G_0$ 
by \cite[Theorem 3.2]{Ya3} (and its proof), the subgroup $H$ is also homotopy dense in $G_0$. 

\item[(3)]
 From the definition of ${\cal G}$, it is seen that ${\cal G}^c \subset (G_c)_1^\ast$. 
Since $({\cal G}, {\cal G}^c) \subset (G_0, H) \subset ({\cal H}(M), {\cal H}^c(M))$, 
the $(s^\infty, s^\infty_f)$-stability of $(G_0, H)$ follows from Proposition~\ref{prop_homeo-gr_stability}.  
\end{itemize}
This completes the proof.  
\end{proof}

Finally we deduce the $(s^\infty, s^\infty_f)$-stability of groups of measure-preserving homeomorphisms.  
The next example includes Theorem~\ref{thm_stability}\,(3). 

\begin{example} 
Suppose $X$ is a subset of $M - \cup_{i\in\Lambda}{\rm Int}\,E_i$ and $\mu \in {\cal M}_g^{\cup_i(A_i \cup \partial E_i)}(M)$. 
We assume that $n_i \geq 2$ and $\mu_i = \mu|_{E_i}$ for each $i\in\Lambda$. 
Then the following pairs are $(s^\Lambda, s^\Lambda_f)$-stable: 
$$\mbox{
$({\cal H}_X(M, \mu), {\cal H}^c_X(M, \mu))$, \ $({\cal H}_X(M, \mu)_i, {\cal H}_X^c(M, \mu)_i)$ \ $(i=0,1)$ 
\ \ and \ \ 
$({\cal H}_X(M, \mu)_1, {\cal H}_X^c(M, \mu)_1^\ast)$. 
}$$ 
This follows from Proposition~\ref{prop_homeo-gr_stability}, since $({\cal G}, {\cal G}^c) \subset ({\cal H}_X(M, \mu)_1, {\cal H}_X^c(M, \mu)_1^\ast)$. 
\end{example} 

% Reference %  %%%%%%%%%%%%

\end{document}